\DeclareMathOperator{\Id}{Id}
\DeclareMathOperator{\im}{im}
\DeclareMathOperator{\tr}{tr}
\DeclareMathOperator{\dvol}{dvol}
\DeclareMathOperator{\supp}{supp}
\DeclareMathOperator{\Ric}{Ric}
\DeclareMathOperator{\End}{End}
\DeclareMathOperator{\Sym}{Sym}
\newcommand{\cd}{\widetilde{d}}
\newcommand{\cg}{\widetilde{g}}
\newcommand{\cf}{\widetilde{f}}
\newcommand{\cu}{\widetilde{u}}
\newcommand{\cv}{\widetilde{v}}
\newcommand{\cw}{\widetilde{w}}
\newcommand{\cz}{\widetilde{z}}
\newcommand{\cB}{\widetilde{B}}
\newcommand{\cC}{\widetilde{C}}
\newcommand{\cD}{\widetilde{D}}
\newcommand{\cF}{\widetilde{F}}
\newcommand{\cG}{\widetilde{G}}
\newcommand{\cH}{\widetilde{H}}
\newcommand{\cK}{\widetilde{K}}
\newcommand{\cL}{\widetilde{L}}
\newcommand{\cphi}{\widetilde{\phi}}
\newcommand{\cnabla}{\widetilde{\nabla}}
\newcommand{\cdelta}{\widetilde{\delta}}
\newcommand{\cDelta}{\widetilde{\Delta}}
\newcommand{\cmE}{\widetilde{\mathcal{E}}}
\newcommand{\cmG}{\widetilde{\mathcal{G}}}
\newcommand{\hg}{\widehat{g}}
\newcommand{\lp}{\langle}
\newcommand{\rp}{\rangle}
\newcommand{\lv}{\lvert}
\newcommand{\rv}{\rvert}
\newcommand{\mE}{\mathcal{E}}
\newcommand{\mG}{\mathcal{G}}
\newcommand{\mR}{\mathcal{R}}
\newcommand{\mS}{\mathcal{S}}
\newcommand{\kC}{\mathfrak{C}}
\newcommand{\kD}{\mathfrak{D}}
\newcommand{\kM}{\mathfrak{M}}
\newcommand{\bN}{\mathbb{N}}
\newcommand{\bR}{\mathbb{R}}
\newcommand{\Lin}{\mathscr{L}}
\newcommand{\Poly}{\mathscr{P}}
\newcommand{\FSA}{\mathscr{F}}
\def\sideremark#1{\ifvmode\leavevmode\fi\vadjust{\vbox to0pt{\vss
 \hbox to 0pt{\hskip\hsize\hskip1em
 \vbox{\hsize3cm\tiny\raggedright\pretolerance10000
 \noindent #1\hfill}\hss}\vbox to8pt{\vfil}\vss}}}
\newcommand{\suchthat}{\mathrel{}\middle|\mathrel{}}
\newcommand{\comment}[1]{}
\newtheorem{thm}{Theorem}[section]
\newtheorem{prop}[thm]{Proposition}
\newtheorem{lem}[thm]{Lemma}
\newtheorem{cor}[thm]{Corollary}
\theoremstyle{definition}
\newtheorem{defn}[thm]{Definition}
\newtheorem{conj}[thm]{Conjecture}
\theoremstyle{remark}
\newtheorem{remark}[thm]{Remark}
\numberwithin{equation}{section}
\begin{document}

\title[Conformally covariant polydifferential operators]{Some constructions of formally self-adjoint conformally covariant polydifferential operators}
\author{Jeffrey S. Case}
\address{109 McAllister Building \\ Penn State University \\ University Park, PA 16802 \\ USA}
\email{jscase@psu.edu}
 \author{Yueh-Ju Lin}
 \address{Department of Mathematics, Statistics, and Physics \\ Wichita State University \\ Wichita, KS 67260 \\ USA}
 \email{lin@math.wichita.edu}
 \author{Wei Yuan}
 \address{Department of Mathematics \\ Sun Yat-sen University \\ Guangzhou, Guangdong
 510275 \\ China}
 \email{yuanw9@mail.sysu.edu.cn}
% \date{\today}
 \keywords{conformally variational invariant, conformally covariant operator}
\subjclass[2010]{Primary 53A30; Secondary 58J70}
\begin{abstract}
 We introduce the notion of formally self-adjoint conformally covariant polydifferential operators and give some constructions of families of such operators.  In one direction, we show that any homogeneous conformally variational scalar Riemannian invariant (CVI) induces one of these operators.  In another direction, we use the ambient metric to give alternative constructions of certain operators produced this way, one of which is a formally self-adjoint, fourth-order, conformally covariant tridifferential operator which should be regarded as the simplest fully nonlinear analogue of the Paneitz operator.
\end{abstract}
\maketitle

\section{Introduction}
\label{sec:intro}

The GJMS operators $L_{2k}$~\cite{GJMS1992} are a family of conformally covariant differential operators with leading-order term $(-\Delta)^k$ defined on Riemannian manifolds of dimension $n\geq2k$.  The conformal Laplacian $L_2$ and the Paneitz operator $L_4$ are special cases of particular interest.  Due to their conformal invariance, these operators appear in many related contexts, including the Yamabe Problem (e.g.\ \cite{LeeParker1987}), the problem of prescribing Branson's $Q$-curvatures (e.g.\ \cite{ChangYang1995,GurskyHangLin2016}), the determination of the sharp constant in the Sobolev inequality controlling the continuous embedding $W^{k,2}(S^n)\subset L^{\frac{2n}{n-2k}}(S^n)$, $n>2k$ (e.g.\ \cite{Beckner1993,Branson1995}), and as intertwining operators in the spherical principal series representations for $SO_0(n+1,1)$ (e.g.\ \cite{Branson1995}).

Branson's $Q$-curvatures~\cite{Branson1995} are, roughly speaking, the zeroth-order terms of the GJMS operators.  Another interesting family of scalar Riemannian invariants are the renormalized volume coefficients $v_k$~\cite{Graham2000}.  When $k\leq2$ or the underlying manifold is locally conformally flat, these reduce to the $\sigma_k$-curvatures~\cite{Graham2009,Viaclovsky2000}.  The interest in the renormalized volume coefficients stems from the fact that they are variational within conformal classes and well-behaved at Einstein metrics~\cite{ChangFang2008,ChangFangGraham2012,Graham2009}, give rise to a family of sharp fully nonlinear Sobolev inequalities~\cite{GuanWang2004}, and, at least when they reduce to the $\sigma_k$-curvatures, the associated Yamabe-type problem can be solved~\cite{GuanWang2004,ShengTrudingerWang2007}.  For our purposes, the primary difference between the $Q$-curvatures and the renormalized volume coefficients is that their conformal transformations are controlled by linear and fully nonlinear operators, respectively.

In our previous work~\cite{CaseLinYuan2016}, we showed that many similarities between the approaches to prescribing the $Q$-curvatures and the $\sigma_k$-curvatures, such as local stability and rigidity results, are general features of conformally variational invariants (CVIs); see also~\cite{GoverOrsted2010} for a general treatment of Kazdan--Warner obstructions.  A \emph{CVI} is a natural Riemannian scalar invariant $L$ --- meaning it is given by a formula which is a linear combination of complete contractions of tensor products of the Riemannian metric, its inverse, the Riemann curvature tensor, and its covariant derivatives --- which is homogeneous in the metric and variational within conformal classes\footnote{These are really the \emph{even} invariants.  We restrict our attention to the even invariants for simplicity, but expect our arguments extend to all invariants.}.  Homogeneity means that there is a $w\in\bR$, called the \emph{weight}, such that $L^{c^2g}=c^wL^g$ for all metrics $g$ and all positive constants $c$; if $L$ is not identically zero, then $w\in-2\bN_0$.  A natural Riemannian scalar invariant $L$ is \emph{variational} within conformal classes if there is a Riemannian functional $\mS$, in the sense of Definition~\ref{defn:cvi} below, such that
\[ \left.\frac{d}{dt}\right|_{t=0}\mS\left(e^{2t\Upsilon}g\right) = \int_M \Upsilon L^g\,\dvol_g \]
for all Riemannian manifolds $(M,g)$ and all $\Upsilon\in C_0^\infty(M)$.  Equivalently, $L$ is variational if the conformal linearization $\left.\frac{\partial}{\partial t}\right|_{t=0}L^{e^{2t\Upsilon}g}$ is formally self-adjoint for all metrics $g$~\cite{BransonGover2008}.

The primary purpose of this article is to show that given any CVI $L$, there is a formally self-adjoint conformally covariant polydifferential operator $D$ which is naturally associated to $L$, in the sense that the formula for $L^{e^{2\Upsilon}g}$ can be written entirely in terms of $D$ and related objects.  This relation is captured precisely in~\eqref{eqn:pre_branson_trick} and~\eqref{eqn:branson_trick} below, and a precise existence statement is given in Theorem~\ref{thm:existence}.  Here a \emph{polydifferential operator} is a multilinear operator on smooth functions with the property that fixing all but one input yields a differential operator of finite order.  In fact, if $L$ has weight $-2k$, then this restriction will have order at most $2k$.

This work is motivated by the recent discovery of some useful formally self-adjoint conformally covariant polydifferential operators:

In conformal geometry, Case and Wang~\cite{CaseWang2016s} constructed such operators associated to the $\sigma_k$-curvatures in the variational cases and used them to study the Dirichlet problem on manifolds with boundary.  For example, the formally self-adjoint conformally covariant tridifferential operator associated to the $\sigma_2$-curvature is the polarization of
\begin{multline}
 \label{eqn:sigma2_operator}
 L_4(u) := \frac{1}{2}\delta\left(\lv\nabla u\rv^2\,du\right) - \frac{n-4}{16}\left( u\Delta\lv\nabla u\rv^2 - \delta\left((\Delta u^2)\,du\right) \right) \\
  - \frac{1}{2}\left(\frac{n-4}{4}\right)^2u\delta\left(T_1(\nabla u^2)\right) + \left(\frac{n-4}{4}\right)^2\sigma_2u^3 ;
\end{multline}
see~\cite[Remark~2.2]{Case2019fl}.  By exploiting the multilinearity of these operators, Case and Wang solved a Dirichlet problem on manifolds with boundary under a natural positivity assumption~\cite{CaseWang2016s} and made partial progress on the conjectured fully nonlinear sharp Sobolev trace inequality related to the $\sigma_2$-curvature~\cite{CaseWang2019}.  Case also used the multilinearity of $L_2$ to give a new proof~\cite{Case2019fl} of a sharp fully nonlinear Sobolev inequality involving $\sigma_2$-curvature (cf.\ \cite{ChangGurskyYang2003b,GuanWang2004,LiLi2003,Viaclovsky2000}).

In CR geometry, the $Q^\prime$-curvature~\cite{CaseYang2012,Hirachi2013} is a local pseudohermitian invariant of order $2n$ defined on pseudo-Einstein $(2n-1)$-dimensional manifolds which transforms quadratically under change of contact form:
\begin{equation}
 \label{eqn:Qprime}
 e^{n\Upsilon}Q_{e^\Upsilon\theta}^\prime = Q_\theta^\prime + P_\theta^\prime(\Upsilon) + \frac{1}{2}P_\theta(\Upsilon^2) .
\end{equation}
Its importance stems from the fact that the total $Q^\prime$-curvature has properties analogous to the total $Q$-curvature of an even-dimensional Riemannian manifold.  In the terminology to be introduced below, the polarization of $\Upsilon\mapsto P_\theta(\Upsilon^2)$ is the CR covariant bidifferential operator associated to $Q^\prime$.  The limiting case of~\eqref{eqn:branson_trick} below shows that, in the critical dimension, a formula similar to~\eqref{eqn:Qprime} holds between all CVIs and the conformally covariant operators associated to them.

There are some other constructions of conformally covariant polydifferential operators in the literature, especially in the conformally flat category.  In dimension two, one has the well-known (tridifferential) Schwarzian derivative and the (bidifferential) Rankin--Cohen bracket~\cite{Cohen1975,Rankin1956}.  More generally, for all but finitely many possible choices of weights, Ovsienko and Redou~\cite{OvsienkoRedou2003} classified conformally covariant bidifferential operators on round spheres of any dimension.  The restriction on the weights was removed by Clerc~\cite{Clerc2016,Clerc2017} as part of his detailed study of conformally covariant trilinear forms.  We are not aware of constructions of conformally covariant polydifferential operators of higher rank (i.e.\ which are not linear) on general manifolds in the literature.  As discussed further below, we show that the subfamily of formally self-adjoint operators in the Ovsienko--Redou classification exist on all Riemannian manifolds of sufficiently large dimension.

We now state our results more explicitly.  In order to have more economical terminology, we introduce a few definitions. 

\begin{defn}
 \label{defn:natural}
 Let $\ell\in\bN_0$.  A \emph{natural $\ell$-differential operator} is an operator $D\colon \bigl(C^\infty(M)\bigr)^\ell\to C^\infty(M)$ defined on any Riemannian manifold $(M^n,g)$ of sufficiently large dimension such that $D(u_1,\dotsc,u_\ell)$ can be expressed as a linear combination of complete contractions of tensor products of covariant derivatives of the functions $u_j$, $1\leq j\leq\ell$, the Riemannian metric, its inverse, the Riemann curvature tensor and its covariant derivatives.
 
 A \emph{natural polydifferential operator} is an operator which is a natural $\ell$-differential operator for some $\ell\in\bN$.
\end{defn}

Note that a natural $0$-differential operator is a natural scalar Riemannian invariant.  Fixing all but one of the inputs of a natural polydifferential operator yields a (linear) differential operator.

In what follows, given a natural polydifferential operator $D$, we shall always assume that we restrict to Riemannian manifolds of a dimension on which $D$ is defined.  In practice, we will study natural polydifferential operators which are homogeneous of degree $-2k$ in $g$ and defined on all Riemannian manifolds of dimension $n\geq2k$.

\begin{defn}
 \label{defn:conformally_covariant}
 A natural $\ell$-differential operator $D$ is \emph{conformally covariant} if for any $n\in\bN$, there are constants $a,b\in\bR$ such that
 \begin{equation}
  \label{eqn:conformally_covariant}
  D^{e^{2\Upsilon}g}(u_1,\dotsc,u_\ell) = e^{-b\Upsilon} D^g(e^{a\Upsilon}u_1,\dotsc,e^{a\Upsilon}u_\ell)
 \end{equation}
 for all Riemannian manifolds $(M^n,g)$ and all $\Upsilon,u_1,\dotsc,u_\ell\in C^\infty(M)$.  We call $(a,b)$ the \emph{bidegree} of $D$.
\end{defn}

In particular, conformally covariant operators are homogeneous in the metric.

\begin{defn}
 \label{defn:formally_self-adjoint}
 A natural $\ell$-differential operator $D$ is \emph{formally self-adjoint} if for every Riemannian manifold $(M^n,g)$ and every $u_0,\dotsc,u_\ell\in C^\infty(M)$ such that $u_0\dotsm u_\ell$ has compact support, the map
 \[ (u_0,\dotsc,u_\ell) \mapsto \int_M u_0\,D(u_1,\dotsc,u_\ell)\,\dvol_g \]
 is symmetric.
\end{defn}

Note that if $D$ is formally self-adjoint, then it is necessarily symmetric; i.e.\ the map $(u_1, \dotsc, u_\ell) \mapsto D(u_1, \dotsc, u_\ell)$ is symmetric.

The bidegree of a formally self-adjoint conformally covariant operator is determined entirely by its degree of homogeneity; see Lemma~\ref{lem:find_weights}.

\begin{defn}
 \label{defn:recovers}
 Let $L$ be a CVI of weight $-2k$.  A natural $\ell$-differential operator $D$ \emph{recovers $L$} if
 \begin{enumerate}
  \item for all Riemannian manifolds of dimension $n>2k$,
  \begin{equation}
   \label{eqn:recover_noncritical}
   D(1,\dotsc,1) = \left(\frac{n-2k}{\ell+1}\right)^\ell L;
  \end{equation}
  \item for all Riemannian manifolds $(M,g)$ of dimension $n=2k$,
  \begin{equation}
   \label{eqn:recover_critical}
   \left.\frac{1}{\ell!}\frac{\partial^\ell}{\partial t^\ell}\right|_{t=0} e^{nt\Upsilon}L^{e^{2t\Upsilon}g} = D(\Upsilon,\dotsc,\Upsilon)
  \end{equation}
  for all $\Upsilon\in C^\infty(M)$.
 \end{enumerate}
\end{defn}

Putting these definitions together yields the operators we study in this article.

\begin{defn}
 \label{defn:associated}
 Let $L$ be a CVI.  A natural $\ell$-differential operator is \emph{associated to $L$} if it is conformally covariant, formally self-adjoint and recovers $L$.
\end{defn}

For operators $D$ which recover a CVI $L$, the assumptions of Definition~\ref{defn:recovers} in noncritical dimensions $n>2k$ and critical dimension $n=2k$ are closely related.  On the one hand, the bidegree is determined by $L$, and hence~\eqref{eqn:recover_noncritical} is equivalent to
\begin{equation}
 \label{eqn:pre_branson_trick}
 \left(\frac{n-2k}{\ell+1}\right)^\ell e^{\frac{n\ell+2k}{\ell+1}t\Upsilon}L^{e^{2t\Upsilon}g} = D\left(e^{\frac{n-2k}{\ell+1}t\Upsilon},\dotsc,e^{\frac{n-2k}{\ell+1}t\Upsilon}\right)
\end{equation}
for all smooth functions $\Upsilon$ and all $t\in\bR$.  Taking $j$, $1\leq j\leq\ell$, derivatives in $t$ and evaluating at zero implies that there is a $j$-differential operator $L_j^{\ell+1}$ such that
\[ D(u_1,\dotsc,u_j,1,\dotsc,1) = \frac{(\ell-j)!}{\ell!}\left(\frac{n-2k}{\ell+1}\right)^{\ell-j} L_j^{\ell+1}(u_1,\dotsc,u_j) \]
for all smooth functions $u_1,\dotsc,u_j$; we have normalized this so that $L_0^{\ell+1}=L$.  By treating $\frac{n-2k}{\ell+1}$ as a formal variable in~\eqref{eqn:pre_branson_trick}, we deduce that
\begin{equation}
 \label{eqn:branson_trick}
 e^{\frac{n\ell+2k}{\ell+1}\Upsilon}L^{e^{2\Upsilon g}} = \sum_{j=0}^\ell \frac{1}{j!}L_j^{\ell+1}(\Upsilon,\dotsc,\Upsilon) + O\left(\frac{n-2k}{\ell+1}\right) .
\end{equation}
Taking the limit $n\to 2k$ in~\eqref{eqn:branson_trick} yields a relationship analogous to~\eqref{eqn:Qprime}; differentiating the result of this limit yields~\eqref{eqn:recover_critical}.  That is, we can regard~\eqref{eqn:recover_critical} as a special case of~\eqref{eqn:recover_noncritical} via Branson's method of analytic continuation in the dimension~\cite{Branson1995}.  See Section~\ref{sec:defn} for an alternative explanation of the relationship between~\eqref{eqn:recover_noncritical} and~\eqref{eqn:recover_critical}.

Our main result is that given any CVI of weight $-2k$, one can find an associated $j$-differential operator for some integer $0\leq j\leq 2k-1$.  Indeed:

\begin{thm}
 \label{thm:existence}
 Let $L$ be a CVI of weight $-2k$.  There is an integer $1\leq j\leq 2k$ such that
 \begin{enumerate}
  \item there is a $(j-1)$-differential operator associated to $L$; and
  \item for any $1\leq\ell<j-1$, there does not exist an $\ell$-differential operator associated to $L$.
 \end{enumerate}
\end{thm}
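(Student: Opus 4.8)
The plan is to reduce the theorem to a single existence statement and then settle the minimality abstractly. Concretely, I claim it suffices to produce, for any CVI $L$ of weight $-2k$, a formally self-adjoint, conformally covariant $(2k-1)$-differential operator that recovers $L$. Granting this, let $S\subseteq\bN_0$ be the set of $\ell$ for which an $\ell$-differential operator associated to $L$ exists. The existence statement gives $2k-1\in S$, so $S\neq\emptyset$; set $m=\min S$ and $j=m+1$. Then $0\le m\le 2k-1$ forces $1\le j\le 2k$, assertion~(1) is the statement $m\in S$, and assertion~(2) holds because no $\ell<m$ lies in $S$ by minimality. Thus all the content is in the boxed existence claim, which is the heart of the paper.

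To build the operator I would use Branson's analytic continuation in the dimension. Since $L$ has weight $-2k$, one has $e^{2k\Upsilon}L^{e^{2\Upsilon}g}=L$ whenever $\Upsilon$ is constant, so $e^{2k\Upsilon}L^{e^{2\Upsilon}g}=L+(\text{terms carrying at least one covariant derivative of }\Upsilon)$, a finite sum of complete contractions of curvature, $g^{-1}$, and covariant derivatives of $\Upsilon$, with coefficients polynomial in $n$. Writing $u=e^{a\Upsilon}$ with $a=\frac{n-2k}{2k}$, the normalization \eqref{eqn:pre_branson_trick} suggests defining $D$ through $D(u,\dotsc,u)=a^{2k-1}e^{b\Upsilon}L^{e^{2\Upsilon}g}=a^{2k-1}u^{2k-1}\bigl(e^{2k\Upsilon}L^{e^{2\Upsilon}g}\bigr)$, where $b=\frac{n(2k-1)+2k}{2k}$. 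Rewriting each $\nabla^{(p)}\Upsilon$ in terms of $u$ introduces negative powers $u^{-q}$ and scalar factors $a^{-q}$ from the pieces of degree $q$ in the jets of $\Upsilon$; the prefactor $a^{2k-1}u^{2k-1}$ cancels these for $q\le 2k-1$, so that $D$ is a genuine natural $(2k-1)$-differential operator with coefficients polynomial in $n$ for which \eqref{eqn:pre_branson_trick} holds identically in $n$. Conformal covariance \eqref{eqn:conformally_covariant} then follows from \eqref{eqn:pre_branson_trick} by polarization together with the cocycle property of two successive conformal rescalings, with the bidegree forced by Lemma~\ref{lem:find_weights}; the noncritical recovery \eqref{eqn:recover_noncritical} is immediate, and the critical recovery \eqref{eqn:recover_critical} is the $n\to 2k$ limit via \eqref{eqn:branson_trick}.

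Two substantive points remain. First, the extremal degree $q=2k$, a contraction of $2k$ gradients $\nabla\Upsilon$ against $g^{-1}$ with no curvature, would leave an uncancelled power $a^{-1}u^{-1}$; one must check that its universal coefficient always carries the factor $(n-2k)=2k\,a$ (as in the model case $L=R$, where this term is $-(n-1)(n-2)|\nabla\Upsilon|^2$). This both completes the construction and shows the degree-$2k$ term drops out at $n=2k$, so $D$ has rank $2k-1$ and $\Phi(t):=e^{nt\Upsilon}L^{e^{2t\Upsilon}g}$ is polynomial in $t$ of degree $\le 2k-1$ in the critical dimension. Second, and this is where $L$ being \emph{variational} (not merely natural) is essential, I establish formal self-adjointness. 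Let $\mS$ be a conformal primitive of $L$ and set $\mathcal{G}(\Upsilon)=\mS(e^{2\Upsilon}g)$, so that $\mathcal{G}'(\Upsilon)[\phi]=\int_M \phi\,e^{n\Upsilon}L^{e^{2\Upsilon}g}\,\dvol_g$. Expanding the integrand via \eqref{eqn:branson_trick} and differentiating $\mathcal{G}$ a total of $2k$ times in directions $\phi_0,\dotsc,\phi_{2k-1}$ at $\Upsilon=0$, the part multilinear in $\phi_1,\dotsc,\phi_{2k-1}$ isolates $\int_M \phi_0\,D(\phi_1,\dotsc,\phi_{2k-1})\,\dvol_g$ up to terms built from the lower operators $L_i^{\ell+1}$; since the mixed directional derivatives of $\mathcal{G}$ are symmetric in all $2k$ slots, an induction on the rank peels off these corrections and yields the full symmetry of $(u_0,\dotsc,u_{2k-1})\mapsto\int_M u_0\,D(u_1,\dotsc,u_{2k-1})\,\dvol_g$. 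As the variational identity holds in every dimension—equivalently, self-adjointness is a polynomial identity in $n$, which it suffices to verify for infinitely many $n$—the operator is formally self-adjoint in the sense of Definition~\ref{defn:formally_self-adjoint}.

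The main obstacle is the self-adjointness step: conformal covariance and recovery are essentially formal consequences of \eqref{eqn:pre_branson_trick}, whereas extracting the full symmetry of $D$ from the functional $\mathcal{G}$, while correctly accounting for the lower-rank corrections $L_i^{\ell+1}$ and keeping the argument uniform in $n$, is the delicate part and is the place where the CVI hypothesis is genuinely used. The key supporting technicality is the verification that the degree-$2k$ term in $e^{2k\Upsilon}L^{e^{2\Upsilon}g}$ always carries the factor $(n-2k)$; this is simultaneously what makes the construction produce an honest polydifferential operator and what pins the rank at $2k-1$, hence gives the bound $j\le 2k$. Once both are in hand, the minimality reduction of the first paragraph completes the proof.
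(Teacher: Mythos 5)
Your overall architecture (reduce to producing a single $(2k-1)$-differential operator associated to $L$, then take $j-1=\min S$) is fine, and your substitution $u=e^{a\Upsilon}$ is a legitimate Branson-style alternative to the paper's recursive definition of the polarizations $L_j^{\ell}$. But there is a genuine gap at the exact point where the theorem's content lives. First, your accounting of the obstruction is incorrect: a term of degree $q$ in the jets of $\Upsilon$ contributes $a^{-q}$, but under the chain rule $\nabla^{(m)}\Upsilon=a^{-1}\nabla^{(m)}\log u$ produces monomials in up to $m$ jets of $u$, so the power of $u^{-1}$ is governed by the total number of derivatives (up to $2k$), not by $q$. Consequently the prefactor $a^{2k-1}u^{2k-1}$ does \emph{not} clear all denominators for $q\leq 2k-1$: e.g.\ a term $c(n)(\Delta\Upsilon)^k$ in $e^{2k\Upsilon}L^{e^{2\Upsilon}g}$ contributes $(-1)^k c(n)a^{k-1}u^{-1}\lv\nabla u\rv^{2k}$, which survives even though $q=k\leq 2k-1$ and even though no negative power of $a$ appears. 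The condition you actually need is that the \emph{total} coefficient of $u^{-1}\lv\nabla u\rv^{2k}$ --- a specific linear combination, with various powers of $a$, of the coefficients of \emph{all} curvature-free terms in the transformation law --- vanishes identically in $n$. Checking that the coefficient of $\lv\nabla\Upsilon\rv^{2k}$ alone carries a factor of $(n-2k)$ is neither the right condition nor, as you present it ("one must check", plus the single example $L=R$), a proof of anything for a general CVI.

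Second, this unproved vanishing is precisely the heart of the theorem: it is equivalent to the paper's Lemma~\ref{lem:finite_rank} ($L_{2k}^{2k}=0$), which is what yields the bound $j\leq 2k$. The paper proves it by first showing (Proposition~\ref{prop:Lj_sym}) that the degree-$2k$ polarization is a formally self-adjoint natural $2k$-differential operator annihilating constants, and then invoking the weight-counting argument of Lemma~\ref{lem:kernel-pi}(2): such an operator must have the form $\delta\comp A\comp d$ with $A$ natural of weight $-2k+2$ requiring at least $k$ metric contractions, which is impossible. Note the order of operations: formal self-adjointness of the top polarization is needed \emph{before} the operator $D$ can be constructed, whereas in your write-up self-adjointness is established only \emph{after} the construction, so your argument as organized is circular at this point. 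To repair it you would have to run your variational argument (differentiating $\mathcal{G}(\Upsilon)=\mS(e^{2\Upsilon}g)$) directly on the degree-$2k$ Taylor coefficient --- which is a well-defined natural $2k$-differential operator independently of the construction --- and then supply the Lemma~\ref{lem:kernel-pi}-type counting argument. Your self-adjointness sketch for $D$ itself is morally the paper's Proposition~\ref{prop:Lj_sym} and could likely be completed, but the induction "peeling off" the lower-order corrections $L_i^{\ell+1}$ needs to be written out; as it stands it is an assertion, not an argument.
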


We call the integer $j$ of Theorem~\ref{thm:existence} the \emph{rank} of $L$.  Note that if $L$ is a CVI of rank $j$ and $D$ is a $(j-1)$-differential operator associated to $L$, then the \emph{Dirichlet form $\kD$ associated to $D$},
\begin{equation}
 \label{eqn:dirichlet-form}
 \kD(u_1,\dotsc,u_j) := \int_M u_1\,D(u_2,\dotsc,u_j)\,\dvol_g ,
\end{equation}
is a conformally covariant symmetric $j$-linear form; i.e.\ $\kD$ is symmetric in its arguments and
\[ \kD^{e^{2\Upsilon}g}(u_1,\dotsc,u_j) = \kD^g\left( e^{\frac{n-2k}{j}\Upsilon}u_1,\dotsc,e^{\frac{n-2k}{j}\Upsilon}u_j\right) \]
for all $\Upsilon,u_1,u_2,\dotsc,u_j\in C^\infty(M)$ and all metrics $g$ on $M$, where $-2k$ is the weight of $L$.

%There are a number of notable features of Theorem~\ref{thm:existence}:

We prove Theorem~\ref{thm:existence} by developing an algorithm which constructs a canonical formally self-adjoint conformally covariant polydifferential operator associated to a given CVI.  Executing this algorithm starting with the $\sigma_2$-curvature recovers the formula~\eqref{eqn:sigma2_operator} for its associated trilinear operator.  Combined with our previous work~\cite{CaseLinYuan2016} --- which algorithmically establishes a one-to-one correspondence between CVIs of weight $-2k$ and elements of $\mR_{2k}^n/\im\delta$, $n\geq 2k$, where $\mR_{2k}^n$ is the space of natural Riemannian scalar invariants of weight $-2k$ on $n$-dimensional manifolds and $\im\delta$ is the image of the divergence operator --- we obtain an algorithm which constructs all formally self-adjoint conformally covariant polydifferential operators associated to a CVI.

%Theorem~\ref{thm:existence} immediately produces many conformally covariant formally self-adjoint polydifferential operators.  Indeed, given $k\in\bN$ and $n\geq2k$, our previous work~\cite{CaseLinYuan2016} establishes a one-to-one correspondence between CVIs of weight $-2k$ and elements of $\mR_{2k}^n/\im\delta$, where $\mR_{2k}^n$ is the space of natural Riemannian scalar invariants of weight $-2k$ on $n$-dimensional manifolds and $\im\delta$ is the image of the divergence operator.  When combined with our proof of Theorem~\ref{thm:existence}, this gives an algorithm for constructing all formally self-adjoint conformally covariant polydifferential operators associated to a CVI.

The rank of a CVI leads to two interesting observations.  First, the conformal transformation formulae for the Riemann curvature tensor and its covariant derivatives suggest that a generic CVI of weight $-2k$ should have rank $2k+1$.  Instead, Theorem~\ref{thm:existence} implies that every CVI has rank at most $2k$.  Second, the proof of Theorem~\ref{thm:existence} shows that if $L_1$ and $L_2$ are two CVIs of weight $-2k$ and rank $2k$, then there is a nontrivial linear combination of $L_1$ and $L_2$ of rank at most $2k-1$.  As we will see in Section~\ref{sec:weight}, the renormalized volume coefficient $v_k$ has rank $2k$.  Lemma~\ref{lem:kernel-pi} below then implies that, up to the addition of a multiple of $v_k$, every CVI of weight $-2k$ has rank at most $2k-1$; i.e.\ modulo $v_k$, every CVI is not maximally nonlinear.

In critical dimensions, Theorem~\ref{thm:existence} provides a new conformal primitive for any CVI.  Recall that if $L$ is a CVI of weight $-2k$ and $(M^{2k},g)$ is a closed $2k$-dimensional Riemannian manifold, then
\begin{equation}
 \label{eqn:conformal_primitive}
 \mS(e^{2u}g) := \int_0^1 \int_M u_s^\prime L(g_s)\,\dvol_{g_s}\,ds
\end{equation}
is a conformal primitive for $L$, where $g_s:=e^{2u_s}g$ is a one-parameter family of metrics connecting $g_0=g$ to $g_1=e^{2u}g$ and $u_s^\prime:=\frac{\partial}{\partial s}u_s$; typically one takes $u_s=su$ (cf.\ \cite{BrendleViaclovsky2004,CaseLinYuan2016}).  One can check that the definition of $\mS$ is independent of the chosen path, analogous to the Mabuchi functional~\cite{Mabuchi1986} in K\"ahler geometry.  Theorem~\ref{thm:existence} gives a manifestly path independent formula for $\mS$.

\begin{cor}
 \label{cor:critical_primitive}
 Let $L$ be a CVI of weight $-2k$ and let $(M^{2k},g)$ be a closed Riemannian manifold.  Then the functional~\eqref{eqn:conformal_primitive} is equivalently written as
 \[ \mS(e^{2u}g) = \sum_{j=0}^{2k-1} \frac{1}{(j+1)!}\int_M uL_j^j(u,\dotsc,u)\,\dvol , \]
 where $L_j^j$ and $\dvol$ are determined by $g$.
\end{cor}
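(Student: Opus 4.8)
The plan is to establish the stated formula for $\mS(e^{2u}g)$ by computing the path integral in~\eqref{eqn:conformal_primitive} along the linear path $u_s = su$ and then identifying the integrand with the polydifferential operators $L_j^j$ supplied by Theorem~\ref{thm:existence}. First I would invoke Theorem~\ref{thm:existence} to obtain, in the critical dimension $n = 2k$, the operators $L_j^\ell$ associated to $L$; since we are in the critical case, the relevant relation is the $n \to 2k$ limit of~\eqref{eqn:branson_trick}, which expresses $e^{n\Upsilon}L^{e^{2\Upsilon}g}$ in terms of $\sum_{j=0}^{\ell} \frac{1}{j!} L_j^{\ell+1}(\Upsilon,\dotsc,\Upsilon)$. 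Taking the rank $j = \ell+1$ to be maximal (or simply fixing $\ell = 2k-1$ to match the upper limit of the sum in the corollary), this gives the key pointwise identity
\[
 e^{n s u}\, L(g_s) = \sum_{j=0}^{2k-1} \frac{s^j}{j!}\, L_j^j(u,\dotsc,u)
\]
along $g_s = e^{2su}g$, where I have used that the $j$th term collects exactly $j$ factors of $su$, producing the power $s^j$.

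The main computation is then to substitute into~\eqref{eqn:conformal_primitive}. With $u_s = su$ I have $u_s' = u$ and $\dvol_{g_s} = e^{nsu}\,\dvol_g$ in dimension $n = 2k$, so the integrand becomes $u_s' L(g_s)\,\dvol_{g_s} = u\, e^{nsu} L(g_s)\,\dvol_g$. Inserting the displayed identity gives
\[
 \mS(e^{2u}g) = \int_0^1 \int_M u \sum_{j=0}^{2k-1} \frac{s^j}{j!}\, L_j^j(u,\dotsc,u)\,\dvol_g\,ds .
\]
Interchanging the (finite) sum with the integrals and carrying out the elementary $s$-integration, $\int_0^1 s^j\,ds = \frac{1}{j+1}$, yields the coefficient $\frac{1}{(j+1)!}$ in each term and produces exactly the claimed formula. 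The path-independence asserted in the text guarantees that evaluating along $u_s = su$ computes the same functional as any other path, so no loss of generality is incurred by this choice.

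The step I expect to require the most care is the first one: correctly extracting the critical-dimension identity from~\eqref{eqn:branson_trick} and verifying that the factor of $e^{nsu}$ from the volume form is precisely the factor $e^{\frac{n\ell+2k}{\ell+1}\Upsilon}$ appearing on the left of~\eqref{eqn:branson_trick} once one sets $n = 2k$ (so that $\frac{n\ell + 2k}{\ell+1} = \frac{2k\ell + 2k}{\ell+1} = 2k = n$) and takes the $O\!\left(\frac{n-2k}{\ell+1}\right)$ remainder to vanish in the limit. One must also confirm that the operators $L_j^\ell$ appearing in~\eqref{eqn:branson_trick} stabilize correctly as $\ell$ ranges, so that the symbol $L_j^j$ in the corollary is the one obtained by taking $\ell + 1 = j$ — i.e.\ the $j$-linear operator arising as the top-order term in the rank-$j$ expansion. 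Once this bookkeeping is pinned down, the remaining manipulations are the routine substitution and $s$-integration described above, so the essential content of the corollary is really just the observation that the conformal primitive~\eqref{eqn:conformal_primitive}, which a priori requires a choice of path, collapses to a manifestly path-independent polynomial expression in $u$ by virtue of the expansion guaranteed by Theorem~\ref{thm:existence}.
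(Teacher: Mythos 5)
Your proposal is correct and takes essentially the same route as the paper: both rest on the critical-dimension identity $e^{nu_t}L^{e^{2u_t}g} = \sum_{j}\frac{1}{j!}L_j^j(u_t,\dotsc,u_t)$, i.e.\ the case $j=0$, $\ell=2k-1$ of~\eqref{eqn:critical_invariance}, which is available because $L_{2k}^{2k}=0$. The only difference is cosmetic: the paper keeps a general path and uses the formal self-adjointness of $L_j^j$ to recognize the integrand as $\frac{1}{j+1}\frac{\partial}{\partial t}\int_M u_tL_j^j(u_t,\dotsc,u_t)\,\dvol$, whereas you specialize to $u_s=su$ (legitimately, by the asserted path-independence) and evaluate $\int_0^1 s^j\,ds=\frac{1}{j+1}$.
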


The operators $L_j^j$ of Corollary~\ref{cor:critical_primitive} are the same as those appearing in the limit $n\to2k$ of~\eqref{eqn:branson_trick}, with the extra observation that $L_j^{\ell+1}=L_j^j$ in dimension $n=2k$; see Section~\ref{sec:operator}.  Note that Corollary~\ref{cor:critical_primitive} recovers the well-known~\cite{Branson1995} fact that
\[ \mS(e^{2u}g) := \int_{M^{2k}} \left( \frac{1}{2}uL_{2k}u + Q_{2k}u \right)\,\dvol \]
is a conformal primitive for Branson's $Q$-curvature $Q_{2k}$ of order $2k$ on closed Riemannian $2k$-dimensional manifolds, where $L_{2k}$ is the GJMS operator of order $2k$~\cite{GJMS1992}.  Corollary~\ref{cor:critical_primitive} also recovers the conformal primitive for the $\sigma_2$-curvature identified by Chang and Yang~\cite{ChangYang2003} and an alternative to the conformal primitive for the $\sigma_3$-curvature on locally conformally flat manifolds found by Branson and Gover~\cite{BransonGover2008}.

While Theorem~\ref{thm:existence} establishes a correspondence between formally self-adjoint conformally covariant polydifferential operators and CVIs, it is not so simple to explicitly write down a formula, or even an ambient formula, for the operator associated to a given CVI.  For this reason, we give four explicit constructions of formally self-adjoint conformally covariant polydifferential operators.

First, we describe the conclusion of Theorem~\ref{thm:existence} for well-studied CVIs.  It is clear that a CVI has rank $1$ if and only if it is conformally invariant, while the conformal covariance of the GJMS operators~\cite{GJMS1992} implies that Branson's $Q$-curvatures~\cite{Branson1995} all have rank $2$.  For any $k\in\bN$, the $k$-th renormalized volume coefficient, which is a CVI of weight $-2k$ (see~\cite{ChangFang2008,Graham2009}), has rank $2k$.  By using an observation of Case and Wang~\cite{CaseWang2016s}, we give a formula for the operator associated to $v_k$ on any flat manifold; see Section~\ref{sec:weight}.  When $k\leq3$, this expression also gives an ambient formula for the operator associated to $v_k$; see Section~\ref{sec:ambient} for details and a brief discussion of the case $k\geq4$.

Second, we give a two-parameter family of formally self-adjoint conformally covariant tridifferential operators of order four which are homogeneous of degree $-6$ in the metric; that is, all members of our family have the property that the linear operator obtained by fixing all but one of the inputs is fourth-order.  These operators are defined using the ambient metric and lead to the following nice basis for the four-dimensional space of CVIs of weight $-6$ on locally conformally flat manifolds:

\begin{thm}
 \label{thm:rank4}
 Let $(M^n,g)$, $n\not=1,2,4$, be a Riemannian manifold and define
 \begin{align*}
  I_1 & = -\Delta J^2 + \frac{n-6}{3}J^3 + \frac{2(n+2)^2}{n-2}v_3 , \\
  I_2 & = -\Delta\sigma_2 - \delta\left(T_1(\nabla J)\right) + (n-6)J\sigma_2 + \frac{3(n^2+8n-4)}{2(n-2)}v_3 ,
 \end{align*}
 where $P$ is the Schouten tensor, $J=\tr P$ is its trace, $\sigma_2=\frac{1}{2}(J^2-\lv P\rv^2)$ is the $\sigma_2$-curvature, and $T_1:=Jg-P$ is the first Newton tensor.  Then $I_1,I_2$ are CVIs of weight $-6$ and rank $4$.  Moreover, $\{Q_6,I_1,I_2,v_3\}$ forms a basis for the space of CVIs of weight $-6$ on locally conformally flat manifolds.
\end{thm}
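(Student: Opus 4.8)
The plan is to read off $I_1,I_2$ from the two-parameter family of ambient tridifferential operators discussed above, and then to use the rank formalism of Theorem~\ref{thm:existence} together with the correspondence between CVIs and $\mathcal{R}_{2k}^n/\im\delta$ of~\cite{CaseLinYuan2016}. Homogeneity of weight $-6$ is immediate, since $J$, $\sigma_2$, and $v_3$ have weights $-2$, $-4$, $-6$ and every summand of $I_1,I_2$ has total weight $-6$. To see that $I_1,I_2$ are variational, I would identify them (up to the normalization of~\eqref{eqn:recover_noncritical}) with $D_1(1,1,1)$ and $D_2(1,1,1)$ for two members $D_1,D_2$ of the family: translating the ambient formula for a general member into Schouten-tensor data expresses $D(1,1,1)$ as a linear combination of $\Delta J^2$, $J^3$, $v_3$, $\Delta\sigma_2$, $\delta(T_1(\nabla J))$, and $J\sigma_2$, and matching coefficients fixes the two parameters. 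Since each $D_i$ is conformally covariant and formally self-adjoint, setting all inputs to $1$ in~\eqref{eqn:conformally_covariant} and differentiating once in the conformal factor shows that the conformal linearization of $I_i$ splits into a multiplication term and the self-adjoint linear operator $D_i(\,\cdot\,,1,1)$, both self-adjoint; hence $I_i$ is variational and thus a CVI.

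For the rank, realizing $I_i$ as $D_i(1,1,1)$ already exhibits a tridifferential operator associated to $I_i$, so Theorem~\ref{thm:existence} gives rank at most $4$. This step hides the essential cancellation: although $v_3$ alone transforms with degree $5$ in $\Upsilon$, the coefficients in $I_1,I_2$ are precisely those for which~\eqref{eqn:branson_trick} shows $e^{6\Upsilon}I_i^{e^{2\Upsilon}g}$ to be a polynomial of degree exactly $3$ in $\Upsilon$ in the critical dimension $n=6$. For the matching lower bound I would compute the cubic part $\tfrac{1}{6}\partial_t^3|_{t=0}\,e^{6t\Upsilon}I_i^{e^{2t\Upsilon}g}$ using $J^{e^{2\Upsilon}g}=e^{-2\Upsilon}\bigl(J-\Delta\Upsilon-\tfrac{n-2}{2}|\nabla\Upsilon|^2\bigr)$ and the known transformations of $\sigma_2$ and $v_3$, and exhibit a single nonzero trilinear monomial (for instance the coefficient of $\Delta\Upsilon\,|\nabla\Upsilon|^2$). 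By~\eqref{eqn:branson_trick} a nonvanishing cubic part precludes any linear or bidifferential associated operator, so the smallest $\ell$ admitting an associated $\ell$-differential operator is $\ell=3$ and the rank is exactly $4$.

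For the basis statement on locally conformally flat manifolds, I would first use the correspondence of~\cite{CaseLinYuan2016} to identify the weight-$(-6)$ CVIs with the part of $\mathcal{R}_6^n/\im\delta$ surviving in the LCF setting, where all curvature is carried by $P$; a basis of this quotient shows the space is $4$-dimensional, so it suffices to prove $\{Q_6,I_1,I_2,v_3\}$ is linearly independent. Given a relation $\alpha Q_6+\beta I_1+\gamma I_2+\delta v_3=0$, I would extract the degree-$5$ part of $e^{6\Upsilon}(\cdot)^{e^{2\Upsilon}g}$: only $v_3$ contributes (having rank $6$), forcing $\delta=0$. Extracting the degree-$3$ part then kills $Q_6$ (which is affine in $\Upsilon$, being rank $2$) and leaves $\beta C_1+\gamma C_2=0$, where $C_i$ is the cubic part of $I_i$; a direct check that $C_1,C_2$ are not proportional gives $\beta=\gamma=0$, and then $\alpha=0$. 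Hence the four invariants form a basis.

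The main obstacle is the explicit ambient-to-Schouten translation underlying the first two steps: verifying that $D_i(1,1,1)$ equals the stated formulas for $I_1,I_2$, and in particular that the degree-$4$ and degree-$5$ terms in the conformal variation cancel so that only the cubic survives. Both the rank lower bound and the linear independence of $C_1,C_2$ rest on the same trilinear-in-$\Upsilon$ curvature computation, so the efficient route is to read the cubic part directly off the (ambient) symbol of $D_i$ rather than expanding the full conformal transformation of $v_3$; organizing this computation is where the real work lies.
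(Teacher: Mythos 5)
Your outline diverges from the paper's proof at one essential point, and that is where the gap lies: you invoke the formal self-adjointness of the ambient tridifferential operators $D_i$ both to conclude that $I_i$ is variational (by splitting its conformal linearization into a multiplication operator plus the self-adjoint operator $D_i(\,\cdot\,,1,1)$) and to conclude that $D_i$ is ``associated'' to $I_i$ in the sense of Definition~\ref{defn:associated}.  But formal self-adjointness of these operators is precisely what is \emph{not} available at this stage: Remark~\ref{rk:fsa} notes that it is not clear from the ambient construction and is instead \emph{deduced} from Theorem~\ref{thm:rank4} together with Theorem~\ref{thm:rank_existence} and Proposition~\ref{prop:operator_scalars}, and for the closely analogous curved Ovsienko--Redou operators it remains conjectural (Conjecture~\ref{conj:ovsienko_redou}).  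As written, your argument for the CVI property is therefore circular.  The paper sidesteps this entirely by writing $B_0$ and $C_0$, hence $I_1$ and $I_2$, as explicit linear combinations of $L_1=-\Delta J^2+\frac{n-6}{3}J^3$, $L_2=-\Delta\lv P\rv^2-2\delta\left(P(\nabla J)\right)-\Delta J^2+(n-6)J\lv P\rv^2$ and $v_3$, all previously shown to be CVIs in~\cite{CaseLinYuan2016}; some such independent verification is what you need in place of the self-adjointness argument.

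The remainder of your plan is essentially sound and parallels the paper.  The bound rank $\leq 4$ does not in fact require formal self-adjointness: $B$ and $C$ are conformally covariant with bidegree $\bigl(\frac{n-6}{4},\frac{3n+6}{4}\bigr)$ known directly from the ambient construction, and the uniqueness argument in the proof of Proposition~\ref{prop:invariant_operator} then yields $L_4^4=0$ from the normalization $D(1,1,1)=\bigl(\frac{n-6}{4}\bigr)^3 I_i$ alone.  Your lower bound via a nonvanishing cubic term in the critical-dimension expansion is equivalent (through~\eqref{eqn:critical_Ljell_defn_consequence2}) to the paper's check that $L_1^1$, $L_2^2$, $L_3^3$ are all nonzero.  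For the basis statement your degree-in-$\Upsilon$ filtration works but requires verifying that the cubic parts of $I_1$ and $I_2$ are not proportional; the paper's route is shorter, since $(I_1,I_2)$ arises from $(L_1,L_2)$ by a visibly invertible substitution modulo $v_3$, so independence is inherited from the known basis $\{Q_6,L_1,L_2,v_3\}$.
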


Recall that the space of CVIs of weight $-6$ is ten-dimensional, but the subspace of CVIs which vanish on any locally conformally flat manifold is six-dimensional~\cite{CaseLinYuan2016}.  We find the invariant $I_2$ of Theorem~\ref{thm:rank4} particularly interesting as a generalization of the $\sigma_2$- and $Q_4$-curvatures, in the sense that $I_2$ (resp.\ $Q_4$) is the Laplacian of the rank $4$ CVI $\sigma_2$ (resp.\ rank $2$ CVI $\sigma_1$) plus lower order terms.  See Section~\ref{sec:trilinear} for details.

Third, we identify a special subfamily $D_{2k}$ of the conformally covariant bidifferential operators found by Ovsienko and Redou~\cite{OvsienkoRedou2003} which are likely to be formally self-adjoint.  Indeed, we give a new formula for these operators which are tangential and manifestly formally self-adjoint in the ambient space of any Riemannian manifold of dimension $n\geq2k$, thereby finding the curved analogues of this subfamily of the Ovsienko--Redou operators.  We also verify that $D_{2k}$ is formally self-adjoint when $k\leq2$, indicating that $D_{2k}$ should be formally self-adjoint for all $k$.  See Section~\ref{sec:ovsienko_redou} for details.

Fourth, we give an ambient construction of formally self-adjoint conformally covariant $\ell$-differential operators of critical order with $\ell\equiv 3\mod 4$ or, assuming the formal self-adjointness of the curved Ovsienko--Redou operators, $\ell\equiv 5\mod 6$.  These constructions complement Theorem~\ref{thm:existence} by illustrating the wide variety of ranks and orders of CVIs.  Our construction relies in a crucial way on the fact that formally self-adjoint conformally covariant operators act on densities of weight zero in the critical dimension; the generalization to noncritical dimensions involves a messy combinatorial problem which is better considered elsewhere.  See Section~\ref{sec:comments} for details.

This article is organized as follows:  In Section~\ref{sec:bg} we recall some necessary facts about CVIs.  In Section~\ref{sec:defn} we define polydifferential operators and discuss some of their important algebraic properties.  In Section~\ref{sec:operator} we describe our algorithm for constructing operators associated to a CVI and use it to prove Theorem~\ref{thm:existence}.  In Section~\ref{sec:weight} we discuss the examples of conformal invariants, Branson's $Q$-curvatures, and the renormalized volume coefficients.  In Section~\ref{sec:ambient} we give an ambient formula for the operators associated to the $\sigma_1$-, $\sigma_2$-, and $v_3$-curvatures.  In Section~\ref{sec:trilinear} we describe our family of formally self-adjoint conformally covariant tridifferential operators of total order six and prove Theorem~\ref{thm:rank4}.  In Section~\ref{sec:ovsienko_redou} we construct the curved Ovsienko--Redou operators and discuss their formal self-adjointness.  In Section~\ref{sec:comments} we describe formally self-adjoint conformally covariant differential operators in critical dimensions which take a variety of ranks and total orders.
\section{Background}
\label{sec:bg}

Our proof of Theorem~\ref{thm:existence} relies on the infinitesimal characterization of CVIs~\cite{BransonGover2008}.  To explain this we first recall the definition~\cite{CaseLinYuan2016} of a CVI.

\begin{defn}
 \label{defn:cvi}
 A \emph{CVI of weight $-2k$} is a natural Riemannian scalar invariant $L$ which is homogeneous of degree $-2k$ and has the property that there is a Riemannian functional $\mS\colon\kC\to\bR$ such that
 \[ \left.\frac{\partial}{\partial t}\right|_{t=0}\mS(e^{2t\Upsilon}g) = \int_M L_g\Upsilon\,\dvol_g \]
 for all conformal manifolds $(M^n,\kC)$, all metrics $g\in\kC$, and all $\Upsilon\in C^\infty(M)$.
\end{defn}

Recall here that, in terms of Definition~\ref{defn:natural}, a natural Riemannian scalar invariant is a natural $0$-differential operator.  The functional $\mS$ in Definition~\ref{defn:cvi} is a \emph{conformal primitive for $L$}.  We say a functional $\mS$ is \emph{Riemannian} if $\mS(\phi^\ast g)=\mS(g)$ for all Riemannian manifolds $(M,g)$ and all diffeomorphisms $\phi\colon M\to M$.

Denote by $\kM$ the space of Riemannian metrics on a given smooth manifold.  The existence of a conformal primitive can be characterized in terms of the infinitesimal conformal variation of $L$:

\begin{lem}[{\cite[Lemma~2]{BransonGover2008}}]
 \label{lem:cvi_linearization}
 Let $L$ be a natural Riemannian scalar invariant which is homogeneous of degree $-2k$.  Given $g\in\kM$, define $S\colon C^\infty(M)\to C^\infty(M)$ by
 \[ S(\Upsilon) = \left.\frac{\partial}{\partial t}\right|_{t=0}e^{2kt\Upsilon}L(e^{2t\Upsilon}g) . \]
 Then $S(1)=0$.  Moreover, $S$ is formally self-adjoint if and only if $L$ is a CVI.
\end{lem}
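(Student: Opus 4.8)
The plan is to handle the two claims separately: $S(1)=0$ follows immediately from homogeneity, while the self-adjointness criterion amounts to a Poincar\'e-lemma statement for a natural one-form on the space of conformal factors. For the first claim, since $L$ is homogeneous of degree $-2k$, taking $c=e^t$ in $L(c^2g)=c^{-2k}L(g)$ gives $L(e^{2t}g)=e^{-2kt}L(g)$, so that $e^{2kt}L(e^{2t}g)=L(g)$ is constant in $t$ and hence $S(1)=\left.\frac{\partial}{\partial t}\right|_{t=0}L(g)=0$.

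For the equivalence, I would reinterpret the CVI condition variationally. Fix $g$ and parametrize its conformal class by $u\mapsto g_u:=e^{2u}g$ for $u\in C^\infty(M)$; at each $u$ a tangent vector is a function $\Upsilon$, corresponding to the variation $g_u\mapsto e^{2s\Upsilon}g_u$. Define the one-form
\[ \omega_u(\Upsilon)=\int_M \Upsilon\,L_{g_u}\,\dvol_{g_u}, \]
with $\Upsilon$ compactly supported. By Definition~\ref{defn:cvi}, $L$ is a CVI precisely when $\omega$ is exact, $\omega=d\mS$. Since $C^\infty(M)$ is an affine space, the Poincar\'e lemma reduces exactness to closedness $d\omega=0$: given closedness one sets $\mS(g_u)=\int_0^1 \omega_{su}(u)\,ds$, which is path-independent and reproduces the conformal primitive~\eqref{eqn:conformal_primitive}, and naturality of $L$ renders $\mS$ Riemannian.

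The crux is to identify $d\omega$ with the antisymmetric part of $S$. Differentiating $\omega_u$ in a second direction $\Phi$ and using $\dvol_{e^{2s\Phi}g_u}=e^{ns\Phi}\dvol_{g_u}$ together with the identity
\[ \left.\frac{\partial}{\partial s}\right|_{s=0}L(e^{2s\Phi}g_u)=S_{g_u}(\Phi)-2k\Phi\,L_{g_u}, \]
which merely unwinds the definition of $S_{g_u}$ (the operator $S$ built from the base metric $g_u$), I would obtain
\[ D_\Phi\bigl[\omega_u(\Upsilon)\bigr]=\int_M \Upsilon\Bigl(S_{g_u}(\Phi)+(n-2k)\,\Phi\,L_{g_u}\Bigr)\,\dvol_{g_u}. \]
The decisive observation is that the zeroth-order term $(n-2k)\,\Phi\,\Upsilon\,L_{g_u}$ is symmetric in $(\Phi,\Upsilon)$, so it cancels in
\[ d\omega_u(\Phi,\Upsilon)=D_\Phi\bigl[\omega_u(\Upsilon)\bigr]-D_\Upsilon\bigl[\omega_u(\Phi)\bigr]=\int_M\bigl(\Upsilon\,S_{g_u}(\Phi)-\Phi\,S_{g_u}(\Upsilon)\bigr)\,\dvol_{g_u}. \]
Hence $d\omega_u=0$ for all $u$ is equivalent to formal self-adjointness of $S_{g_u}$ for all $u$; since $S$ is required to be self-adjoint on every metric, ranging over all $g$ this is exactly formal self-adjointness of $S$. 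Chaining the equivalences yields $L$ CVI $\iff \omega$ exact $\iff \omega$ closed $\iff S$ formally self-adjoint.

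I expect the main obstacle to be the second-variation bookkeeping in the previous paragraph---correctly accounting for the volume-form factor $e^{ns\Phi}$ and rewriting the naive conformal linearization of $L$ in terms of $S_{g_u}$---so that the only non-self-adjoint contribution is seen to be precisely this symmetric zeroth-order term. Once that cancellation is in hand, both the Poincar\'e-lemma step and the homogeneity computation are routine, and the remaining care is only to confirm that the primitive $\mS$ so produced is diffeomorphism-invariant, which follows from the naturality of $L$.
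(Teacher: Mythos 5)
The paper does not prove this lemma---it is quoted from Branson--Gover---so there is no in-paper argument to compare against; judged on its own terms, your proof is correct and is the standard one. The homogeneity computation for $S(1)=0$ is exactly right, and the reduction of the variational condition to closedness of the natural one-form $\omega_u(\Upsilon)=\int_M\Upsilon L_{g_u}\,\dvol_{g_u}$ on the affine space of conformal factors, followed by the observation that the only non-symmetric contribution to $D_\Phi[\omega_u(\Upsilon)]$ is $\int\Upsilon S_{g_u}(\Phi)\,\dvol_{g_u}$ because the zeroth-order term $(n-2k)\Phi\Upsilon L_{g_u}$ is symmetric, is precisely the mechanism behind the cited result; your bookkeeping of the $e^{ns\Phi}$ volume factor and the rewriting of $\partial_s|_0 L(e^{2s\Phi}g_u)$ as $S_{g_u}(\Phi)-2k\Phi L_{g_u}$ are both correct. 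The one point you pass over quickly is the claim that naturality of $L$ makes the primitive $\mS(g_u)=\int_0^1\omega_{su}(u)\,ds$ a \emph{Riemannian} functional in the sense required by Definition~\ref{defn:cvi}: the homotopy formula depends on a choice of base metric in each conformal class, so diffeomorphism invariance requires checking that the constant of integration can be chosen equivariantly (equivalently, that $\mS_{g_0}(\phi^\ast g_0)=0$ for conformal diffeomorphisms $\phi$). This is the same path-independence issue the paper itself waves at after~\eqref{eqn:conformal_primitive}, and it does not affect the equivalence with formal self-adjointness, but it deserves at least a sentence if the proof is to be written out in full.
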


Our explicit constructions of formally self-adjoint conformally covariant polydifferential operators all require the Fefferman--Graham ambient space~\cite{FeffermanGraham2012}.  Here we recall the definition and some required properties of this space.

Let $(M^n,g)$, $n\geq2$, be a Riemannian manifold.  Let $\mG$ be the trivial $\bR_+$-bundle over $M$ corresponding to the conformal class of $g$; i.e.\ $\mG=\bR_+\times M$ with points $(t,x)\in\mG$ identified with the pair $(x,t^2g_x)$.  Denote by $\pi\colon\mG\to M$ the projection $\pi(t,x)=x$.  The canonical metric on $\mG$ is the metric $\boldsymbol{g}_{(t,x)}(Y,Z)=t^2g_x(\pi_\ast Y,\pi_\ast Z)$.  Note that $\boldsymbol{g}$ is degenerate.  Given $s>0$, we define the dilation $\delta_s\colon\mG\to\mG$ by $\delta_s(t,x)=(st,x)$.  Note that $\delta_s^\ast\boldsymbol{g}=s^2\boldsymbol{g}$.  Denote by $X$ the infinitesimal generator of $\delta_s$.

The \emph{ambient space} is $\cmG:=\mG\times(-1,1)$.  We denote points in $\cmG$ as triples $(t,x,\rho)$ and define $\iota\colon\mG\to\cmG$ by $\iota(t,x)=(t,x,0)$.  We extend $\delta_s$ and $X$ to $\cmG$ in the obvious way.  An \emph{ambient metric} is a Lorentzian metric $\cg$ on $\cmG$ such that
\begin{enumerate}
 \item $\delta_s^\ast\cg=s^2\cg$ for all $s>0$;
 \item $\iota^\ast\cg=\boldsymbol{g}$; and
 \item $\Ric(\cg)=O(\rho^\infty)$ if $n$ is odd; $\Ric(\cg)=O^+(\rho^{n/2-1})$ if $n$ is even.
\end{enumerate}
Here a symmetric $(0,2)$-tensor field $T$ on $\cmG$ belongs to $O^+(\rho^k)$ if $T\in O(\rho^k)$ and for each $(t,x)\in\mG$, it holds that $(\iota^\ast(\rho^{-k}T))(t,x)=\pi^\ast S$ for some symmetric $(0,2)$-tensor at $x$ such that $\tr_{g_x}S=0$.  Fefferman and Graham~\cite{FeffermanGraham2012} proved that ambient metrics exist and are unique modulo diffeomorphism fixing $\iota(\mG)$ and terms of order $O(\rho^\infty)$ if $n$ is odd or order $O^+(\rho^{n/2})$ if $n$ is even.  Indeed, they recursively constructed a formal power series solution to the equation $\Ric(\cg)=0$ to the claimed order.  For later computations we require the following information about the first few terms in the expansion of the ambient metric.

\begin{lem}
 \label{lem:ambient}
 Let $(M^n,g)$, $n\not=1,2,4$, be a Riemannian manifold.  The straight ambient extension is
 \begin{equation}
  \label{eqn:ambient_metric}
  \cg = 2\rho\,dt^2 + 2t\,dt\,d\rho + t^2g_\rho,
 \end{equation}
 where
 \begin{equation}
  \label{eqn:ambient}
  \begin{split}
   g_\rho & = g + 2\rho P + \rho^2\left(P^2-\frac{1}{n-4}B\right) + o(\rho^2) , \\
   \tr_g g_\rho & = n + 2\rho J + \rho^2\lv P\rv^2 - \frac{4}{3(n-4)}\rho^3\lp B,P\rp + O(\rho^4) ,
  \end{split}
 \end{equation}
 where $P$ is the Schouten tensor, $J$ is its trace, and $B=\delta dP + 2W\cdot P$ is the generalized Bach tensor.
\end{lem}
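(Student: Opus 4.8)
The plan is to determine the Taylor coefficients of $g_\rho$ directly from the Fefferman--Graham equations~\cite{FeffermanGraham2012}. In the straight normal form~\eqref{eqn:ambient_metric}, the condition $\Ric(\cg)=0$ is equivalent to a coupled system for the curve of metrics $g_\rho$ on $M$; writing $g_\rho'=\partial_\rho g_\rho$, the tangential components give
\begin{equation*}
 \rho\Bigl((g_\rho'')_{ij} - g_\rho^{kl}(g_\rho')_{ik}(g_\rho')_{jl} + \tfrac12(\tr_{g_\rho}g_\rho')(g_\rho')_{ij}\Bigr) + \tfrac{2-n}{2}(g_\rho')_{ij} - \tfrac12(\tr_{g_\rho}g_\rho')(g_\rho)_{ij} + \Ric(g_\rho)_{ij} = 0 ,
\end{equation*}
while the trace component gives
\begin{equation*}
 \tr_{g_\rho}g_\rho'' = \tfrac12\,g_\rho^{ik}g_\rho^{jl}(g_\rho')_{ij}(g_\rho')_{kl} .
\end{equation*}
I would substitute $g_\rho = g + \rho g^{(1)} + \rho^2 g^{(2)} + \rho^3 g^{(3)} + O(\rho^4)$ and match powers of $\rho$. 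Since $n\neq1,2,4$, all coefficients I need are unobstructed: $g^{(1)}$ and $g^{(2)}$ occur below the critical order $n/2$, and for $n=6$ I will only require $\tr_g g^{(3)}$, which remains determined even though the trace-free part of $g^{(3)}$ is not.

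At order $\rho^0$ the tangential equation reads $\tfrac{2-n}{2}g^{(1)}_{ij} - \tfrac12(\tr_g g^{(1)})g_{ij} + \Ric_{ij} = 0$; tracing gives $\tr_g g^{(1)} = R/(n-1) = 2J$, and back-substitution yields $g^{(1)} = 2P$. At order $\rho^1$, after inserting $g^{(1)}=2P$, the tangential equation collapses to
\begin{equation*}
 (4-n)\,g^{(2)}_{ij} = 4(P^2)_{ij} + (\tr_g g^{(2)})\,g_{ij} - 2\lv P\rv^2 g_{ij} - 2\,\Ric'_{ij} ,
\end{equation*}
where $\Ric'_{ij}$ is the linearisation of the Ricci tensor at $g$ in the direction $P$ and $(P^2)_{ij}=P_{ik}g^{kl}P_{lj}$. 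The trace $\tr_g g^{(2)}$ is supplied cleanly by the $\rho^0$ term of the trace equation, namely $2\tr_g g^{(2)} = \tfrac12\lv g^{(1)}\rv^2 = 2\lv P\rv^2$, so $\tr_g g^{(2)}=\lv P\rv^2$. Substituting this and dividing by $4-n$ (this is the only place $n\neq4$ enters) gives $g^{(2)}$ explicitly, and matching it with the claimed value $P^2-\tfrac{1}{n-4}B$ reduces to the single Riemannian identity $B_{ij} = n(P^2)_{ij} - \lv P\rv^2 g_{ij} - 2\,\Ric'_{ij}$.

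The main obstacle is to verify that this right-hand side is exactly $\delta dP + 2W\cdot P$. This is where I expect the real work: I would expand $\Ric'_{ij}$ as a combination of $\Delta P_{ij}$ and second covariant derivatives of $P$, commute covariant derivatives to trade $\nabla^k\nabla_i P_{jk}$ for $\nabla_i\nabla^k P_{jk}$ plus a $\Rm$-contraction with $P$, apply the contracted second Bianchi identity $\nabla^k P_{ik} = \nabla_i J$ to reorganise the divergence terms into $\delta dP$, and finally decompose $\Rm$ into its Weyl and Schouten parts so that the curvature-times-$P$ terms split into the Weyl contraction $2W\cdot P$ together with algebraic $P^2$, $JP$, and $\lv P\rv^2 g$ terms; these algebraic terms should cancel against $nP^2-\lv P\rv^2 g$, leaving precisely $\delta dP + 2W\cdot P$. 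Incidentally this also explains the trace normalisation: since $W$ is trace-free and $\nabla^i\nabla^k P_{ik} = \Delta J$, one has $\tr_g B = 0$, consistent with $\tr_g g^{(2)} = \lv P\rv^2$.

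Finally, for the trace expansion I would read the coefficient of $\rho$ in the trace equation. Using $g^{(1)}=2P$ and $g^{(2)}=P^2-\tfrac{1}{n-4}B$, this gives
\begin{equation*}
 6\,\tr_g g^{(3)} = 4\lp g^{(1)}, g^{(2)}\rp - \tr\bigl((g^{(1)})^3\bigr) = 8\tr(P^3) - \tfrac{8}{n-4}\lp B,P\rp - 8\tr(P^3) = -\tfrac{8}{n-4}\lp B,P\rp ,
\end{equation*}
whence $\tr_g g^{(3)} = -\tfrac{4}{3(n-4)}\lp B,P\rp$. Assembling the traces yields $\tr_g g_\rho = n + 2\rho J + \rho^2\lv P\rv^2 - \tfrac{4}{3(n-4)}\rho^3\lp B,P\rp + O(\rho^4)$, which together with the formula for $g_\rho$ completes the proof. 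The only genuinely delicate point is the curvature identity of the third paragraph; the remaining steps are bookkeeping in the Fefferman--Graham recursion.
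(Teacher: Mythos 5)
Your derivation is correct in outline and takes a genuinely different route from the paper: the paper's ``proof'' of Lemma~\ref{lem:ambient} is purely a citation, assembling the $\rho$ and $\rho^2$ coefficients from Equations (3.6) and (3.18) of \cite{FeffermanGraham2012} and the trace expansion from the trace-freeness of the obstruction tensor, whereas you rederive everything from the Fefferman--Graham ODE system in normal form. Your bookkeeping checks out: the order-$\rho^0$ and order-$\rho^1$ coefficients of the tangential equation and of the $\rho\rho$ (trace) equation give exactly $g^{(1)}=2P$, $\tr_g g^{(2)}=\lv P\rv^2$, $(4-n)g^{(2)}=4P^2-\lv P\rv^2g-2\Ric'[P]$ (the $JP$ terms do cancel at this order, as your formula implicitly requires), and $6\tr_g g^{(3)}=4\lp g^{(1)},g^{(2)}\rp-\tr\bigl((g^{(1)})^3\bigr)$; and you correctly note that for $n=6$ only $\tr_g g^{(3)}$ is needed and that a possible $\rho^3\log\rho$ term cannot contaminate the trace because its coefficient is trace-free. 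What your approach buys is self-containedness; what it costs is that the entire content of the lemma's normalization of $B$ is compressed into the single identity $B_{ij}=n(P^2)_{ij}-\lv P\rv^2g_{ij}-2\Ric'_{ij}$, which you only assert (``should cancel''). That identity is precisely what the paper sidesteps by quoting (3.18) verbatim, so to complete your argument you must actually expand $\Ric'[P]$, commute derivatives, apply the contracted second Bianchi identity, and split $\Rm$ into its Weyl part and the Kulkarni--Nomizu product of $P$ with $g$ to confirm that the algebraic remainders cancel against $nP^2-\lv P\rv^2 g$ and leave $\delta dP+2W\cdot P$ with the stated coefficient $2$ on the Weyl term. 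This is routine but it is the one place where a sign or factor error would change the formula for $g^{(2)}$, and as written it is the only unproved step in your proposal.
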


Lemma~\ref{lem:ambient} is a compilation of multiple facts from~\cite{FeffermanGraham2012}.  The coefficients of $\rho$ and $\rho^2$ in~\eqref{eqn:ambient} come from~\cite[Equation (3.6)]{FeffermanGraham2012} and~\cite[Equation (3.18)]{FeffermanGraham2012}, respectively.  The order of the next term in the first expansion in~\eqref{eqn:ambient} allows for the possibility that $n=6$, where there is a term at order $\rho^2\log\rho$ corresponding to the obstruction.  The second expansion in~\eqref{eqn:ambient} uses the fact that the obstruction tensor is trace-free~\cite[Theorems 3.8 and 3.10]{FeffermanGraham2012}.

We require two consequences of Lemma~\ref{lem:ambient}.  First, $\cnabla X = \cg$, and in particular
\begin{equation}
 \label{eqn:Xcurv}
 R(X,\cdot,\cdot,\cdot)=0 .
\end{equation}
Second, we have the following local formula for certain ambient operators.

\begin{lem}
 \label{lem:local_ambient_formulas}
 Let $(M^n,g)$, $n\not=1,2,4$, be a Riemannian manifold and let $(\cmG,\cg)$ be its straight ambient extension.  Let $w,\mu\in\bR$, and $u\in C^\infty(M\times(-1,1))$, and $\cu=t^\mu u\in C^\infty(\cmG)$.  Then
 \begin{align}
  \label{eqn:gradient} \lv\cnabla t^w\rv^2 & = 0 , \\
  \label{eqn:divergence} \cdelta\bigl(\cu\,\cd t^w\bigr) & = wt^{w+\mu-2}\left(\partial_\rho u + Ju - \rho\lv P\rv^2u + \rho^2Yu\right) + o(\rho^2), \\
  \label{eqn:Laplacian} \cDelta\cu & = t^{\mu-2}\Bigl[ -2\rho\partial_\rho^2 u + (n+2\mu-2-2\rho J + 2\rho^2\lv P\rv^2)\partial_\rho u + \Delta_\rho u \\
   \notag & \qquad + \mu\left( J - \rho\lv P\rv^2 + \rho^2Y\right) u\Bigr] + o(\rho^2), \\
  \label{eqn:hessian} \cnabla^2t^w & = w(w-1)t^{w-3}dt\otimes\partial_\rho + wt^{w-2}U + o(\rho) .
 \end{align}
 Here $Y$ is the scalar
 \begin{equation}
  \label{eqn:Y} Y = \tr P^3 + \frac{1}{n-4}\lp B,P\rp,
 \end{equation}
 $\Delta_\rho$ is the Laplacian of the metric $g_\rho$,
 \begin{equation}
  \label{eqn:Deltarho-to-Delta}
  \Delta_\rho = \Delta + \rho\left[ \delta(Jg-2P)d - J\Delta \right] + o(\rho) ,
 \end{equation}
 and $U$ is the one-parameter family
 \begin{equation}
  \label{eqn:U}
  U = P - \rho\left(P^2+\frac{1}{n-4}B\right) \in \Gamma\left( T^\ast M \otimes TM\right)
 \end{equation}
 of sections of $T^\ast M\otimes TM$.
\end{lem}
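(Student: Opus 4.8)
The plan is to work in the global coordinates $(t,\rho,x^i)$ on $\cmG$ furnished by the straight ambient extension~\eqref{eqn:ambient_metric}, reducing each of the four identities to the coordinate expressions for the gradient, divergence, Laplacian and Hessian, fed by the expansions of Lemma~\ref{lem:ambient}. First I would record the metric and its inverse: from~\eqref{eqn:ambient_metric} the nonzero components are $\cg_{tt}=2\rho$, $\cg_{t\rho}=t$, $\cg_{ij}=t^2(g_\rho)_{ij}$, while $\cg_{\rho\rho}=0$, and inverting the $(t,\rho)$ block gives $\cg^{tt}=0$, $\cg^{t\rho}=t^{-1}$, $\cg^{\rho\rho}=-2\rho t^{-2}$, $\cg^{ij}=t^{-2}(g_\rho)^{ij}$; the block structure yields $\det\cg=-t^{2n+2}\det g_\rho$, so $\sqrt{\lv\cg\rv}=t^{n+1}\sqrt{\det g_\rho}$. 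The vanishing of $\cg^{tt}$ and $\cg_{\rho\rho}$ is the structural input that makes the lemma clean. Since $t^w$ depends only on $t$, one gets $\cnabla t^w=\cg^{at}\,wt^{w-1}\partial_a=wt^{w-2}\partial_\rho$, whence~\eqref{eqn:gradient} is immediate from $\cg_{\rho\rho}=0$.

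Next I would treat the divergence~\eqref{eqn:divergence} and Laplacian~\eqref{eqn:Laplacian} together, using that $\cdelta$ is the divergence so that $\cdelta\omega=\tfrac{1}{\sqrt{\lv\cg\rv}}\partial_a(\sqrt{\lv\cg\rv}\,\omega^a)$. Because $\cu\,\cnabla t^w=wt^{w+\mu-2}u\,\partial_\rho$ points purely along $\partial_\rho$, only the $a=\rho$ term survives and the powers of $t$ cancel against $\sqrt{\lv\cg\rv}$, so everything collapses to $wt^{w+\mu-2}\bigl(\partial_\rho u+(\tfrac12\partial_\rho\log\det g_\rho)u\bigr)$. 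The same coordinate divergence applied to $\cnabla\cu=t^{\mu-1}\partial_\rho u\,\partial_t+(\mu t^{\mu-2}u-2\rho t^{\mu-2}\partial_\rho u)\partial_\rho+t^{\mu-2}(g_\rho)^{ij}\partial_j u\,\partial_i$ produces~\eqref{eqn:Laplacian}: the $a=i$ term is exactly $t^{\mu-2}\Delta_\rho u$, the $a=t$ term gives $(n+\mu)t^{\mu-2}\partial_\rho u$, and the $a=\rho$ term supplies $-2\rho\partial_\rho^2u$, the remaining $\partial_\rho u$ coefficients, and the zeroth-order term $\mu(\tfrac12\partial_\rho\log\det g_\rho)u$.

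The one genuine obstacle is computing the logarithmic volume derivative $\tfrac12\partial_\rho\log\det g_\rho=\tfrac12\tr(g_\rho^{-1}\partial_\rho g_\rho)$ to order $\rho^2$, since its $\rho^2$-coefficient involves the $g$-trace of the $\rho^3$-term of $g_\rho$, which the first expansion in~\eqref{eqn:ambient} does not provide. I would circumvent this by writing $\log\det g_\rho=\log\det g+\tr\log(g^{-1}g_\rho)$ and expanding: the unknown $\rho^3$-coefficient of $g_\rho$ enters $\partial_\rho\log\det g_\rho$ only through its trace, and that trace is read off from the second expansion of $\tr_g g_\rho$ in~\eqref{eqn:ambient}, which is given to order $\rho^3$. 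Matching the $\rho^2$-coefficients of the two expansions uses the trace-freeness $\tr_g B=0$, after which a short bookkeeping gives $\tfrac12\partial_\rho\log\det g_\rho=J-\rho\lv P\rv^2+\rho^2Y$ with $Y$ as in~\eqref{eqn:Y}. Substituting this into the two collapsed expressions yields~\eqref{eqn:divergence} and~\eqref{eqn:Laplacian}; the first-order expansion~\eqref{eqn:Deltarho-to-Delta} of $\Delta_\rho$ then drops out separately by expanding $(g_\rho)^{ij}=g^{ij}-2\rho P^{ij}+o(\rho)$ and $\sqrt{\det g_\rho}/\sqrt{\det g}=1+\rho J+o(\rho)$ to first order.

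Finally, for the Hessian~\eqref{eqn:hessian} I would differentiate the gradient, $\cnabla^2t^w=\cnabla(wt^{w-2}\partial_\rho)$, which needs only the three Christoffel symbols contracting $\partial_\rho$. A direct computation gives $\cnabla_{\partial_t}\partial_\rho=t^{-1}\partial_\rho$, $\cnabla_{\partial_\rho}\partial_\rho=0$, and $\cnabla_{\partial_i}\partial_\rho=\tfrac12(g_\rho^{-1}\partial_\rho g_\rho)^k{}_i\,\partial_k$. The first identity promotes the naive coefficient $w(w-2)t^{w-3}$ to the stated $w(w-1)t^{w-3}$ in front of $dt\otimes\partial_\rho$; the second kills the $\partial_\rho$-direction; and the third produces the $wt^{w-2}U$ term once I observe, from the $\rho^2$-expansion of $g_\rho$ in~\eqref{eqn:ambient}, that $\tfrac12g_\rho^{-1}\partial_\rho g_\rho=P-\rho\bigl(P^2+\tfrac{1}{n-4}B\bigr)+o(\rho)=U$. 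Note this last term requires $g_\rho$ only to order $\rho^2$, so the Hessian avoids the log-determinant obstacle encountered above; only the $\rho^2Y$ terms of~\eqref{eqn:divergence} and~\eqref{eqn:Laplacian} genuinely demand the $\rho^3$-trace data.
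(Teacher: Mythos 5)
Your proposal is correct and follows essentially the same route as the paper: record the expansions of $\log\det g_\rho$ and $\cg^{-1}$ from Lemma~\ref{lem:ambient}, then apply the coordinate formulas for the gradient, divergence, Laplacian and Christoffel symbols. The only place you go beyond the paper's (very terse) proof is in spelling out that the $\rho^2Y$ coefficient requires the $g$-trace of the $\rho^3$-term of $g_\rho$, read off from the second expansion in~\eqref{eqn:ambient} via $\tr_gB=0$ --- a detail the paper subsumes under ``it follows immediately,'' and which you handle correctly.
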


\begin{proof}
 It follows immediately from Lemma~\ref{lem:ambient} that
 \begin{align}
  \label{eqn:determinant} \log\frac{\det g_\rho}{\det g} & = 2\rho J - \rho^2\lv P\rv^2 + \frac{2}{3}\rho^3Y + o(\rho^3) , \\
  \label{eqn:inverse} \cg^{-1} & = 2t^{-1}\partial_t\partial_\rho - 2\rho t^{-2}\partial_\rho\partial_\rho \\
   \notag & \quad + t^{-2}\left(g^{ij} - 2\rho P^{ij} + \rho^3\left(3P^{ik}P^j_k + \frac{1}{n-4}B^{ij}\right)\right) + o(\rho^2) .
 \end{align}
 In particular, \eqref{eqn:inverse} implies that for any $w\in\bR$, the vector field dual to $dt^w$ is $wt^{w-2}\partial_\rho$.  This yields~\eqref{eqn:gradient}.  Moreover, combining~\eqref{eqn:determinant} and~\eqref{eqn:inverse} with the coordinate formula for the inverse yields~\eqref{eqn:divergence} and~\eqref{eqn:Laplacian}.  Finally, using Lemma~\ref{lem:ambient} to compute the Christoffel symbols of $\cg$ yields~\eqref{eqn:hessian}.
\end{proof}

A natural way to describe conformally covariant operators is through conformal density bundles.  Let $(\cmG,\cg)$ be the ambient space of a Riemannian manifold $(M^n,g)$.  Given $w\in\bR$, let
\[ \cmE[w] := \left\{ u \in C^\infty(\cmG) \suchthat Xu = wu \right\} \]
denote the space of ambient functions which are homogeneous of degree $w$ with respect to the dilations $\delta_s$.  The \emph{conformal density bundle of weight $w$} is obtained by restricting $\cmE[w]$ to $\mG$,
\[ \mE[w] := \left\{ u\rv_{\mG} \suchthat u\in\cmE[w] \right\} . \]
A \emph{conformal density of weight $w$} is an element of $\mE[w]$.  Note that if $u$ is a conformal density of weight $w$, then
\begin{equation}
 \label{eqn:density_transformation}
 u\left(x,(e^{2\Upsilon}g)_x\right) = e^{w\Upsilon(x)}u(x,g_x)
\end{equation}
for all $x\in M$ and all $\Upsilon\in C^\infty(M)$.  Given $f\in C^\infty(M)$, define
\[ u(x,t^2g_x) := t^wf(x) . \]
This gives a $g$-dependent identification $C^\infty(M)\cong_g\mE[w]$.  A different element $\hg$ of the conformal class $[g]$ gives a different identification $C^\infty(M)\cong_{\hg}\mE[w]$, and the relation between the identifications $\cong_g$ and $\cong_{\hg}$ is easily determined from~\eqref{eqn:density_transformation}.

One can construct conformally covariant operators on $(M,g)$ by finding tangential operators in its ambient space.

\begin{defn}
 \label{defn:tangential}
 Let $(M^n,g)$ be a Riemannian manifold and let $(\cmG,\cg)$ be its ambient space.  A differential operator $\cL\colon\cmE[w]\to\cmE[w^\prime]$ is \emph{tangential} if the map
\[ \cmE[w] \ni u \mapsto \cL(u)\rv_{\mG} \in \mE[w^\prime] \]
depends only on $u\rv_{\mG}\in\mE[w]$.
\end{defn}

In particular, if $\cL$ is tangential, it induces a differential operator $L\colon\mE[w]\to\mE[w^\prime]$.  Let $L^g\colon C^\infty(M)\to C^\infty(M)$ be the operator determined by $L$ using the identifications $\mE[w]\cong_g C^\infty(M)$.  It follows from~\eqref{eqn:density_transformation} that
\[ L^{e^{2\Upsilon}g} = e^{w^\prime\Upsilon}\circ L^g\circ e^{-w\Upsilon} , \]
where $e^{w^\prime\Upsilon},e^{w \Upsilon}$ are regarded as multiplication operators.  In particular, $L^g$ is a conformally covariant operator.  A similar relationship holds between tangential polydifferential operators in $(\cmG,\cg)$ and conformally covariant polydifferential operators on $(M,g)$; see Section~\ref{sec:ambient}.

Since $Q:=\lv X\rv^2\in\cmE[2]$ is a defining function for $\mG$ --- that is, $Q^{-1}(0)=\mG$ and $dQ\rv_{\mG}\not=0$ --- we have an easy way to determine if an operator $\cL\colon\cmE[w]\to\cmE[w^\prime]$ is tangential.

\begin{lem}
 \label{lem:tangential}
 Let $(M^n,g)$ be a Riemannian manifold and let $(\cmG,\cg)$ be its ambient space.  A differential operator $\cL\colon\cmE[w]\to\cmE[w^\prime]$ is tangential if and only if
 \begin{equation}
  \label{eqn:cLQ}
  \cL(Qz) \equiv 0 \mod Q
 \end{equation}
 for all $z\in\cmE[w-2]$.
\end{lem}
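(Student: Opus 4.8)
The plan is to reduce the tangentiality of $\cL$ to a divisibility statement for homogeneous functions, the content of which is entirely captured by identifying the functions in $\cmE[m]$ that vanish on $\mG$ with the multiples of $Q$. Since differential operators are linear, $\cL$ is tangential precisely when $\cL(v)\rv_{\mG} = 0$ for every $v \in \cmE[w]$ with $v\rv_{\mG} = 0$: for $u_1, u_2 \in \cmE[w]$ with $u_1\rv_{\mG} = u_2\rv_{\mG}$ one applies this to $v := u_1 - u_2$, and conversely. Thus everything rests on the division principle
\[ \left\{\, v \in \cmE[m] \suchthat v\rv_{\mG} = 0 \,\right\} = \left\{\, Qz \suchthat z \in \cmE[m-2]\,\right\}, \]
which I would establish first, for every $m \in \bR$.

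The inclusion ``$\supseteq$'' is immediate: since $Q \in \cmE[2]$ and $z \in \cmE[m-2]$ one has $X(Qz) = mQz$, so $Qz \in \cmE[m]$, and $Qz$ vanishes on $\mG = Q^{-1}(0)$. For ``$\subseteq$'', let $v \in \cmE[m]$ with $v\rv_{\mG} = 0$. Here I would use that $Q$ is a defining function for $\mG$: it cuts out $\mG$ with $dQ\rv_{\mG} \neq 0$ and is nonzero off $\mG$. Near $\mG$ a Hadamard expansion in the transverse direction yields $v = Qh$ with $h$ smooth, while on the open dense set $\{Q \neq 0\}$ the function $h := v/Q$ is manifestly smooth; these two descriptions agree on the overlap, so $h$ is a single globally defined smooth function with $v = Qh$. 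That $h \in \cmE[m-2]$ then follows by homogeneity: on $\{Q \neq 0\}$ one computes $\delta_s^\ast h = (\delta_s^\ast v)/(\delta_s^\ast Q) = s^m v/(s^2 Q) = s^{m-2}h$, whence $Xh = (m-2)h$ there and, by continuity, on all of $\cmG$.

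Granting the division principle, the lemma follows formally. Tangentiality is the condition $\cL(v)\rv_{\mG} = 0$ for all $v \in \cmE[w]$ vanishing on $\mG$, which by the principle is exactly the condition $\cL(Qz)\rv_{\mG} = 0$ for all $z \in \cmE[w-2]$. Since $\cL(Qz) \in \cmE[w^\prime]$, a second application of the division principle (now with $m = w^\prime$) shows that $\cL(Qz)\rv_{\mG} = 0$ is equivalent to $\cL(Qz) \in Q\,\cmE[w^\prime - 2]$, that is, to $\cL(Qz) \equiv 0 \bmod Q$. Chaining these equivalences gives the asserted characterization.

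The only real content lies in the division principle, and within it the delicate point is the global smoothness of the quotient $h$: one must patch the local Hadamard division valid in a neighborhood of $\mG$ with the trivial division $v/Q$ valid away from $\mG$, which is legitimate precisely because $Q$ vanishes only on $\mG$ and only to first order. Once this is in place, the translation between ``vanishing on $\mG$'' and ``divisible by $Q$'', together with the weight bookkeeping, is routine.
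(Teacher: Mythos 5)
Your proof is correct and follows essentially the same route as the paper's: reduce tangentiality via linearity to the vanishing of $\cL$ on inputs that vanish along $\mG$, and then translate ``vanishes on $\mG$'' into ``is a multiple of $Q$'' using the fact that $Q$ is a defining function. The only difference is one of explicitness --- you supply a proof of the division principle (Hadamard division near $\mG$, trivial division off $\mG$, and the weight bookkeeping via $\delta_s^\ast$) and apply it on the output side as well, whereas the paper invokes both of these as standard properties of a defining function without comment.
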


\begin{proof}
 It is clear that if $\cL$ is tangential, then~\eqref{eqn:cLQ} holds for all $z\in\cmE[w-2]$.  Conversely, suppose that~\eqref{eqn:cLQ} holds for all $z\in\cmE[w-2]$.  Let $u_1,u_2\in\cmE[w]$ be such that $u_1\rv_\mG=u_2\rv_\mG$.  Since $Q$ is a defining function for $\mG$, it holds that $u_1-u_2=Qz$ for some $z\in\cmE[w-2]$.  Since $\cL$ is linear, we compute that
 \[ \cL(u_1)\rv_{\mG} = \cL(u_2)\rv_{\mG} + \cL(Qz)\rv_{\mG} = \cL(u_2)\rv_{\mG} . \]
 Therefore $\cL$ is tangential.
\end{proof}

Graham, Jenne, Mason and Sparling~\cite{GJMS1992} proved the commutator formula
\begin{equation}
 \label{eqn:DeltakX}
 [\cDelta^k,Q] = 2k\cDelta^{k-1}(2X + n + 4 - 2k)
\end{equation}
to show that the operator $(-\cDelta)^k$ is tangential on $\cmE\bigl[-\frac{n-2k}{2}\bigr]$.  In Sections~\ref{sec:trilinear}--\ref{sec:comments}, we show that certain ambient operators are tangential by using~\eqref{eqn:cLQ}, \eqref{eqn:DeltakX}, the fact $\cnabla Q=2X$, and its consequences
\begin{align}
 \label{eqn:divf} \cdelta\left(u\,\cd(Qv)\right) & = Q\cdelta\left(u\,\cd v\right) + 2(v\,Xu + 2u\,Xv + (n+2)uv), \\
 \label{eqn:divQ} \cdelta\left(Qu\,\cd v\right) & = Q\cdelta\left(u\,\cd v\right) + 2u\,Xv .
\end{align}
for all $u,v\in C^\infty(\cmG)$.
\section{Some facts about polydifferential operators}
\label{sec:defn}

In order to construct conformally covariant polydifferential operators on Riemannian manifolds, we must first introduce a number of useful concepts and notations.  We begin by discussing polydifferential operators.

Given a smooth manifold $M$, we denote by $\kM$ the space of Riemannian metrics on $M$.  Given also a nonnegative integer $k\in\bN_0$, we denote by $\Lin_k$ the space of $k$-multilinear maps $D$ from
\[ \left(C^\infty(M)\right)^k := \underbrace{ C^\infty(M) \times \dotsm \times C^\infty(M)}_{\text{$k$ copies}} \]
to $C^\infty(M)$ which act as differential operators on each of their inputs, with the convention that elements of $\Lin_0$ are smooth functions on $M$.

There are a number of ways to construct elements of $\Lin_k$.  We begin by describing two distinct ways to regard a function as a multiplication operator.

The first method is to multiply in all components.

\begin{defn}
 Let $k\in\bN_0$ and $\phi\in C^\infty(M)$.  The \emph{multiplication operator $\phi\colon\left(C^\infty(M)\right)^k\to\left(C^\infty(M)\right)^k$} is given by
 \[ \phi(u_1,\dotsc,u_k) := \left(\phi u_1, \dotsc, \phi u_k\right) \]
 for all $u_1,\dotsc,u_k\in C^\infty(M)$.
\end{defn}

Note that if $\phi,\psi\in C^\infty(M)$ and $D\in\Lin_k$, then composition yields a new operator $\phi\circ D\circ\psi\in\Lin_k$,
\begin{equation}
 \label{eqn:prepostcompose}
 \left(\phi\circ D\circ\psi\right)\left(u_1,\dotsc,u_k\right) := \phi D\left(\psi u_1, \dotsc, \psi u_k\right)
\end{equation}
for all $u_1,\dotsc,u_k\in C^\infty(M)$.

The second method is by summing over all choices of multiplying in only one component.

\begin{defn}
 The \emph{action of $\phi\in C^\infty(M)$ on $D\in\Lin_k$} is the operator $\phi\ast D\in\Lin_k$ given by
 \[ \left(\phi\ast D\right)\left(u_1,\dotsc,u_k\right) := \sum_{j=1}^k D\left(u_1, \dotsc, \phi u_j, \dotsc, u_k\right) \]
 for all $u_1,\dotsc,u_k\in C^\infty(M)$.
\end{defn}

We also need the analogue of the interior product.

\begin{defn}
 \label{defn:contraction}
 The \emph{contraction of $\phi\in C^\infty(M)$ into $D\in\Lin_k$} is the operator $D(\phi)\in\Lin_{k-1}$ given by
 \[ D(\phi)\left(u_1,\dotsc,u_{k-1}\right) := D\left(\phi,u_1,\dotsc,u_{k-1}\right) \]
 for all $u_1,\dotsc,u_{k-1}\in C^\infty(M)$.
\end{defn}

These methods can be combined.  For example, if $\phi,\psi\in C^\infty(M)$ and $D\in\Lin_k$, then one obtains elements of $\Lin_{k-1}$ by
\[ \left(\phi\ast D(\psi)\right)(u_1,\dotsc,u_{k-1}) := \sum_{j=1}^{k-1}D(\psi,u_1,\dotsc,\phi u_j,\dotsc, u_{k-1}) \]
for all $u_1,\dotsc,u_{k-1}\in C^\infty(M)$.  Note that
\begin{equation}
 \label{eqn:action-contraction}
 (\phi\ast D)(\psi) = D(\phi\psi)+\phi\ast D(\psi) .
\end{equation}

We now specialize to natural polydifferential operators in the sense of Definition~\ref{defn:natural}, and denote by $\Poly_k$ and $\Poly$ the spaces of natural $k$-differential operators and natural polydifferential operators, respectively.  Given a Riemannian manifold $(M^n,g)$ and an operator $D\in\Poly_k$, we denote by $D^g$ the element of $\Lin_k$ determined by $g$.  When the metric $g$ is clear by context, we often omit the superscript.

Recall from Definition~\ref{defn:formally_self-adjoint} that an element $D\in\Poly_k$ is formally self-adjoint if for all smooth manifolds $M$ and all $g\in\kM$, the map
\[ \bigl(C^\infty(M)\bigr)_0^{k+1} \ni (u_0,\dotsc,u_k) \mapsto \int_M u_0\,D(u_1,\dotsc,u_k)\,\dvol \]
is symmetric in its arguments, where 
\[ \bigl(C^\infty(M)\bigr)_0^{k+1} := \left\{ (u_0,\dotsc,u_k) \in \bigl(C^\infty(M)\bigr)^{k+1} \suchthat \text{$\supp\,(u_0\dotsm u_k)$ is compact} \right\} . \]
Note that $D$ is symmetric in its arguments if it is formally self-adjoint.  We denote by $\FSA_k\subset\Poly_k$ the set of all formally self-adjoint natural $k$-differential operators, and by $\FSA$ the set of all formally self-adjoint natural polydifferential operators.

Note that contraction with a constant, in the sense of Definition~\ref{defn:contraction}, yields a map from $\FSA_{k+1}$ to $\FSA_k$.

\begin{defn}
 Fix $k\in\bN_0$.  The \emph{interior operation} $I_{k+1}\colon\FSA_{k+1}\to\FSA_k$ is defined by
 \[ I_{k+1}(D)(u_1,\dotsc,u_k) := D(1,u_1,\dotsc,u_k) \]
 for all $D\in\FSA_k$ and all $u_1,\dotsc,u_k\in C^\infty(M)$.
 
 An operator $D\in\FSA_k$ \emph{annihilates constants} if $D\in\ker I_k$.
\end{defn}

The homogeneity of CVIs implies that the polydifferential operators constructed from CVIs are also homogeneous.

\begin{defn}
 An operator $D\in\FSA_k$ is \emph{homogeneous of weight $w\in\bR$} if $D^{c^2g}=c^wD^g$ for all $g\in\kM$ and all constants $c>0$.  We denote by
 \[ \FSA_k^{w} = \left\{ D\in\FSA_k \suchthat \text{$D$ is homogeneous of weight $w$} \right\} . \]
\end{defn}

Note that the interior operation restricts to a map from $\FSA_{k+1}^w$ to $\FSA_k^w$.

The weight provides a useful organizational tool for classifying natural Riemannian polydifferential operators.  Of particular importance are the facts that the weight of a nonzero element $D\in\Poly_k$ is a nonpositive even integer and that there is a sharp upper bound on the weight of a nonzero operator which annihilates constants.

\begin{lem}
 \label{lem:kernel-pi}
 Fix $k\in\bN_0$ and $w\in\bR$.
 \begin{enumerate}
  \item If $w\not\in -2\bN_0$, then $\FSA_k^{w}=\{0\}$.
  \item If $w\geq -k$, then $\FSA_k^{w}\cap\ker I_k=\{0\}$.
  \item If $k$ is odd, then $\FSA_k^{-k-1}\cap\ker I_k$ is spanned by the operator
  \begin{equation}
   \label{eqn:top-degree-operator}
   L(u_1,\dotsc,u_k) := \sum_{\sigma\in S_k} \delta\left(\lp\nabla u_{\sigma(1)},\nabla u_{\sigma(2)}\rp\dotsb\lp\nabla u_{\sigma(k-2)},\nabla u_{\sigma(k-1)}\rp\,du_{\sigma(k)}\right) .
  \end{equation}
 \end{enumerate}
\end{lem}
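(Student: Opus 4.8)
The plan is to extract everything from two structural facts. First, by Weyl's invariant theory a natural $k$-differential operator is, at each point and in normal coordinates, a universal polynomial in the jets of the inputs $u_1,\dots,u_k$ and of the metric, and these jets are algebraically independent, so there is no cancellation between monomials built from different jet-variables. Second, formal self-adjointness forces the $(k+1)$-linear form $(u_0,\dots,u_k)\mapsto\int u_0\,D(u_1,\dots,u_k)$ to be symmetric, in particular symmetric in $u_1,\dots,u_k$. For part (1) I would argue by weight counting: each $D(u_1,\dots,u_k)$ is a linear combination of complete contractions of $\nabla^{m_1}u_1\otimes\dots\otimes\nabla^{m_k}u_k$ with curvature factors $\nabla^{p_i}\Rm$, all indices paired with the inverse metric. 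Under $g\mapsto c^2g$ such a contraction scales by $c^{-(2C+\sum_j m_j+\sum_i p_i)}$, where $C$ is the number of curvature factors, and the total index count $\sum_j m_j+4C+\sum_i p_i$ must be even for the contraction to close up; hence the exponent is a nonpositive even integer. Homogeneity of weight $w$ forces every surviving term to have exponent $w$, so $\FSA_k^w=\{0\}$ unless $w\in-2\bN_0$.

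The engine for parts (2) and (3) is this. Because $D$ is linear in $u_1$, every contraction contains exactly one factor $\nabla^{m_1}u_1$; collecting the $m_1=0$ terms as $u_1\cdot E$ and setting $u_1\equiv1$ turns $D(1,u_2,\dots,u_k)\equiv0$ into $E\equiv0$, and by algebraic independence this means every term must differentiate $u_1$. Symmetry then makes \emph{every} input differentiated in every term, so $\sum_j m_j\ge k$, while part (1) gives $2C+\sum_j m_j+\sum_i p_i=\lv w\rv$, hence $\sum_j m_j\le\lv w\rv$. For part (2), $w\ge-k$ gives $k\le\sum_j m_j\le\lv w\rv\le k$, so all are equalities: $C=0$, every $m_j=1$, and $\lv w\rv=k$. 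If $w>-k$ this is a contradiction and $D=0$. If $w=-k$ (so $k$ is even), $D$ is curvature-free with each input differentiated once, hence determined by its flat-space symbol, a symmetric polynomial $P(\xi_1,\dots,\xi_k)$ that is linear in each $\xi_j$. Self-adjointness adds symmetry under swapping an input with the undifferentiated test slot $u_0$; writing $\xi_0=-(\xi_1+\dots+\xi_k)$ on the hyperplane $\sum_j\xi_j=0$ and using linearity in the first slot, this becomes $2P(\xi_1,\dots,\xi_k)=-\sum_{j=2}^k P(\xi_j,\xi_2,\dots,\xi_k)$, whose right-hand side is independent of $\xi_1$ while the left-hand side is linear in $\xi_1$. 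Thus both vanish and $P=0$; since distinct perfect matchings give linearly independent symbols, $D=0$.

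For part (3), $w=-k-1$ with $k$ odd gives $\lv w\rv=k+1$, so $k\le\sum_j m_j\le k+1$. The value $\sum_j m_j=k$ would force $2C+\sum_i p_i=1$, impossible; hence $C=0$, $\sum_j m_j=k+1$, and exactly one input is differentiated twice and the rest once. Again the operator is curvature-free, so it is determined by its flat-space symbol, an $S_k$-symmetric polynomial of multidegree $(2,1,\dots,1)$. I would show the space of such symbols is two-dimensional, spanned by the two possible invariant contraction patterns for the two index-slots of the twice-differentiated variable: the \emph{self-pairing} type, a symmetrization of $\lv\xi_i\rv^2$ times a product of inner products $\xi_a\cdot\xi_b$ among the others, and the \emph{cross-pairing} type, a symmetrization of $(\xi_i\cdot\xi_a)(\xi_i\cdot\xi_b)$ times such a product. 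These are the only options because every other variable carries a single index. Imposing the test-slot symmetry exactly as in part (2) yields one linear relation between the coefficients of the two types, and I would check this relation is nontrivial, cutting the candidate space down to a line.

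Finally I would identify that line with the span of the displayed operator $L$. Integrating the divergence by parts onto the test function shows that the symmetric form attached to $L$ has symbol proportional to $\sum_M\prod_{\{a,b\}\in M}(\xi_a\cdot\xi_b)$, the sum over all perfect matchings $M$ of $\{0,1,\dots,k\}$; this is manifestly symmetric in all $k+1$ slots, so $L$ is formally self-adjoint, annihilates constants, has weight $-k-1$, and is nonzero, hence spans. The main obstacle is this last step for general odd $k$: verifying that there really are exactly two invariant types and that self-adjointness imposes exactly one nontrivial relation between them. The cleanest route is to eliminate $\xi_0$ from the matching symbol of $L$ and check it is a genuine combination of both types (for $k=3$ one finds it proportional to self-pairing $+\,2\times$ cross-pairing), which simultaneously shows the self-adjoint space is nonzero and that neither type alone is self-adjoint, forcing dimension exactly one.
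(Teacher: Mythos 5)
Your parts (1) and (2) are correct, but you take a genuinely different route from the paper's for (2) and (3), and the difference is what leaves a hole in (3). The paper's key structural step is that an operator $D\in\FSA_k^{w}\cap\ker I_k$ not only kills constants in every slot (hence factors through $du_1,\dotsc,du_k$) but, by formal self-adjointness, is a pure divergence: $D=\delta\bigl(A(du_1,\dotsc,du_k)\bigr)$ for a natural one-form--valued $A$ of weight $w+2$. Since $A$ must close up $k$ one-form indices to a single free index, it carries at least $\lfloor k/2\rfloor$ metric contractions, so $w+2\leq-2\lfloor k/2\rfloor\leq-(k-1)$, i.e.\ $w\leq-k-1$: part (2) is immediate with no case analysis, and in the extremal case $w=-k-1$, $k$ odd, $A$ is forced to be the symmetrized product of $(k-1)/2$ inverse metrics, so $D$ is a multiple of $L$ on the spot. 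You instead count derivatives on $D$ itself, which only yields $w\leq-k$, and you then dispose of the borderline case $w=-k$ ($k$ even) by the flat-symbol identity $2P=-\sum_{j\geq2}P(\xi_j,\xi_2,\dotsc,\xi_k)$. That computation is sound (the left side is linear homogeneous in $\xi_1$, the right side is $\xi_1$-free, and the matching monomials are independent for large $n$), so (1) and (2) are complete, just longer than necessary.

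Part (3) is where the proposal has a genuine gap. The reduction to the two contraction types $E_1$ (the $\Delta u_i$ type) and $E_2$ (the $\nabla^2u_i(\nabla u_a,\nabla u_b)$ type), and the verification that $L$ is formally self-adjoint via the all-matchings symbol $\sum_M\prod_{\{a,b\}\in M}(\xi_a\cdot\xi_b)$, are both fine. But to get dimension exactly one you must exhibit, for every odd $k$, a nonzero element of the two-dimensional candidate space that is \emph{not} formally self-adjoint. Your closing argument does not do this: decomposing $L$ as (a multiple of) $E_1+2E_2$ shows the self-adjoint subspace contains a vector with both coordinates nonzero, but it says nothing about whether $E_1$ alone is self-adjoint unless you already know the subspace is a line --- the inference ``$L$ involves both types, hence neither type alone is self-adjoint, hence dimension one'' is circular. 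A separate computation (e.g.\ evaluating the hyperplane-restricted symbol of $E_1$ before and after swapping $\xi_0\leftrightarrow\xi_1$ at a suitable test point) is needed and is carried out for no value of $k$, not even $k=3$. The cleanest repair is the paper's divergence observation itself: since $\int_M D(u_1,\dotsc,u_k)\,\dvol_g=0$ for any $D\in\FSA_k^{w}\cap\ker I_k$, such a $D$ is the divergence of a natural one-form--valued operator in the $du_j$, and at weight $-k-1$ there is only one such one-form up to scale, whose divergence is $L$.
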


\begin{remark}
 Here $\lp \cdot , \cdot \rp$ denotes the Riemannian metric $g$, and hence $\nabla u$ denotes the gradient (with respect to $g$) of $u$.  While \eqref{eqn:top-degree-operator} can be written purely in terms of the exterior derivatives $du$ and the induced metric on $T^\ast M$, we have opted to present \eqref{eqn:top-degree-operator} in its current form to make it more clear what is the one-form whose divergence is being taken.
\end{remark}

\begin{proof}
 The Levi-Civita connection and the Riemann curvature tensor $R_{ijk}{}^l$ both have weight $0$, while the inverse metric $g^{ij}$ has weight $-2$.  It follows from the definition of naturality that if $\FSA_k^{w}\not=\{0\}$, then $w$ must be a nonpositive even integer.  This proves the first assertion.

 Suppose now that $D\in\FSA_k^{w}\cap\ker I_k$ for some $w\geq -k$.  In particular, $D(1)=0$.  Since $D$ is symmetric, it must factor through the exterior derivative; i.e.
 \[ D(u_1,\dotsc,u_k) = B(du_1,\dotsc,du_k) \]
 for some polydifferential operator $B\colon\bigl(\Omega^1M\bigr)^k\to C^\infty(M)$ on one-forms.  Since $D$ is formally self-adjoint, it must also be in the image of the divergence.  Hence there is a multilinear operator $A\colon\bigl(\Omega^1M\bigr)^k\to\Omega^1M$ such that
 \[ D(u_1,\dotsc,u_k) = \delta\left(A(du_1,\dotsc,du_k)\right) . \]
 Since the exterior derivative $d$ is homogeneous of weight $0$ and the divergence $\delta$ is homogeneous of weight $-2$, it follows that $A$ is homogenous of weight $w+2$.  The naturality of $D$ implies that $A$ is natural.  Since $A$ maps $\left(\Omega^1M\right)^k$ to $\Omega^1M$, if $A$ is nonzero, then $A$ involves at least $\lfloor k/2\rfloor$ contractions.  Thus if $A\not=0$, then $w+2\leq -2\lfloor k/2\rfloor$.  Using the fact $2\lfloor k/2\rfloor\geq k-1$, we obtain two consequences:
 \begin{enumerate}
  \item If $w\geq -k$, then $A=0$.  This implies the second assertion.
  \item If $k$ is odd and $w=-k-1$, then $A$ involves exactly $\lfloor k/2\rfloor = (k-1)/2$ contractions, and hence $A$ is a constant multiple of the symmetrization of $g^{i_1i_2}\dotsb g^{i_{k-2}i_{k-1}}$.  This implies the third assertion. \qedhere
 \end{enumerate}
\end{proof}

As noted in the introduction, any conformally covariant natural polydifferential operator is homogeneous, and moreover, the degree of homogeneity is determined by the bidegree.  Conversely, the bidegree of any conformally covariant formally self-adjoint polydifferential operator is determined by the degree of homogeneity.

\begin{lem}
 \label{lem:find_weights}
 Let $D\in\FSA_\ell^{-2k}$ be conformally covariant of bidegree $(a,b)$.  Then
 \[ a = \frac{n-2k}{\ell+1} \quad\text{and}\quad b = \frac{n\ell+2k}{\ell+1} , \]
 where $n=\dim M$.
\end{lem}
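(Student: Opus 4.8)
The plan is to pin down $(a,b)$ by producing two independent linear relations among $a$, $b$, $n$, $k$, and $\ell$: one coming from the homogeneity of $D$ and one from its formal self-adjointness. Solving the resulting $2\times 2$ system then gives the stated values.

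First I would extract the homogeneity relation. Applying the conformal covariance \eqref{eqn:conformally_covariant} with the constant function $\Upsilon = \log c$, so that $e^{2\Upsilon}g = c^2 g$, and using the $\ell$-linearity of $D^g$ gives
\[ D^{c^2 g}(u_1,\dotsc,u_\ell) = c^{-b}D^g(c^a u_1,\dotsc,c^a u_\ell) = c^{a\ell - b}D^g(u_1,\dotsc,u_\ell). \]
Comparing with the assumed homogeneity $D^{c^2 g} = c^{-2k}D^g$ of $D \in \FSA_\ell^{-2k}$ yields the first relation $a\ell - b = -2k$.

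Next I would use formal self-adjointness to obtain the relation $a + b = n$. Writing $B^g(u_0,\dotsc,u_\ell) := \int_M u_0\,D^g(u_1,\dotsc,u_\ell)\,\dvol_g$, formal self-adjointness asserts that $B^g$ is symmetric in all $\ell+1$ slots for every metric $g$ in the conformal class. Substituting the covariance relation together with $\dvol_{e^{2\Upsilon}g} = e^{n\Upsilon}\dvol_g$ gives the key identity
\[ B^{e^{2\Upsilon}g}(u_0,\dotsc,u_\ell) = B^g\bigl(e^{(n-b)\Upsilon}u_0,\, e^{a\Upsilon}u_1,\dotsc,e^{a\Upsilon}u_\ell\bigr). \]
Thus the weight $n-b$ attached to the distinguished slot $u_0$ differs a priori from the weight $a$ attached to the remaining slots. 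The symmetry of $B^{e^{2\Upsilon}g}$ under the transposition of slots $0$ and $1$, combined with the symmetry of $B^g$ used to relabel the right-hand side, forces
\[ B^g\bigl(e^{(n-b)\Upsilon}u_0, e^{a\Upsilon}u_1, e^{a\Upsilon}u_2,\dotsc\bigr) = B^g\bigl(e^{a\Upsilon}u_0, e^{(n-b)\Upsilon}u_1, e^{a\Upsilon}u_2,\dotsc\bigr) \]
for all $\Upsilon$. Replacing $\Upsilon$ by $t\Upsilon$ and differentiating at $t=0$ collapses this to $(n-b-a)\bigl[B^g(\Upsilon u_0, u_1,\dotsc) - B^g(u_0, \Upsilon u_1,\dotsc)\bigr] = 0$, so, since the bracketed expression is not identically zero for an operator of positive differential order, I conclude $a + b = n$.

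Finally, solving $a\ell - b = -2k$ together with $a + b = n$ gives $a(\ell+1) = n-2k$, hence $a = \frac{n-2k}{\ell+1}$ and then $b = n - a = \frac{n\ell + 2k}{\ell+1}$, as claimed. I expect the self-adjointness step to be the main obstacle: the homogeneity relation is immediate, but extracting $a+b=n$ requires care in handling the asymmetry between the $u_0$ slot and the others, and in justifying that the bracketed difference above is genuinely nonvanishing --- equivalently, in ruling out the degenerate possibility that $D$ is pure multiplication by a conformal invariant, for which the bidegree fails to be unique. Conceptually, $a+b=n$ is precisely the statement that the associated Dirichlet form pairs densities whose weights sum to $-n$, i.e.\ that it is conformally invariant; this is the natural compatibility forced by requiring self-adjointness throughout the conformal class together with conformal covariance.
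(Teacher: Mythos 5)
Your proof is correct and follows essentially the same route as the paper: homogeneity under constant rescalings gives $b = a\ell + 2k$, and integrating the covariance relation against $u_0\,\dvol_{e^{2\Upsilon}g}$ and invoking the symmetry of the resulting $(\ell+1)$-linear form gives $n-b=a$. Your extra linearization step and the caveat about the degenerate case of an order-zero (pure multiplication) operator, for which the bidegree is not unique, make explicit a point the paper's proof passes over silently.
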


\begin{proof}
 Let $c>0$ be a constant.  Applying~\eqref{eqn:conformally_covariant} yields
 \[ D^{c^2g} = c^{a\ell-b}D^g . \]
 Hence $b=a\ell+2k$.  Integrating~\eqref{eqn:conformally_covariant} against $u_0\dvol_{e^{2\Upsilon}g}$ yields
 \[ \int_M u_0 D^{e^{2\Upsilon}g}(u_1,\dotsc,u_\ell)\dvol_{e^{2\Upsilon}g} = \int_M e^{(n-b)\Upsilon}u_0 D^g\left(e^{a\Upsilon}u_1,\dotsc,e^{a\Upsilon}u_\ell\right)\,\dvol_g \]
 for all $u_1,\dotsc,u_\ell\in C^\infty(M)$.  Since $D$ is formally self-adjoint, both sides are symmetric in all variables, and hence $n-b=a$.  The conclusion readily follows.
\end{proof}

We conclude by observing that an element of $\FSA$ is conformally covariant if and only if it is infinitesimally conformally covariant.  This generalizes an observation of Branson~\cite{Branson1995} for $1$-differential operators.

\begin{lem}
 \label{lem:infinitesimal_covariant_operator}
 An operator $D\in\Poly$ is conformally covariant of bidegree $(a,b)$ if and only if for every metric $g$ and every $\Upsilon\in C^\infty(M)$, it holds that
 \begin{equation}
  \label{eqn:infinitesimal_covariant_operator}
  \left.\frac{\partial}{\partial t}\right|_{t=0} \left(e^{bt\Upsilon}\circ D^{e^{2t\Upsilon}g}\circ e^{-at\Upsilon}\right) = 0 .
 \end{equation}
\end{lem}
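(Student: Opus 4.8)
The plan is to prove the two implications separately, with the reverse direction carrying essentially all of the content.

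For the forward direction, suppose $D$ is conformally covariant of bidegree $(a,b)$. First I would rewrite \eqref{eqn:conformally_covariant} in the operator notation of \eqref{eqn:prepostcompose} as $D^{e^{2t\Upsilon}g}=e^{-bt\Upsilon}\circ D^g\circ e^{at\Upsilon}$, valid for every $t\in\bR$ (replacing $\Upsilon$ by $t\Upsilon$). Substituting this into the conjugated family and using that pre- and post-composition by multiplication operators telescope — $e^{bt\Upsilon}\circ e^{-bt\Upsilon}$ acts as the identity on outputs and $e^{at\Upsilon}\circ e^{-at\Upsilon}$ as the identity on each input — shows that $e^{bt\Upsilon}\circ D^{e^{2t\Upsilon}g}\circ e^{-at\Upsilon}=D^g$ for all $t$. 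Being constant in $t$, its derivative at $t=0$ vanishes, which is \eqref{eqn:infinitesimal_covariant_operator}.

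For the reverse direction, fix $g$ and $\Upsilon$ and define the $\Lin_\ell$-valued curve
\[ F(s):=e^{bs\Upsilon}\circ D^{e^{2s\Upsilon}g}\circ e^{-as\Upsilon}. \]
The hypothesis \eqref{eqn:infinitesimal_covariant_operator} says precisely $F'(0)=0$, but a priori only at $s=0$; the goal is to upgrade this to $F'(s)=0$ for every $s$, whence $F$ is constant and $F(1)=F(0)=D^g$ rearranges to \eqref{eqn:conformally_covariant}. The mechanism is a semigroup (cocycle) identity: writing $h:=e^{2s_0\Upsilon}g$, one has $e^{2(s_0+t)\Upsilon}g=e^{2t\Upsilon}h$, so
\[ F(s_0+t)=e^{bs_0\Upsilon}\circ\left(e^{bt\Upsilon}\circ D^{e^{2t\Upsilon}h}\circ e^{-at\Upsilon}\right)\circ e^{-as_0\Upsilon}. \]
The inner parenthesized expression is exactly the curve whose derivative at $t=0$ the hypothesis controls, now applied to the metric $h$; this is legitimate because \eqref{eqn:infinitesimal_covariant_operator} is assumed for \emph{every} metric. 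Since the outer conjugation by $e^{bs_0\Upsilon}$ and $e^{-as_0\Upsilon}$ is a fixed linear map on $\Lin_\ell$ independent of $t$, differentiating at $t=0$ gives $F'(s_0)=0$. As $s_0$ was arbitrary and $s\mapsto F(s)(u_1,\dotsc,u_\ell)(x)$ is smooth for each fixed inputs and each point $x$, the curve $F$ is constant.

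The only genuine subtlety — and the step I would take the most care over — is the algebra of the composition operation $\circ$ of \eqref{eqn:prepostcompose}: one must verify that $E\mapsto e^{bs_0\Upsilon}\circ E\circ e^{-as_0\Upsilon}$ is a single linear map commuting with $\frac{d}{dt}$, and that the two nested layers of conjugation compose as a genuine group action, so that the inner factor above really is the infinitesimal-covariance curve for $h$. This is routine once one observes that multiplication operators in distinct input slots commute and that pre-/post-composition interacts with the metric dependence of $D^g$ only through $D^g$ itself; no analytic input beyond the smooth dependence of $D^g$ on the conformal factor is required.
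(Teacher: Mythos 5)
Your proof is correct and follows the same route as the paper, whose own argument for the converse is just the one-line remark that integrating \eqref{eqn:infinitesimal_covariant_operator} over $t\in[0,1]$ yields \eqref{eqn:conformally_covariant}. The cocycle identity you supply (using that the hypothesis holds for \emph{every} metric, applied to $h=e^{2s_0\Upsilon}g$) is exactly the step needed to justify that the derivative vanishes for all $t$ and not merely at $t=0$, so you have simply made explicit what the paper leaves implicit.
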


\begin{proof}
 If $D$ is conformally covariant of bidegree $(a,b)$, then~\eqref{eqn:conformally_covariant} yields
 \[ e^{bt\Upsilon}\circ D^{e^{2t\Upsilon}g}\circ e^{-at\Upsilon} = D^g \]
 for all $t\in\bR$.  In particular, \eqref{eqn:infinitesimal_covariant_operator} holds.  Conversely, if~\eqref{eqn:infinitesimal_covariant_operator} holds, then integrating~\eqref{eqn:infinitesimal_covariant_operator} over $t\in[0,1]$ yields~\eqref{eqn:conformally_covariant}.
\end{proof}

\section{Constructing conformally covariant operators from CVIs}
\label{sec:operator}

We now turn to the construction of the conformally covariant polydifferential operator $D$ associated to a given CVI $L$ whose existence is asserted in Theorem~\ref{thm:existence}.  The main idea is that, by Lemma~\ref{lem:infinitesimal_covariant_operator}, the existence of such an operator implies that the function $f(t):= L^{e^{2t\Upsilon}g}$ lies in the kernel of a differential operator of order equal to the rank of $D$.  The derivatives of $f$ are natural Riemannian operators associated to $L$, while the above differential operator effectively determines $D$ in terms of these derivatives.  However, this approach provides neither a clear proof of the formal self-adjointness of $D$ nor a way to compute the rank.  Both of these defects are addressed through the following definition:

\begin{defn}
 \label{defn:Ljell}
 Let $L$ be a CVI of weight $-2k$ and fix $\ell\in\bN$.  Given $j\in\bN_0$, define $L_j^\ell\in\Poly_j$ recursively by $L_0^\ell=L$ and
 \[ L_{j+1}^\ell(u) = \left.\frac{\partial}{\partial t}\right|_{t=0} \biggl[ e^{2ktu}\left(L_j^\ell\right)^{e^{2tu}g} \biggr] + \frac{n-2k}{\ell}\left((\ell-1)uL_j^\ell - u\ast L_j^\ell\right) \]
 for all $j\geq0$ and $u\in C^\infty(M^n)$, where equality is understood as equality of elements of $\Poly_j$.
\end{defn}

Equivalently, Definition~\ref{defn:Ljell} defines $L_j^\ell$ recursively by $L_0^\ell=L$ and
\begin{equation}
 \label{eqn:Ljell_defn_consequence}
 L_{j+1}^\ell(u) = \left.\frac{\partial}{\partial t}\right|_{t=0} \left[ e^{\frac{n(\ell-1)+2k}{\ell}tu}\circ \left(L_j^\ell\right)^{e^{2tu}g}\circ e^{-\frac{n-2k}{\ell}tu} \right] .
\end{equation}
Thus we are associating a family of natural Riemannian operators to $L$ by taking first-order derivatives, though these are taken by composing with multiplication operators which depend on the dimension $n$, the weight $-2k$, and the expected rank $\ell$ of the multilinear operator.

The following lemma verifies our claim that the operators $L_j^\ell$ are natural.  Additionally, it uses the homogeneity of $L$ to relate $L_j^\ell$ to the image of $L_{j+1}^\ell$ under the interior operation $I_{j+1}$.

\begin{lem}
 \label{lem:Lj_simple_properties}
 Let $L$ be a CVI of weight $-2k$ and fix $\ell\in\bN$.  For any $j\in\bN_0$, it holds that $L_j^\ell$ is a homogeneous natural Riemannian operator of weight $-2k$ and
 \begin{equation}
  \label{eqn:piLj}
  L_{j+1}^\ell(1) = \frac{(n-2k)(\ell-j-1)}{\ell}L_j^\ell .
 \end{equation}
\end{lem}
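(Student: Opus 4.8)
The plan is to prove both assertions simultaneously by induction on $j$, carrying naturality and homogeneity of weight $-2k$ as the inductive hypotheses and deducing \eqref{eqn:piLj} from the latter. The base case $j=0$ is immediate from Definition~\ref{defn:cvi}: $L_0^\ell=L$ is a CVI of weight $-2k$, hence a natural Riemannian scalar invariant (a natural $0$-differential operator) that is homogeneous of degree $-2k$.

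For the inductive step, suppose $L_j^\ell\in\Poly_j$ is natural and homogeneous of weight $-2k$. First I would check naturality. The operator $L_{j+1}^\ell$ is assembled from $L_j^\ell$ by three operations: the conformal linearization $\left.\frac{\partial}{\partial t}\right|_{t=0}(L_j^\ell)^{e^{2tu}g}$, multiplication by $u$, and the action $u\ast(\cdot)$. The latter two visibly preserve naturality. For the first, one substitutes the standard conformal transformation formulas for the connection and curvature of $e^{2tu}g$ into the universal contraction formula defining $L_j^\ell$; these formulas are polynomial in $tu$, its covariant derivatives, and the curvature of $g$, so differentiating in $t$ at $t=0$ yields a linear combination of complete contractions that is linear in the finitely many covariant derivatives of $u$. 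Thus the conformal linearization lies in $\Poly_{j+1}$, and hence so does $L_{j+1}^\ell$.

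Next I would verify homogeneity directly from the defining recursion, using that constant rescalings commute with conformal changes, $e^{2tu}(c^2g)=c^2(e^{2tu}g)$. Applying the inductive hypothesis $(L_j^\ell)^{c^2h}=c^{-2k}(L_j^\ell)^h$ with $h=e^{2tu}g$ and with $h=g$, and noting that the multiplication operators and the action $\ast$ do not depend on the metric, every term in the expression for $(L_{j+1}^\ell)^{c^2g}(u)$ acquires precisely one factor of $c^{-2k}$. Therefore $(L_{j+1}^\ell)^{c^2g}=c^{-2k}(L_{j+1}^\ell)^g$, so $L_{j+1}^\ell$ is homogeneous of weight $-2k$.

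Finally, \eqref{eqn:piLj} follows by setting $u=1$ in the recursion. Because $L_j^\ell$ is now known to be homogeneous of weight $-2k$, the identity $(L_j^\ell)^{e^{2t}g}=e^{-2kt}(L_j^\ell)^g$ shows that $e^{2ktu}(L_j^\ell)^{e^{2tu}g}$ is constant in $t$ when $u=1$, so the derivative term vanishes. In the algebraic term, $1\cdot L_j^\ell=L_j^\ell$ while $1\ast L_j^\ell=jL_j^\ell$ since the action sums over the $j$ arguments; this leaves $\frac{n-2k}{\ell}\left((\ell-1)-j\right)L_j^\ell=\frac{(n-2k)(\ell-j-1)}{\ell}L_j^\ell$, which is \eqref{eqn:piLj}. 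The computation is otherwise routine; the only genuinely delicate point is the naturality of the conformal linearization --- the single place where the universal-formula structure of natural operators is invoked --- together with the structural observation that homogeneity of $L_j^\ell$ is exactly what is needed both to close the induction and to annihilate the derivative term at $u=1$, producing the clean coefficient in \eqref{eqn:piLj}.
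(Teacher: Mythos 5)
Your proposal is correct and follows essentially the same route as the paper: induction on $j$, using naturality of the conformal variations of the connection and curvature to propagate naturality and homogeneity, and then setting $u=1$ in the recursion, where homogeneity kills the derivative term and the algebraic term gives the coefficient $\frac{(n-2k)(\ell-j-1)}{\ell}$. You simply spell out the homogeneity check and the $u=1$ computation (including $1\ast L_j^\ell=jL_j^\ell$) in more detail than the paper does.
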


\begin{proof}
 Let $\ell\in\bN$.  Since $L$ is a CVI of weight $-2k$, we see that $L_0^\ell$ is a homogeneous natural Riemannian operator of weight $-2k$.  Suppose that $L_j^\ell$ is a homogeneous natural Riemannian operator of weight $-2k$ for some $j\in\bN_0$.  Since the conformal variations of the Levi-Civita connection and the Riemann curvature tensor are natural Riemannian operators, we conclude from Definition~\ref{defn:Ljell} that $L_{j+1}^\ell$ is a natural Riemannian operator of weight $-2k$.  We conclude by induction that $L_j^\ell$ is a natural Riemannian operator of weight $-2k$ for all $j\in\bN_0$.
 
 Taking $u=1$ in Definition~\ref{defn:Ljell} yields~\eqref{eqn:piLj}.
\end{proof}

Crucially, the operators $L_j^\ell$ in Definition~\ref{defn:Ljell} are formally self-adjoint.

\begin{prop}
 \label{prop:Lj_sym}
 Let $L$ be a CVI of weight $-2k$ and let $\ell\in\bN$.  For each nonnegative integer $j$, it holds that $L_j^\ell\in\FSA_j^{-2k}$.
\end{prop}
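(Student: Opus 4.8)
The plan is to induct on $j$, proving the equivalent bookkeeping statement that for every metric the $(j+1)$-linear form
\[ \mB_j(v_0,\dots,v_j) := \int_M v_0\,L_j^\ell(v_1,\dots,v_j)\,\dvol_g \]
is symmetric in all of its arguments; by Definition~\ref{defn:formally_self-adjoint} this is exactly $L_j^\ell\in\FSA_j^{-2k}$, the homogeneity and naturality already being supplied by Lemma~\ref{lem:Lj_simple_properties}. The base case $j=0$ is vacuous, since $L_0^\ell=L$ is a scalar and $\mB_0(v_0)=\int_M v_0 L\,\dvol_g$ has a single argument.

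For the inductive step I would first extract the "easy" symmetries. Writing $a=\tfrac{n-2k}{\ell}$ and using $\dvol_g=e^{-nt u_1}\dvol_{e^{2tu_1}g}$ together with the reweighted form~\eqref{eqn:Ljell_defn_consequence} of Definition~\ref{defn:Ljell}, one rewrites
\[ \mB_{j+1}(u_0,u_1,u_2,\dots,u_{j+1}) = \left.\frac{\partial}{\partial t}\right|_{t=0}\mB_j^{e^{2tu_1}g}\bigl(e^{-atu_1}u_0,\,e^{-atu_1}u_2,\dots,e^{-atu_1}u_{j+1}\bigr). \]
Since the inductive hypothesis makes $\mB_j^{\hat g}$ symmetric for \emph{every} metric $\hat g$, and since the conformal factor $u_1$ and the common dressing $e^{-atu_1}$ act symmetrically on the slots $u_0,u_2,\dots,u_{j+1}$, this formula shows at once that $\mB_{j+1}$ is symmetric in the $j+1$ arguments $\{u_0,u_2,\dots,u_{j+1}\}$. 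As these permutations together with the single transposition $u_0\leftrightarrow u_1$ generate the full symmetric group on $j+2$ letters, the proposition reduces to this one transposition, which exchanges the distinguished conformal direction $u_1$ with a genuine slot.

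The heart of the matter is this transposition, and here the variational origin of $L$ must enter. When the weight and dimension are balanced, $a=0$ (e.g.\ $n=2k$), the correction terms of Definition~\ref{defn:Ljell} drop out, and an induction identifies $\mB_{j+1}$ with an iterated conformal variation of a conformal primitive $\mS$ for $L$: applying the defining property of $\mS$ at the metric $e^{2s_{j+1}u_{j+1}}g$ and using $\dvol_{\hat g}=e^{ns_{j+1}u_{j+1}}\dvol_g$ with $n=2k$ gives
\[ \left.\frac{\partial^{\,j+2}}{\partial s_0\cdots\partial s_{j+1}}\right|_{0}\mS\bigl(e^{2\sum_{m=0}^{j+1}s_mu_m}g\bigr) = \mB_{j+1}(u_0,u_{j+1},u_1,\dots,u_j), \]
whose left-hand side is manifestly symmetric in all $u_m$ because mixed partial derivatives commute. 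This settles the transposition, and indeed full symmetry, when $a=0$. The base instance $j=0$ of the transposition holds in \emph{all} dimensions directly: there $L_1^\ell(u_1)=S(u_1)+a(\ell-1)u_1L$, and $\int u_0 S(u_1)\,\dvol$ is symmetric by Lemma~\ref{lem:cvi_linearization} while the remaining term is a product.

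The hard part is upgrading this to arbitrary dimension with $a\neq0$. The $a$-dependent dressing $e^{-atu_1}$ and weight $b'=n-a$ are calibrated so that $L_{j+1}^\ell$ is conformally covariant of the correct bidegree, and I would show they preserve the symmetry established at $a=0$. One route is Branson's analytic continuation in the dimension~\cite{Branson1995}: the $L_j^\ell$ are universal natural operators whose coefficients depend polynomially on $n$ through $a$, so the antisymmetric defect $\mB_{j+1}(u_0,u_1,\dots)-\mB_{j+1}(u_1,u_0,\dots)$ is a natural polydifferential operator with polynomial-in-$n$ coefficients, already seen to vanish in the balanced case; the expansion~\eqref{eqn:branson_trick} records exactly how the general-$n$ operators interpolate to it. A more self-contained route decomposes $L_{j+1}^\ell$ into its weighted conformal linearization $\tfrac{\partial}{\partial t}\big|_0\bigl[e^{2ktu_1}(L_j^\ell)^{e^{2tu_1}g}\bigr]$ plus the corrections $\tfrac{n-2k}{\ell}\bigl((\ell-1)u_1L_j^\ell-u_1\ast L_j^\ell\bigr)$ and shows that the transposition-defects of the two pieces cancel, by differentiating the self-adjointness of $L_j^\ell$ along the family $e^{2tu_1}g$ and using~\eqref{eqn:action-contraction} and~\eqref{eqn:piLj} to reorganize the result. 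I expect the genuine obstacle to lie precisely here: as the boundary-condition count shows, a conformal direction and a genuine test slot can never be made interchangeable inside a single symmetric two-parameter family, so the transposition cannot follow from commuting variations of $\mB_j$ alone — the CVI hypothesis must re-enter at each stage, and the main effort is verifying that the linearization defect is cancelled exactly by the weight corrections, with the coefficient $a=\tfrac{n-2k}{\ell}$ being forced by this cancellation.
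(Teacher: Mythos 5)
Your reduction is sound and in places cleaner than the paper's: the identity
\[ \mB_{j+1}(u_0,u_1,u_2,\dotsc,u_{j+1}) = \left.\frac{\partial}{\partial t}\right|_{t=0}\mB_j^{e^{2tu_1}g}\bigl(e^{-atu_1}u_0,e^{-atu_1}u_2,\dotsc,e^{-atu_1}u_{j+1}\bigr) \]
does follow from~\eqref{eqn:Ljell_defn_consequence} and $\dvol_g=e^{-ntu_1}\dvol_{e^{2tu_1}g}$, and it correctly yields symmetry in all slots except the conformal direction, reducing everything to the single transposition $u_0\leftrightarrow u_1$. (The paper reaches the same reduction differently: it gets operator symmetry in the first two inputs by commuting the mixed partials $\partial_s\partial_t$ of $e^{2k(su_0+tu_1)}(L_{j-1}^\ell)^{g_{s,t}}$, which is why its induction carries two levels of hypothesis.) Your treatment of the critical dimension via iterated derivatives of the conformal primitive $\mS$ is also a legitimate alternative there.

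The genuine gap is that you never actually prove the transposition in general dimension, which is the heart of the proposition. Your first route fails as stated: the antisymmetric defect is a natural operator whose coefficients are polynomial in $n$, and knowing it vanishes at the \emph{single} value $n=2k$ only shows the defect is divisible by $(n-2k)$, not that it is zero; Branson-style continuation would require vanishing for infinitely many $n$ or an independent argument, neither of which you supply. Your second route is exactly the paper's argument --- differentiate the self-adjointness identity $0\equiv\int_M u_0\,(L_j^\ell)^{g_t}(u_1)\,\dvol_{g_t}$ along $g_t=e^{2tu_2}g$, collect the correction terms, and use~\eqref{eqn:action-contraction} together with the self-adjointness of $L_j^\ell$ to recognize $\frac{n-2k}{\ell}\bigl(u_0u_2L_j^\ell(u_1)+u_1u_2L_j^\ell(u_0)\bigr)$ as manifestly symmetric, leaving $0\equiv\int_M u_0\,L_{j+1}^\ell(u_1,u_2,\dotsc)\,\dvol_g$ --- but you only describe it and explicitly defer the verification that the cancellation works. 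That computation is short (a few lines in the paper) but it is the one place the CVI hypothesis and the precise coefficient $\frac{n-2k}{\ell}$ in Definition~\ref{defn:Ljell} do real work, so without it the proof is incomplete.
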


\begin{proof}
 Let $\ell\in\bN$.  By Lemma~\ref{lem:Lj_simple_properties}, we need only show that $L_j^\ell$ is formally self-adjoint for all $j$.  We prove this by induction.

 It trivially holds that $L_0^\ell\in\FSA_0^{-2k}$.  Since $L$ is a CVI,
 \[ L_1^\ell(u) = S(u) + \frac{(\ell-1)(n-2k)}{\ell}Lu \]
 for all $u\in C^\infty(M)$, where $S$ is the formally self-adjoint operator from Lemma~\ref{lem:cvi_linearization}.  In particular $L_1^\ell$ is formally self-adjoint.

 Suppose now that $j\in\bN$ is such that $L_{j-1}^\ell$ and $L_j^\ell$ are both formally self-adjoint.

 We first show that $L_{j+1}^\ell$ is symmetric.  Since $L_j^\ell$ is symmetric, it follows readily from Definition~\ref{defn:Ljell} that $L_{j+1}^\ell(u_0)$ is symmetric as an element of $\Lin_j$.  Thus it suffices to show that $L_{j+1}^\ell(u_0,u_1)$ is symmetric in $u_0,u_1$.  Let $u_0,u_1\in C^\infty(M)$.  Define $g_{s,t}:=e^{2(su_0+tu_1)}g$ and denote $g_s:=g_{s,0}$.  A direct computation using Definition~\ref{defn:Ljell} yields
 \begin{multline*}
  \left.\frac{\partial^2}{\partial s\partial t}\right|_{s,t=0} e^{2k(su_0+tu_1)}\left(L_{j-1}^\ell\right)^{g_{s,t}} = \left.\frac{\partial}{\partial s}\right|_{s=0} e^{2ksu_0}\biggl[ \left(L_j^\ell\right)^{g_s}(u_1) \\ - \frac{n-2k}{\ell}\left((\ell-1)u_1\left(L_{j-1}^\ell\right)^{g_s} - u_1 \ast \left(L_{j-1}^\ell\right)^{g_s}\right) \biggr] .
 \end{multline*}
 For general maps $A,B\colon\bigl(C^\infty(M)\bigr)^2\to\Lin_{j-1}$, we write $A(u_0,u_1)\equiv B(u_0,u_1)$ if $A(u_0,u_1)-B(u_0,u_1)$ is symmetric in $(u_0,u_1)$.  Using Definition~\ref{defn:Ljell} again, we compute that
 \begin{multline*}
  \left.\frac{\partial}{\partial s}\right|_{s=0} e^{2ksu_0}\biggl[ (\ell-1)u_1\left(L_{j-1}^\ell\right)^{g_s} - u_1\ast\left(L_{j-1}^\ell\right)^{g_s} \biggr] \\ \equiv (\ell-1)u_1\left(L_j^\ell\right)^g(u_0) - u_1 \ast (L_j^\ell)^g(u_0) .
 \end{multline*}
 Combining the previous two displays yields
 \[ \left.\frac{\partial^2}{\partial s\partial t}\right|_{s,t=0}\left(e^{2k(su_0+tu_1)}\left(L_{j-1}^\ell\right)^{g_{s,t}}\right) \equiv \left(L_{j+1}^\ell\right)^g(u_0,u_1) . \]
 Since the left-hand side is symmetric in $u_0,u_1$, we conclude that $L_{j+1}^\ell$ is symmetric in $u_0,u_1$.

 We now show that $L_{j+1}^\ell$ is formally self-adjoint.  Since $L_{j+1}^\ell$ is symmetric, the functional
 \begin{equation}
  \label{eqn:fsa_functional}
  (u_0,u_1,\dotsc,u_{j+1}) \mapsto \int_M u_0\left(L_{j+1}^\ell\right)^g(u_1,\dotsc,u_{j+1})\,\dvol_g
 \end{equation}
 is symmetric in $(u_1,\dotsc,u_{j+1})$.  It thus suffices to show that~\eqref{eqn:fsa_functional} is symmetric in $(u_0,u_1)$.  To that end, set $g_t=e^{2tu_2}g$.  Since $L_j^\ell$ is formally self-adjoint, we have that
 \begin{equation}
  \label{eqn:fsa1_functional}
  0 \equiv \int_M u_0 \left(L_j^\ell\right)^{g_t}(u_1)\,\dvol_{g_t}
 \end{equation}
 for all $t\in\bR$, where we regard the right-hand side of~\eqref{eqn:fsa1_functional} as a family of functionals on $\left(C^\infty(M)\right)^{j-1}$ parameterized by $t$ via the formula
 \[ (u_3,\dotsc,u_{j+1}) \mapsto \int_M u_0\left(L_j^\ell\right)^{g_t}(u_1,u_3,u_4,\dotsc,u_{j+1})\,\dvol_{g_t} . \]
 Differentiating~\eqref{eqn:fsa1_functional} at $t=0$ yields
 \begin{multline*}
  0 \equiv \int_M \biggl[ u_0\left(L_{j+1}^\ell\right)^g(u_1,u_2) \\ + \frac{n-2k}{\ell}\left(u_0u_2\left(L_j^\ell\right)^g(u_1) + u_0\left(u_2\ast\left(L_j^\ell\right)^g\right)(u_1)\right) \biggr]\,\dvol_g
 \end{multline*}
 By~\eqref{eqn:action-contraction}, we compute that
 \begin{align*}
  \int_M u_0\left(u_2\ast \bigl(L_j^\ell\bigr)^g\right)(u_1)\,\dvol_g & = \int_M \left[ u_0\bigl(L_j^\ell\bigr)^g(u_1u_2) + u_0\left(u_2\ast \bigl(L_j^\ell\bigr)^g(u_1)\right) \right]\,\dvol_g \\
  & \equiv \int_M u_0\bigl(L_j^\ell\bigr)^g(u_1u_2)\,\dvol_g \\
  & \equiv \int_M u_1u_2 \bigl(L_j^\ell\bigr)^g(u_0)\,\dvol_g ,
 \end{align*}
 where the second line follows from the self-adjointness of $L_j^\ell$.  Combining the previous two displays yields
 \[ 0 \equiv \int_M u_0 \bigl(L_{j+1}^\ell\bigr)^g(u_1)\,\dvol_g, \]
 as desired.
\end{proof}

We now characterize conformally covariant polydifferential operators associated to a CVI in terms of the operators $L_j^\ell$.  To avoid using analytic continuation in the dimension, we separately present the cases of noncritical and critical dimensions.

First we consider noncritical dimensions.

\begin{prop}
 \label{prop:invariant_operator}
 Let $L$ be a CVI of weight $-2k$.  Given $\ell\in\bN$, there is a conformally covariant operator $D\in\FSA_{\ell}^{-2k}$ on manifolds of dimension $n>2k$ such that
 \begin{equation}
  \label{eqn:covariant_operator_normalization}
  D(1,\dotsc,1) = \left(\frac{n-2k}{\ell+1}\right)^\ell L
 \end{equation}
 if and only if $L_{\ell+1}^{\ell+1}=0$.
\end{prop}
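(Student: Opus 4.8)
The plan is to exhibit an explicit candidate, namely $D:=\frac{1}{\ell!}L_\ell^{\ell+1}$, and to show both that it has all the required properties precisely when $L_{\ell+1}^{\ell+1}=0$, and that it is the \emph{only} operator that could possibly work. I would begin by recording the features of $D$ that hold unconditionally. By Proposition~\ref{prop:Lj_sym} we have $L_\ell^{\ell+1}\in\FSA_\ell^{-2k}$, hence $D\in\FSA_\ell^{-2k}$. Iterating the interior identity~\eqref{eqn:piLj} with $\ell$ replaced by $\ell+1$, namely $L_{j+1}^{\ell+1}(1)=\frac{(n-2k)(\ell-j)}{\ell+1}L_j^{\ell+1}$, down from $j=\ell-1$ to $j=0$ produces the telescoping product $1\cdot2\cdots\ell=\ell!$ together with $\ell$ factors of $\frac{n-2k}{\ell+1}$, so that $D(1,\dotsc,1)=\bigl(\tfrac{n-2k}{\ell+1}\bigr)^\ell L$; this is exactly~\eqref{eqn:covariant_operator_normalization}. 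Finally, Lemma~\ref{lem:find_weights} forces any conformally covariant operator in $\FSA_\ell^{-2k}$ to have bidegree $(a,b)=\bigl(\tfrac{n-2k}{\ell+1},\tfrac{n\ell+2k}{\ell+1}\bigr)$, so this is the only bidegree we ever need to test.

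The key observation is that the recursion~\eqref{eqn:Ljell_defn_consequence}, read with $\ell$ replaced by $\ell+1$ and $j=\ell$, says exactly
\[ L_{\ell+1}^{\ell+1}(\Upsilon) = \ell!\left.\frac{\partial}{\partial t}\right|_{t=0}\Bigl(e^{b t\Upsilon}\circ D^{e^{2t\Upsilon}g}\circ e^{-at\Upsilon}\Bigr), \]
because the two outer exponents $\frac{n\ell+2k}{\ell+1}$ and $\frac{n-2k}{\ell+1}$ appearing there are precisely $b$ and $a$. Thus the infinitesimal conformal covariance defect of $D$ at bidegree $(a,b)$ is $\frac{1}{\ell!}L_{\ell+1}^{\ell+1}(\Upsilon)$. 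Since $L_{\ell+1}^{\ell+1}$ is symmetric (Proposition~\ref{prop:Lj_sym}), it vanishes identically iff $L_{\ell+1}^{\ell+1}(\Upsilon)=0$ for all $\Upsilon$, so by the infinitesimal criterion of Lemma~\ref{lem:infinitesimal_covariant_operator} the operator $D$ is conformally covariant iff $L_{\ell+1}^{\ell+1}=0$. Combined with the first paragraph, this already settles the ``if'' direction, with $D$ as the desired operator.

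For the ``only if'' direction I would prove a uniqueness statement: any conformally covariant $\widetilde D\in\FSA_\ell^{-2k}$ satisfying~\eqref{eqn:covariant_operator_normalization} must equal $D$. Writing $\widetilde D_j(u_1,\dotsc,u_j):=\widetilde D(u_1,\dotsc,u_j,1,\dotsc,1)$ (a symmetric operator, as $\widetilde D$ is), I would show by induction on $j$ that $\widetilde D_j=c_j L_j^{\ell+1}$ with $c_j=\frac{(\ell-j)!}{\ell!}\bigl(\tfrac{n-2k}{\ell+1}\bigr)^{\ell-j}$, the case $j=0$ being the normalization. For the inductive step I would evaluate the infinitesimal covariance of $\widetilde D$ on the inputs $(u_1,\dotsc,u_j,1,\dotsc,1)$ with $\Upsilon=u$, and split the action $u\ast\widetilde D$ according to which slot $u$ lands in: the $j$ genuine inputs reproduce $u\ast\widetilde D_j$, while the $\ell-j$ inserted constants produce $a(\ell-j)\widetilde D_{j+1}$. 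Solving for $\widetilde D_{j+1}$ leaves $\widetilde D_{j+1}=\frac{1}{a(\ell-j)}\bigl[\left.\partial_t\right|_{0}(\widetilde D_j)^{e^{2tu}g}+bu\widetilde D_j-a(u\ast\widetilde D_j)\bigr]$, whose bracket is exactly the product-rule expansion of~\eqref{eqn:Ljell_defn_consequence}, i.e.\ $c_jL_{j+1}^{\ell+1}(u)$; since $c_{j+1}=c_j/\bigl(a(\ell-j)\bigr)$ (legitimate because $a\neq0$ as $n>2k$, and $\ell-j\geq1$), the induction closes. At $j=\ell$ this gives $\widetilde D=\frac{1}{\ell!}L_\ell^{\ell+1}=D$, and since $\widetilde D=D$ is conformally covariant, the identity of the previous paragraph forces $L_{\ell+1}^{\ell+1}=0$.

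I expect the main obstacle to be this inductive step: one must carefully track the splitting of the action $u\ast\widetilde D$ into input slots versus constant slots, and then recognize the surviving combination as the expansion of Definition~\ref{defn:Ljell}, all while keeping the dimension-dependent constants $c_j$ consistent. The feature that makes the match succeed is precisely that the two outer exponents occurring in~\eqref{eqn:Ljell_defn_consequence} coincide with the bidegree $(a,b)$ dictated by Lemma~\ref{lem:find_weights}; equivalently, the relations $b=a\ell+2k$ and $n-b=a$ underlying that lemma are what let the recursion for $\widetilde D$ and the defining recursion for $L_{j+1}^{\ell+1}$ line up term by term.
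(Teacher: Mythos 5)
Your proposal is correct and follows essentially the same route as the paper: the candidate $D=\frac{1}{\ell!}L_\ell^{\ell+1}$, formal self-adjointness from Proposition~\ref{prop:Lj_sym}, the normalization by iterating~\eqref{eqn:piLj}, conformal covariance via Lemma~\ref{lem:infinitesimal_covariant_operator} matched against~\eqref{eqn:Ljell_defn_consequence}, and the same induction $D_j=\frac{(\ell-j)!}{\ell!}\bigl(\frac{n-2k}{\ell+1}\bigr)^{\ell-j}L_j^{\ell+1}$ for the converse. The only additions are expository (e.g.\ explicitly noting $a\neq0$ when $n>2k$), and the constants all check out.
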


\begin{proof}
 Suppose first that $L_{\ell+1}^{\ell+1}=0$.  Set $D=\frac{1}{\ell!}L_{\ell}^{\ell+1}$.   Lemma~\ref{lem:infinitesimal_covariant_operator} and~\eqref{eqn:Ljell_defn_consequence} imply that $D$ is conformally covariant, while Proposition~\ref{prop:Lj_sym} implies that $D$ is formally self-adjoint.  Moreover, repeated applications of Lemma~\ref{lem:Lj_simple_properties} implies that~\eqref{eqn:covariant_operator_normalization} holds.

 Suppose now that $D\in\FSA_\ell^{-2k}$ is a conformally covariant operator such that~\eqref{eqn:covariant_operator_normalization} holds.  Combining Lemma~\ref{lem:find_weights} and Lemma~\ref{lem:infinitesimal_covariant_operator} yields
 \begin{equation}
  \label{eqn:invariant_assumption}
  \left.\frac{\partial}{\partial t}\right|_{t=0} e^{\frac{n\ell+2k}{\ell+1}t\Upsilon}\circ D^{e^{2t\Upsilon}g} \circ e^{-\frac{n-2k}{\ell+1}t\Upsilon} = 0 .
 \end{equation}
 By combining this with~\eqref{eqn:Ljell_defn_consequence}, we see that it suffices to show that $L_\ell^{\ell+1}=\ell!D$.  To that end, for any $1\leq j\leq\ell$, denote by $D_j$ the operator
 \[ D_j(u_1,\dotsc,u_j) := D(u_1,\dotsc,u_j,1,\dotsc,1) . \]
 Using~\eqref{eqn:invariant_assumption}, we compute that
 \[ \left.\frac{\partial}{\partial t}\right|_{t=0}e^{2ktu_0}D_j^{e^{2tu_0}g} = \frac{n-2k}{\ell+1}\Bigl[ (\ell-j)D_{j+1}(u_0) + u_0\ast D_j - \ell u_0 D_j \Bigr] . \]
 A simple induction argument starting from~\eqref{eqn:covariant_operator_normalization} thus yields
 \[ D_j = \frac{(\ell-j)!}{\ell!}\left(\frac{n-2k}{\ell+1}\right)^{\ell-j}L_j^{\ell+1} . \]
 Taking $j=\ell$ yields the desired result.
\end{proof}

Next we consider the critical dimension.

\begin{prop}
 \label{prop:critical_invariant_operator}
 Let $L$ be a CVI of weight $-2k$.  Given $\ell\in\bN$, there is a conformally covariant operator $D\in\FSA_\ell^{-2k}$ on $2k$-dimensional manifolds such that
 \begin{equation}
  \label{eqn:critical_invariant_operator_normalization}
  \left.\frac{\partial^\ell}{\partial t^\ell}\right|_{t=0} e^{2ktu}L^{e^{2tu}g} = D(u,\dotsc,u)
 \end{equation}
 if and only if $L_{\ell+1}^{\ell+1}=0$.  Moreover, if~\eqref{eqn:critical_invariant_operator_normalization} holds, then
 \begin{equation}
  \label{eqn:critical_invariance}
  e^{n\Upsilon}\left(L_j^{\ell+1}\right)^{e^{2\Upsilon}g}(u_1,\dotsc,u_j) = \sum_{s=0}^{\ell-j} \frac{1}{s!}\left(L_{j+s}^{j+s}\right)^g(u_1,\dotsc,u_j,\Upsilon,\dotsc,\Upsilon) .
 \end{equation}
 for all $0\leq j\leq \ell$, all metrics $g$, and all smooth functions $\Upsilon,u_1,\dotsc,u_j$.
\end{prop}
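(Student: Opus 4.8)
The plan is to mirror the proof of Proposition~\ref{prop:invariant_operator}, exploiting the drastic simplification of Definition~\ref{defn:Ljell} in the critical dimension. First I would observe that when $n=2k$ the prefactor $\frac{n(\ell-1)+2k}{\ell}$ in~\eqref{eqn:Ljell_defn_consequence} equals $n$ while the input weight $\frac{n-2k}{\ell}$ vanishes, so the recursion collapses to
\[ L_{j+1}^\ell(u) = \left.\frac{\partial}{\partial t}\right|_{t=0} e^{ntu}\left(L_j^\ell\right)^{e^{2tu}g} , \]
which is manifestly independent of $\ell$. Hence $L_j^\ell=L_j^{\ell'}$ for all $\ell,\ell'$ in dimension $n=2k$, and I would write $L_j:=L_j^j=L_j^{\ell+1}$ for the common operator, so that $L_{\ell+1}^{\ell+1}=L_{\ell+1}$ and the operators $L_{j+s}^{j+s}$ on the right of~\eqref{eqn:critical_invariance} all coincide with $L_{j+s}$.

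The technical core is a flow identity. Setting $g_t=e^{2tu}g$ and $\Theta_j(t):=e^{ntu}(L_j)^{g_t}\in\Lin_j$, the semigroup relation $e^{2(s+t)u}g=e^{2su}g_t$ together with the fact that the reduced recursion holds at \emph{every} metric yields $\Theta_j'(t)=\Theta_{j+1}(t)(u)$, where $(\cdot)(u)$ denotes contraction of $u$ into the first slot. Iterating this and using the symmetry of the $L_j$ from Proposition~\ref{prop:Lj_sym} to rearrange slots, I would obtain $\Theta_j^{(s)}(0)=(L_{j+s})^g(u,\dotsc,u)$ with $s$ copies of $u$; in particular, taking $j=0$ and $s=\ell$ and recalling $n=2k$ gives $\left.\frac{\partial^\ell}{\partial t^\ell}\right|_{t=0} e^{2ktu}L^{e^{2tu}g}=L_\ell(u,\dotsc,u)$. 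This single computation powers both the equivalence and the ``moreover'' clause.

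For the equivalence I would argue as follows. If $L_{\ell+1}=0$, set $D=L_\ell^{\ell+1}$; it lies in $\FSA_\ell^{-2k}$ by Proposition~\ref{prop:Lj_sym}, it satisfies~\eqref{eqn:critical_invariant_operator_normalization} by the flow identity, and since Lemma~\ref{lem:find_weights} forces the bidegree to be $(0,n)$, Lemma~\ref{lem:infinitesimal_covariant_operator} reduces conformal covariance to $\left.\frac{\partial}{\partial t}\right|_{t=0}e^{nt\Upsilon}(L_\ell)^{e^{2t\Upsilon}g}=L_{\ell+1}(\Upsilon)=0$, which holds by hypothesis. Conversely, given such a $D$, the normalization~\eqref{eqn:critical_invariant_operator_normalization} together with the flow identity shows $D(u,\dotsc,u)=L_\ell(u,\dotsc,u)$ for all $u$; since both operators are symmetric, polarization gives $D=L_\ell$, and then the same computation of the infinitesimal conformal covariance of $D$ forces $L_{\ell+1}(\Upsilon)=0$ for all $\Upsilon$, i.e.\ $L_{\ell+1}^{\ell+1}=0$.

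Finally, for~\eqref{eqn:critical_invariance} I would note that the hypothesis that~\eqref{eqn:critical_invariant_operator_normalization} holds gives, via the equivalence just proved, $L_{\ell+1}=0$, whence $\Theta_{\ell+1}\equiv0$ and each $\Theta_j$ is a polynomial in $t$ of degree at most $\ell-j$. Its exact (finite) Taylor expansion $\Theta_j(t)=\sum_{s=0}^{\ell-j}\frac{t^s}{s!}(L_{j+s})^g(u,\dotsc,u)$, evaluated at $t=1$ and $u=\Upsilon$ with $2k=n$, yields~\eqref{eqn:critical_invariance} after restoring the superscripts via $L_m=L_m^m$. The main obstacle is establishing the flow identity $\Theta_j'(t)=\Theta_{j+1}(t)(u)$ cleanly: it requires carefully combining the semigroup property of conformal rescalings with the naturality of the recursively defined $L_j^\ell$, and keeping track of the contracted slots --- which is precisely where the symmetry from Proposition~\ref{prop:Lj_sym} is needed to match the slot orderings appearing in~\eqref{eqn:critical_invariance}. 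Everything else reduces to routine differentiation and polarization.
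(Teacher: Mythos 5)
Your proof is correct and takes essentially the same approach as the paper: collapse the recursion of Definition~\ref{defn:Ljell} in dimension $n=2k$ to get $L_j^\ell=L_j^j$, iterate the resulting first-order identity (your flow identity is exactly the paper's equation~\eqref{eqn:critical_Ljell_defn_consequence2}, which it obtains by "iterating"), and then read off both directions of the equivalence and the polynomial Taylor expansion giving~\eqref{eqn:critical_invariance}. Your write-up is merely more explicit than the paper about the semigroup/slot-ordering bookkeeping behind the iteration step.
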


\begin{proof}
 Since $n=2k$, we conclude from~\eqref{eqn:Ljell_defn_consequence} that
 \begin{equation}
  \label{eqn:critical_Ljell_defn_consequence}
  \left.\frac{\partial}{\partial t}\right|_{t=0} e^{nt\Upsilon}\circ\left(L_j^s\right)^{e^{2t\Upsilon}g} = \left(L_{j+1}^s\right)^g(\Upsilon)
 \end{equation}
 for any $j,s\in\bN_0$.  A simple induction argument using~\eqref{eqn:critical_Ljell_defn_consequence} implies that $L_j^s=L_j^j$ for all $j,s\in\bN_0$.  By iterating~\eqref{eqn:critical_Ljell_defn_consequence}, we conclude that
 \begin{equation}
  \label{eqn:critical_Ljell_defn_consequence2}
  \left.\frac{\partial^s}{\partial t^s}\right|_{t=0} e^{nt\Upsilon}\circ\left(L_j^{\ell+1}\right)^{e^{2t\Upsilon}g}(u_1,\dotsc,u_j) = \left(L_{j+s}^{j+s}\right)^g(u_1,\dotsc,u_j,\Upsilon,\dotsc,\Upsilon) .
 \end{equation}
 for all $0\leq j\leq\ell+1$ and all $0\leq s\leq \ell+1-j$.

 Suppose first that $L_{\ell+1}^{\ell+1}=0$.  Set $D=L_\ell^{\ell+1}$.  Lemma~\ref{lem:infinitesimal_covariant_operator} and~\eqref{eqn:critical_Ljell_defn_consequence} imply that $D$ is conformally covariant, while Proposition~\ref{prop:Lj_sym} implies that $D$ is formally self-adjoint.  We conclude that~\eqref{eqn:critical_invariant_operator_normalization} holds by applying~\eqref{eqn:critical_Ljell_defn_consequence2} in the case $j=0$ and $s=\ell$.

 Suppose now that $D\in\FSA_\ell^{-2k}$ is a conformally covariant operator such that~\eqref{eqn:critical_invariant_operator_normalization} holds.  Then using $j=0$ and $s=\ell$ in~\eqref{eqn:critical_Ljell_defn_consequence2} implies that $D=L_\ell^{\ell+1}$.  On the other hand, Lemma~\ref{lem:find_weights} and Lemma~\ref{lem:infinitesimal_covariant_operator} imply that
 \begin{equation}
  \label{eqn:critical_invariant_assumption}
  \left.\frac{\partial}{\partial t}\right|_{t=0} e^{nt\Upsilon}\circ D^{e^{2t\Upsilon}g} = 0 .
 \end{equation}
 Applying~\eqref{eqn:critical_Ljell_defn_consequence} implies that $L_{\ell+1}^{\ell+1}=0$.

 Finally, since $L_{\ell+1}^{\ell+1}=0$, we conclude from~\eqref{eqn:critical_Ljell_defn_consequence2} that $e^{nt\Upsilon}\left(L_j^{\ell+1}\right)^{e^{2t\Upsilon}g}(u_1,\dotsc,u_j)$ is a polynomial in $t$ of degree $\ell-j$.  Integrating~\eqref{eqn:critical_Ljell_defn_consequence2} then yields~\eqref{eqn:critical_invariance}.
\end{proof}

It follows from Proposition~\ref{prop:invariant_operator} and Proposition~\ref{prop:critical_invariant_operator} that a conformally covariant polydifferential operator can be associated to a given CVI $L$ if it is known that $L_{\ell}^{\ell}=0$ for some $\ell\in\bN$.  The existence of such an $\ell$ is guaranteed by the following lemma. 

\begin{lem}
 \label{lem:finite_rank}
 Let $L$ be a CVI of weight $-2k$.  Then $L_{2k}^{2k}=0$.
\end{lem}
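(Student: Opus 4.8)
The plan is to combine the structural properties of the operators $L_j^{2k}$ established above with the dimension/weight obstruction recorded in Lemma~\ref{lem:kernel-pi}(2). The point is that $L_{2k}^{2k}$ is, by Proposition~\ref{prop:Lj_sym}, an element of $\FSA_{2k}^{-2k}$: a formally self-adjoint natural $2k$-differential operator whose weight is exactly $-2k$. Reading Lemma~\ref{lem:kernel-pi}(2) with its ``$k$'' taken to be $2k$ and its ``$w$'' taken to be $-2k$, the hypothesis $w\geq -k$ becomes the borderline equality $-2k\geq -2k$, which holds. Thus the lemma applies to $L_{2k}^{2k}$ and tells us that $\FSA_{2k}^{-2k}\cap\ker I_{2k}=\{0\}$. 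It therefore suffices to verify that $L_{2k}^{2k}$ annihilates constants, i.e.\ that $L_{2k}^{2k}\in\ker I_{2k}$.

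This last fact is immediate from the homogeneity relation~\eqref{eqn:piLj}. Specializing that identity to $\ell=2k$ and $j=2k-1$ gives
\[ L_{2k}^{2k}(1) = \frac{(n-2k)\bigl(2k-(2k-1)-1\bigr)}{2k}\,L_{2k-1}^{2k} = 0, \]
since the numerical factor $\ell-j-1=2k-(2k-1)-1$ vanishes. Because the contraction $L_{2k}^{2k}(1)$ is precisely $I_{2k}(L_{2k}^{2k})$, this says exactly that $L_{2k}^{2k}\in\ker I_{2k}$.

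Combining the two observations yields $L_{2k}^{2k}\in\FSA_{2k}^{-2k}\cap\ker I_{2k}=\{0\}$, which is the assertion. I do not expect any genuine obstacle here: all of the real content has already been absorbed into Proposition~\ref{prop:Lj_sym} (formal self-adjointness of the $L_j^\ell$) and into Lemma~\ref{lem:kernel-pi} (the sharp upper bound on the weight of a nonzero self-adjoint operator annihilating constants). The only thing to check is that the indices line up, and the ``coincidence'' that makes it work is that the number of arguments $2k$ equals the negative of the weight, so that the bound $w\geq -k$ is saturated at precisely the stage where~\eqref{eqn:piLj} forces the interior operation to vanish.
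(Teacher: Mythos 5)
Your proof is correct and follows exactly the paper's own argument: formal self-adjointness and weight $-2k$ come from Proposition~\ref{prop:Lj_sym} and Lemma~\ref{lem:Lj_simple_properties}, the vanishing of $L_{2k}^{2k}(1)$ comes from the factor $\ell-j-1=0$ in~\eqref{eqn:piLj}, and Lemma~\ref{lem:kernel-pi}(2) at the borderline case $w=-k$ finishes it. The only difference is that you spell out the index bookkeeping that the paper leaves implicit.
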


\begin{proof}
 Lemma~\ref{lem:Lj_simple_properties} and Proposition~\ref{prop:Lj_sym} together imply that $L_{2k}^{2k}\in\FSA_{2k}^{-2k}\cap\ker I_{2k}$.  We conclude from Lemma~\ref{lem:kernel-pi} that $L_{2k}^{2k}=0$.
\end{proof}

Lemma~\ref{lem:finite_rank} guarantees that the rank, as defined below, is finite.

\begin{defn}
 \label{defn:rank}
 The \emph{rank} of a CVI $L$ of weight $-2k$ is the integer
 \[ r := \inf \left\{ \ell\in\bN \suchthat L_\ell^\ell = 0 \right\} . \]
\end{defn}

Note that a CVI $L$ of weight $-2k$ has rank $r\leq 2k$ and that $r=1$ if and only if $L$ is pointwise conformally invariant.

We now reformulate and prove Theorem~\ref{thm:existence}.

\begin{thm}
 \label{thm:rank_existence}
 Let $L$ be a CVI of weight $-2k$ and rank $r$.  Then there is an $(r-1)$-differential operator associated to $L$, in the sense of Definition~\ref{defn:associated}.  Moreover, for any integer $0\leq j<r-1$, there is no $j$-differential operator associated to $L$.
\end{thm}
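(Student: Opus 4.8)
The plan is to combine Proposition~\ref{prop:invariant_operator} and Proposition~\ref{prop:critical_invariant_operator} into the single assertion that there is an $\ell$-differential operator associated to $L$ if and only if $L_{\ell+1}^{\ell+1}=0$, and then to read off both conclusions from Definition~\ref{defn:rank}. The point to keep in mind throughout is that ``associated'' (Definition~\ref{defn:associated}) packages together three requirements --- conformal covariance, formal self-adjointness, and recovery of $L$ in \emph{both} the noncritical form~\eqref{eqn:recover_noncritical} (valid for $n>2k$) and the critical form~\eqref{eqn:recover_critical} (valid for $n=2k$) --- so that what is being produced is one natural $\ell$-differential operator that works simultaneously in every dimension $n\geq 2k$.

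To establish the equivalence I would argue two directions. For ``only if'', an associated operator $D$ satisfies~\eqref{eqn:recover_noncritical}, which is precisely~\eqref{eqn:covariant_operator_normalization}, so the forward implication of Proposition~\ref{prop:invariant_operator} yields $L_{\ell+1}^{\ell+1}=0$ for each $n>2k$; because $L_{\ell+1}^{\ell+1}$ is a natural operator --- given by a single universal formula whose coefficients are rational in $n$ --- its vanishing on this infinite set of dimensions forces it to vanish as a natural operator. For ``if'', assuming $L_{\ell+1}^{\ell+1}=0$ I would take $D=\tfrac{1}{\ell!}L_\ell^{\ell+1}$, which is natural by Lemma~\ref{lem:Lj_simple_properties} and formally self-adjoint by Proposition~\ref{prop:Lj_sym}. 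The backward implication of Proposition~\ref{prop:invariant_operator} makes this same operator conformally covariant and confirms~\eqref{eqn:recover_noncritical} when $n>2k$, while the backward implication of Proposition~\ref{prop:critical_invariant_operator} gives conformal covariance and~\eqref{eqn:critical_invariant_operator_normalization} when $n=2k$; dividing the latter by $\ell!$ turns it into~\eqref{eqn:recover_critical}. Hence the one operator $D$ is associated to $L$. I expect the only genuine friction here to be this gluing of the two dimension regimes, together with tracking the $\ell!$ discrepancy between~\eqref{eqn:critical_invariant_operator_normalization} and~\eqref{eqn:recover_critical}; the substantive analysis (formal self-adjointness and the covariance criterion) has already been carried out in the two propositions and in Proposition~\ref{prop:Lj_sym}. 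The remaining rank-one case $\ell=0$ is not covered by the propositions, which assume $\ell\geq1$, but there the equivalence is immediate from Lemma~\ref{lem:cvi_linearization} with $D=L$.

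Granting the equivalence, both halves of the theorem are pure arithmetic of the rank. By Lemma~\ref{lem:finite_rank} the set $\{m\in\bN : L_m^m=0\}$ is nonempty, so its infimum $r$ is a minimum and $L_r^r=0$; choosing $\ell=r-1$ gives $L_{\ell+1}^{\ell+1}=L_r^r=0$, and the equivalence produces an $(r-1)$-differential operator associated to $L$. For the second half, fix an integer $0\leq j<r-1$, so that $j+1<r$; minimality of $r$ gives $L_{j+1}^{j+1}\neq0$, and the equivalence then rules out any $j$-differential operator associated to $L$. Observe that only the values $L_m^m\neq0$ for $m<r$ and $L_r^r=0$ enter, so no monotonicity of $m\mapsto L_m^m$ past $m=r$ is required.
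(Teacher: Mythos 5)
Your proposal is correct and follows essentially the same route as the paper, whose proof is exactly the observation that Propositions~\ref{prop:invariant_operator} and~\ref{prop:critical_invariant_operator} together characterize the existence of an associated $\ell$-differential operator by the condition $L_{\ell+1}^{\ell+1}=0$, after which everything follows from Definition~\ref{defn:rank}. You additionally spell out details the paper leaves implicit (the $\ell!$ normalization, the gluing of the critical and noncritical regimes, and the $\ell=0$ case), all of which check out.
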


\begin{proof}
 Observe that Proposition~\ref{prop:invariant_operator} and Proposition~\ref{prop:critical_invariant_operator} together state that there exists a formally self-adjoint, conformally covariant $\ell$-linear operator associated to $L$ if and only if $L_{\ell+1}^{\ell+1}=0$.  The conclusion follows immediately from Definition~\ref{defn:rank}.
\end{proof}

We conclude this section by identifying a new conformal primitive for any CVI in the case of critical dimension.

\begin{proof}[Proof of Corollary~\ref{cor:critical_primitive}]
 It follows from~\eqref{eqn:critical_invariance} that
 \begin{align*}
  \mS(e^{2u}g) & = \sum_{j=0}^{2k-1}\frac{1}{j!}\int_0^1\int_M \dot u_tL_j^j(u_t,\dotsc,u_t)\,\dvol\,dt \\
  & = \sum_{j=0}^{2k-1} \frac{1}{(j+1)!}\int_0^1\int_M \frac{\partial}{\partial t}\left(u_tL_j^j(u_t,\dotsc,u_t)\right)\,\dvol\,dt \\
  & = \sum_{j=0}^{2k-1} \frac{1}{(j+1)!}\int_M uL_j^j(u,\dotsc,u)\,\dvol,
 \end{align*}
 where the second equality uses the symmetry of $L_j^j$ and the third equality uses $(u_0,u_1)=(0,u)$.
\end{proof}
\section{A brief discussion of known examples}
\label{sec:weight}

In this section we discuss three special families of CVIs which are of particular interest in the literature: pointwise conformal invariants, Branson's $Q$-curvatures~\cite{Branson1995}, and the renormalized volume coefficients~\cite{Graham2000}.

\subsection{Conformal invariants}
\label{sec:weight/1}

Let $L$ be a CVI of weight $-2k$.  By definition, $L$ has rank $1$ if and only if it is conformally invariant.  The Dirichlet form~\eqref{eqn:dirichlet-form} associated to $L$ recovers the well-known conformally invariant map
\[ \mE[2k-n] \ni u \mapsto \int_{M^n} Lu\,\dvol . \]
\subsection{Branson's $Q$-curvature}
\label{sec:weight/2}

For each $k\in\bN$, the GJMS construction~\cite{GJMS1992} produces a conformally covariant operator $L_{2k}$ of weight $-2k$ on any Riemannian manifold of dimension $n\geq 2k$.  Moreover, these operators are formally self-adjoint~\cite{FeffermanGraham2013,GrahamZworski2003,Juhl2013}.  It follows from Theorem~\ref{thm:rank_existence} that the CVI $Q_{2k}:=\frac{2}{n-2k}L_{2k}(1)$, defined in dimension $n\geq2k$ by analytic continuation in the dimension~\cite{Branson1995} (see also~\cite{FeffermanGraham2002}), associated to $L_{2k}$ is a CVI of weight $-2k$ and rank $2$.  Moreover, the Dirichlet form~\eqref{eqn:dirichlet-form} associated to $Q_{2k}$ is the energy of $L_{2k}$.
\subsection{Renormalized volume coefficients}
\label{sec:weight/renormalized}

Recall that given any Riemannian manifold $(M^n,g)$, there is a \emph{Poincar\'e metric}
\[ g_+ = r^{-2}\left( dx^2 + g_r\right) \]
on $X:=M\times[0,\varepsilon)$, where $g_r$ is a one-parameter family of Riemannian metrics on $M\approx M\times\{0\}$ such that $g_0=g$ and $\Ric(g_+)+ng_+=O(r^{n-2})$ and $\tr_g \iota^\ast\left(\Ric(g_+)+ng_+\right)=O(r^n)$ for $\iota\colon M\to X$ the inclusion map.  In fact, the metric $g_+$ depends only on the conformal class of $g$ and is uniquely determined up to order $r^n$ modulo diffeomorphism; see~\cite{FeffermanGraham2012} for details.  Moreover, the expansion
\begin{equation}
 \label{eqn:poincare_expansion}
 g_r = g_{(0)} + r^2g_{(2)} + r^4g_{(4)} + \dotsm
\end{equation}
of $g_r$ near $r=0$ is even up to order $n$, the terms $g_{(2k)}$, $0\leq k< n/2$, are locally determined by $g$, and $\tr_g \partial_r^ng_r=0$.  This is related to~\eqref{eqn:ambient_metric} via the change of variables $\rho=-r^2/2$.

It follows from~\eqref{eqn:poincare_expansion} and our discussion of its asymptotics that
\[ \left(\frac{\det g_r}{\det g}\right)^{1/2} = \sum_{k=0}^{\lfloor n/2\rfloor} (-2)^{-k}v_kr^{2k} + o(r^{n}) , \]
where each of the terms $v_k$, $0\leq k\leq n/2$, is locally determined by $g$.  These are the \emph{renormalized volume coefficients}~\cite{Graham2000}.  It is known~\cite{ChangFang2008,Graham2009} that the $k$-th renormalized volume coefficient $v_k$ is a CVI of weight $-2k$ and that if $k\leq 2$ or $g$ is locally conformally flat, then $v_k$ and $\sigma_k$ agree.  Note that
\[ v_3 = \frac{1}{6}J^3 - \frac{1}{2}J\lv P\rv^2 + \frac{1}{3}\tr P^3 + \frac{1}{3(n-4)}\lp B,P\rp . \]

Fix $k\in\bN$ and let $(M^n,g)$, $n>2k$, be a Riemannian manifold.  Case and Wang~\cite{CaseWang2016s} showed that if $k\leq 2$ or $g$ is locally conformally flat, then there is a formally self-adjoint conformally covariant polydifferential operator $L_{2k}\colon\bigl(C^\infty(M)\bigr)^{2k-1}\to C^\infty(M)$ such that
\begin{equation}
 \label{eqn:casewang_definition}
 L_{2k}(u,\dotsc,u) = \left(\frac{n-2k}{2k}u\right)^{2k-1}u^{\frac{4k^2}{n-2k}}\sigma_k^{g_u}
\end{equation}
for any positive $u\in C^\infty(M)$, where $g_u:=u^{\frac{4k}{n-2k}}g$.  Indeed, one simply defines $L(u)$ by the right-hand side of~\eqref{eqn:casewang_definition} and checks that the result is homogeneous of degree $2k-1$ in $u$.  In the remainder of this section, we deduce a formula for $L_{2k}$ on flat manifolds which is manifestly formally self-adjoint.  In Section~\ref{sec:ambient} we show, among other things, that this gives rise to the operator associated to $v_k$ when $k\leq3$.

Before giving the expression for $L_{2k}$, we first identify a useful basis of formally self-adjoint polydifferential operators.  These operators are specified in terms of the $j$-Hessian $\sigma_j(\nabla^2u^2)$, $j\leq k-1$, rather than the $\sigma_j$-curvatures of $g_u$, because of the the nice divergence structure of the $j$-Hessian in flat manifolds~\cite{Reilly1973}.

In order to more succinctly write this computation, set
\begin{equation}
 \label{eqn:polarized_defn}
 \begin{split}
 \sigma_j(u_1,\dotsc,u_{2j}) & := \frac{1}{j!}\delta_{i_1\dotsm i_j}^{\ell_1\dotsm\ell_j}(u_1u_2)_{\ell_1}^{i_1}\dotsm(u_{2j-1}u_{2j})_{\ell_j}^{i_j} , \\
 \left(T_j(u_1,\dotsc,u_{2j})\right)_i^\ell & := \frac{1}{j!}\delta_{ii_1\dotsm i_j}^{\ell\ell_1\dotsm\ell_j}(u_1u_2)_{\ell_1}^{i_1}\dotsm(u_{2j-1}u_{2j})_{\ell_j}^{i_j} , \\
 N_j(u_1,\dotsc,u_{2j}) & := \lp\nabla u_1,\nabla u_2\rp\dotsm\lp\nabla u_{2j-1},\nabla u_{2j}\rp ,
 \end{split}
\end{equation}
where
\[ \delta_{i_1\dotsm i_j}^{\ell_1\dotsm\ell_j} = \begin{cases}
  1, & \text{if $\ell_1,\dotsm,\ell_j$ are distinct and an even permutation of $i_1,\dotsm,i_j$}, \\
  -1, & \text{if $\ell_1,\dotsm,\ell_j$ are distinct and an odd permutation of $i_1,\dotsm,i_j$}, \\
  0, & \text{otherwise}
 \end{cases} \]
is the generalized Kronecker delta.  Note that, up to a multiplicative constant, $\sigma_j(u_1,\dotsc,u_{2j})$ is a partial polarization of the $j$-Hessian of $u^2$, that $T_j(u_1,\dotsc,u_{2j})$ is a partial polarization of the $j$-th Newton tensor of the Hessian of $u^2$, and that $N_j$ is a partial polarization of $\lv\nabla u\rv^{2j}$.

\begin{lem}
 \label{lem:vk_basis}
 Let $j,k\in\bN_0$ with $j\leq k-1$ and let $(M^n,g)$ be a flat manifold.  Define $D_j^k\colon\bigl(C^\infty(M)\bigr)^{2k-1}\to C^\infty(M)$ by
 \begin{align*}
  & D_j^k(u_1,\dotsc,u_{2k-1}) = \frac{1}{(2k-1)!} \\
  & \quad \times \sum_{\sigma\in S_{2k-1}} \biggl\{ \frac{u_{\sigma(1)}}{k-j}\delta\left(T_{j-1}(u_{\sigma(2)},\dotsc,u_{\sigma(2j-1)})(\nabla N_{k-j}(u_{\sigma(2j)},\dotsc,u_{\sigma(2k-1)}))\right) \\
  & \quad - \delta\left(N_{k-j-1}(u_{\sigma(1)},\dotsc,u_{\sigma(2k-2j-2)})\sigma_j(u_{\sigma(2k-2j-1)},\dotsc,u_{\sigma(2k-2)})\,du_{\sigma(2k-1)}\right) \biggr\}
 \end{align*}
 for all $u_1,\dotsc,u_{2k-1}\in C^\infty(M)$, where $S_{2k-1}$ is the symmetric group on $2k-1$ variables.  Then $D_j^k$ is formally self-adjoint.
\end{lem}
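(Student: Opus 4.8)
The plan is to prove formal self-adjointness by exhibiting the associated $(2k)$-linear form
\[ \kD(u_0,u_1,\dotsc,u_{2k-1}) := \int_M u_0\,D_j^k(u_1,\dotsc,u_{2k-1})\,\dvol \]
as a constant multiple of a manifestly fully symmetric expression. Since $D_j^k$ is averaged over $S_{2k-1}$, the form $\kD$ is automatically symmetric in $u_1,\dotsc,u_{2k-1}$, so it suffices to produce full symmetry in all $2k$ arguments. I would set $G := \int_M N_{k-j}(u_0,u_1,\dotsc,u_{2k-2j-2},u_{2k-1})\,\sigma_j(u_{2k-2j-1},\dotsc,u_{2k-2})\,\dvol$ and $H := \int_M \sigma_j(u_0,u_1,\dotsc,u_{2j-1})\,N_{k-j}(u_{2j},\dotsc,u_{2k-1})\,\dvol$, write $\Sym'$ for the average over permutations of $u_1,\dotsc,u_{2k-1}$ (with $u_0$ fixed) and $\Sym''$ for the average over permutations of all of $u_0,\dotsc,u_{2k-1}$, and aim to show $\kD = \frac{k}{k-j}\,\Sym''[G]$.

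First I would integrate by parts in each of the two families of terms, using that $\delta$ is (minus) the divergence, so that integrating $u_0\,\delta(\cdot)$ and applying the divergence theorem moves $\delta$ onto $u_0$. In the second term, pairing $\nabla u_0$ with $du_{\sigma(2k-1)}$ produces one more gradient inner product, absorbing $N_{k-j-1}$ into $N_{k-j}$ and yielding exactly $\Sym'[G]$. In the first term a double integration by parts is needed: after the first step one is left with $\langle T_{j-1}(\nabla(u_0u_{\sigma(1)})),\nabla N_{k-j}\rangle$ (using that the polarized Newton tensor is a symmetric endomorphism), and the second step converts this to $\langle\divsymb(T_{j-1}(\nabla(u_0u_{\sigma(1)}))),N_{k-j}\rangle$. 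Here I would invoke the divergence structure of the Newton tensors in flat space~\cite{Reilly1973}: the polarized $T_{j-1}$ is divergence-free, so $\divsymb(T_{j-1}(\nabla f))=\langle T_{j-1},\nabla^2 f\rangle$, and the polarization identity $\langle T_{j-1}(v_1,\dotsc,v_{2j-2}),\nabla^2(ab)\rangle = j\,\sigma_j(v_1,\dotsc,v_{2j-2},a,b)$ then identifies the first term with $\frac{j}{k-j}\,\Sym'[H]$. Thus $\kD = \Sym'[G] + \frac{j}{k-j}\Sym'[H]$, where the coefficient $\tfrac1{k-j}$ in the definition is precisely what is needed for the constants to line up in the next step.

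Finally I would establish the purely combinatorial identity
\[ \Sym''[G] = \frac{k-j}{k}\,\Sym'[G] + \frac{j}{k}\,\Sym'[H] \]
by splitting the average $\Sym''$ according to whether the distinguished argument $u_0$ lands in one of the $2(k-j)$ slots of $N_{k-j}$ or one of the $2j$ slots of $\sigma_j$; pinning $u_0$ in an $N$-slot and averaging the remaining arguments gives $\Sym'[G]$ (independently of which $N$-slot, since all remaining functions are averaged over all remaining slots), while pinning it in a $\sigma$-slot gives $\Sym'[H]$. Comparing with the expression for $\kD$ from the previous paragraph yields $\kD = \frac{k}{k-j}\,\Sym''[G]$, which is symmetric in all $2k$ arguments, so $D_j^k\in\FSA_{2k-1}$.

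The main obstacle I anticipate is the double integration by parts in the first family of terms: it depends on the flat-space divergence-free property of the \emph{polarized} Newton tensor and on fixing the exact constant in the polarization identity relating $\langle T_{j-1},\nabla^2(ab)\rangle$ to $\sigma_j$, and one must confirm that flatness genuinely eliminates the curvature and commutator terms that would otherwise appear when commuting covariant derivatives. The remaining combinatorial bookkeeping is routine once this identity is in hand, and the edge case $j=0$ (where the first term is absent and $\sigma_0\equiv 1$) should be checked to be consistent with the convention $T_{-1}=0$.
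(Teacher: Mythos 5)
Your proposal is correct and follows essentially the same route as the paper's proof: integrate by parts in both families of terms (using the symmetry and flat-space divergence-freeness of the polarized Newton tensor, and the polarization identity $\lp T_{j-1},\nabla^2(ab)\rp=j\,\sigma_j(\dotsc,a,b)$), then split the full symmetrization over $S_{2k}$ according to whether $u_0$ lands in an $N_{k-j}$-slot or a $\sigma_j$-slot, arriving at the same manifestly symmetric expression $\frac{k}{k-j}\Sym''[G]$ (equation~\eqref{eqn:Djk_polarized_energy} in the paper).
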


\begin{proof}
 Let $S_{2k-1}$ be the symmetric group on $\{1,\dotsc,2k-1\}$ and let $S_{2k}$ be the symmetric group on $\{0,\dotsc,2k-1\}$.  Thus
 \[ S_{2k} = \bigsqcup_{j=0}^{2k-1} (0\; j)S_{2k-1} , \]
 where $(0\;j)\in S_{2k}$ is the transposition switching $0$ and $j$.
 On the one hand, we deduce from the symmetries of~\eqref{eqn:polarized_defn} that
 \begin{align*}
  \MoveEqLeft[2] \sum_{\sigma\in S_{2k}} \sigma_j(u_{\sigma(0)},\dotsc,u_{\sigma(2j-1)})N_{k-j}(u_{\sigma(2j)},\dotsc,u_{\sigma(2k-1)}) \\
   & = 2j\sum_{\sigma\in S_{2k-1}} \sigma_j(u_0,u_{\sigma(1)},\dotsc,u_{\sigma(2j-1)})N_{k-j}(u_{\sigma(2j)},\dotsc,u_{\sigma(2k-1)}) \\
    & \quad + 2(k-j)\sum_{\sigma\in S_{2k-1}} N_j(u_0,u_{\sigma(1)},\dotsc,u_{\sigma(2k-2j-1)})\sigma_j(u_{\sigma(2k-2j)},\dotsc,u_{\sigma(2k-1)}) .
 \end{align*}
 On the other hand, a straightforward integration-by-parts yields
 \begin{multline*}
  -\int_M u_0\delta\left(N_{k-j-1}(u_1,\dotsc,u_{2k-2j-2})\sigma_j(u_{2k-2j-1},\dotsc,u_{2k-2})\,du_{2k-1}\right) \\ = \int_M N_{k-j}(u_0,u_{2k-1},u_1,u_2,\dotsc,u_{2k-2j-2})\sigma_j(u_{2k-2j-1},\dotsc,u_{2k-2}) .
 \end{multline*}
 and
 \begin{multline*}
  \sum_{\sigma\in S_{2k-1}} \int_M u_{0}u_{\sigma(1)} \delta\left(T_{j-1}(u_{\sigma(2)},\dotsc,u_{\sigma(2j-1)})(\nabla N_{k-j}(u_{\sigma(2j)},\dotsc,u_{\sigma(2k-1)}))\right) \\ = j\sum_{\sigma\in S_{2k-1}} \int_M \sigma_j(u_0,u_{\sigma(1)},\dotsc,u_{\sigma(2j-1)}) N_{k-j}(u_{\sigma(2j)},\dotsc,u_{\sigma(2k-1)}) ,
 \end{multline*}
 where the last identity uses the fact that the Newton tensors of the $j$-Hessian are all divergence-free on flat manifolds~\cite{Reilly1973}.  Combining the above three displays yields
 \begin{multline}
  \label{eqn:Djk_polarized_energy}
  \int_M u_0D_j^k(u_1,\dotsc,u_{2k-1}) \\ = \frac{k}{(k-j)(2k)!}\sum_{\sigma\in S_{2k}}\int_M \sigma_j(u_{\sigma(0)},\dotsc,u_{\sigma(2j-1)})N_{k-j}(u_{\sigma(2j)},\dotsc,u_{\sigma(2k-1)}) .
 \end{multline}
 In particular, $D_j^k$ is formally self-adjoint.
\end{proof}

For the remainder of this section it suffices to consider $D_j^k$ on the diagonal; i.e.\ we need only the operators
\[ D_j^k(u) = u\delta\left(\lv\nabla u\rv^{2k-2j-2}T_{j-1}(\nabla^2u^2)(\nabla\lv\nabla u\rv^2)\right) - \delta\left(\lv\nabla u\rv^{2k-2j-2}\sigma_j(\nabla^2u^2)\,du\right) \]
for $0\leq j\leq k-1$.  We begin by rewriting $D_j^k$ in such a way that it is clearly a second-order polydifferential operator.

\begin{lem}
 \label{lem:Djk_rewrite}
 Let $(M^n,g)$ be a flat manifold and let $k\in\bN_0$.  Then
 \[ D_0^k(u) = -\delta\left(\lv\nabla u\rv^{2k-2}\,du\right) \]
 is the $2k$-Laplacian.  Additionally, for any $1\leq j\leq k-1$ it holds that
 \begin{align*}
  D_j^k(u) & = -\frac{2k-j-1}{2}u^{-1}\lv\nabla u\rv^{2k-2j-2}\sigma_{j+1} - (2k-j+1)u^{-1}\lv\nabla u\rv^{2k-2j}\sigma_j \\
   & \quad + (k-j-1)u^{-1}\lv\nabla u\rv^{2k-2j-4}T_{j+1}(\nabla u,\nabla u) \\
   & \quad + 4(k-j)u^{-1}\lv\nabla u\rv^{2k-2j-2}T_j(\nabla u,\nabla u) \\
   & \quad + 4(k+1-j)u^{-1}\lv\nabla u\rv^{2k-2j}T_{j-1}(\nabla u,\nabla u) ,
 \end{align*}
 where $\sigma_j:=\sigma_j(\nabla^2u^2)$ and $T_j:=T_j(\nabla^2u^2)$.
\end{lem}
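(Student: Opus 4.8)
The plan is to reduce both assertions to the algebra of Newton tensors on a flat manifold, applied to the Hessian of $u^2$. Throughout write $A:=\nabla^2u^2$, so that $A = 2\,du\otimes du + 2u\,\nabla^2u$. Taking the gradient of $\lv\nabla u\rv^2$ gives $\nabla\lv\nabla u\rv^2 = 2(\nabla^2u)(\nabla u)$, and applying $A$ to $\nabla u$ then yields the algebraic identity
\[ u\,\nabla\lv\nabla u\rv^2 = A(\nabla u) - 2\lv\nabla u\rv^2\,\nabla u , \]
which I will use to absorb the awkward factors of $u$. The case $j=0$ is immediate: with the conventions $\sigma_0=1$ and $T_{-1}=0$ the first term in the definition of $D_0^k$ vanishes and the second is exactly $-\delta\bigl(\lv\nabla u\rv^{2k-2}\,du\bigr)$, the $2k$-Laplacian.

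For $1\le j\le k-1$ the three facts I would isolate first are properties of the Newton tensors $T_i=T_i(A)$ of the single endomorphism $A$: Reilly's identity $\delta T_i = 0$ on flat manifolds~\cite{Reilly1973}, the recursion $A\,T_{i-1} = \sigma_i\Id - T_i$, and the trace identity $\tr(A\,T_i)=(i+1)\sigma_{i+1}$. First I would move the leading factor $u$ inside the divergence in the first term of $D_j^k$, at the cost of a bulk term $\lp\nabla u,\ \lv\nabla u\rv^{2k-2j-2}T_{j-1}(\nabla\lv\nabla u\rv^2)\rp$; combining the two resulting divergences and applying the displayed identity together with the recursion collapses $u\,T_{j-1}(\nabla\lv\nabla u\rv^2)-\sigma_j\nabla u$ into $-T_j(\nabla u)-2\lv\nabla u\rv^2\,T_{j-1}(\nabla u)$. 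This rewrites $D_j^k(u)$ as a sum of two terms of the schematic form $\delta\bigl(\lv\nabla u\rv^{2m}T_i(\nabla u)\bigr)$ plus the single undifferentiated quadratic-form term.

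The remaining work is to expand each $\delta\bigl(\lv\nabla u\rv^{2m}T_i(\nabla u)\bigr)$ by the product rule, discarding the divergence of $T_i$ via $\delta T_i=0$ and producing the scalar $\tr(T_i\nabla^2u)$ and the quadratic form $(T_i\nabla^2u)(\nabla u,\nabla u)$; substituting $\nabla^2u = \tfrac{1}{2u}(A-2\,du\otimes du)$ and again invoking the recursion and the trace identity converts every such quantity into $\sigma_i$, $\sigma_{i+1}$ and the quadratic forms $T_{i-1}(\nabla u,\nabla u)$, $T_i(\nabla u,\nabla u)$, $T_{i+1}(\nabla u,\nabla u)$, each carrying a factor $u^{-1}$. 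Collecting the contributions of the two divergence terms (with $i\in\{j-1,j\}$) and of the quadratic-form term then produces the stated linear combination. I expect the main obstacle to be purely the bookkeeping in this last step: several target terms (for instance $\sigma_j$ and $T_{j-1}(\nabla u,\nabla u)$, and especially $T_j(\nabla u,\nabla u)$) receive contributions from more than one source, so the numerical coefficients must be tracked carefully, and one must fix the sign convention for $\delta$ at the outset so that the $2k$-Laplacian normalization of the $j=0$ case is respected consistently.
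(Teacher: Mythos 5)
Your proposal follows essentially the same route as the paper's proof: the same Newton-tensor recursion $T_{i-1}\circ\nabla^2u^2=\sigma_i g-T_i$, the Hessian substitution $\nabla^2u=\tfrac{1}{2}u^{-1}\nabla^2u^2-u^{-1}du\otimes du$ (equivalently your identity $u\,\nabla\lv\nabla u\rv^2=\nabla^2u^2(\nabla u)-2\lv\nabla u\rv^2\nabla u$), and Reilly's $\delta T_i=0$ are used first to collapse the two divergences into terms of the form $\delta\bigl(\lv\nabla u\rv^{2m}T_i(\nabla u)\bigr)$ with $i\in\{j-1,j\}$ and then to expand those by the product rule and the trace identity. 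The only part left implicit is the final collection of coefficients, which the paper carries out by exactly the bookkeeping you describe.
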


\begin{proof}
 Since $T_{j-1}\circ\nabla^2u^2=\sigma_jg-T_j$ and $\nabla^2u=\frac{1}{2}u^{-1}\nabla^2u^2-u^{-1}du\otimes du$, it holds that
 \[ uT_{j-1}(\nabla\lv\nabla u\rv^2) = \sigma_j\,du - T_j(\nabla u) - 2\lv\nabla u\rv^2T_{j-1}(\nabla u) . \]
 Therefore $A_j^k(u):=u\delta\left(\lv\nabla u\rv^{2k-2j-2}T_{j-1}(\nabla\lv\nabla u\rv^2)\right)$ is such that
 \begin{align*}
  A_j^k(u) & = u\delta\left(u^{-1}\lv\nabla u\rv^{2k-2j-2}\left(\sigma_j\,du - T_j(\nabla u) - 2\lv\nabla u\rv^2T_{j-1}(\nabla u)\right)\right) \\
  & = \delta\left(\lv\nabla u\rv^{2k-2j-2}\sigma_j\,du\right) - u^{-1}\lv\nabla u\rv^{2k-2j}\sigma_j \\
   & \quad - \delta\left(\lv\nabla u\rv^{2k-2j-2}\left(T_j(\nabla u) + 2\lv\nabla u\rv^2T_{j-1}(\nabla u)\right)\right) \\
   & \quad + u^{-1}\lv\nabla u\rv^{2k-2j-2}T_j(\nabla u,\nabla u) + 2u^{-1}\lv\nabla u\rv^{2k-2j}T_{j-1}(\nabla u,\nabla u) .
 \end{align*}
 Using again the fact that $\nabla^2u=\frac{1}{2}u^{-1}\nabla^2u^2-u^{-1}du\otimes du$, we conclude that
 \begin{align*}
  \MoveEqLeft[2] \delta\left(\lv\nabla u\rv^{2k-2j-2}T_j(\nabla u)\right) \\
   & = (k-j-1)\lv\nabla u\rv^{2k-2j-4}T_j(\nabla u,\nabla\lv\nabla u\rv^2) + \lv\nabla u\rv^{2k-2j-2}\lp T_j,\nabla^2u\rp \\
   & = \frac{2k-j-1}{2}u^{-1}\lv\nabla u\rv^{2k-2j-2}\sigma_{j+1} - (k-j-1)u^{-1}\lv\nabla u\rv^{2k-2j-4}T_{j+1}(\nabla u,\nabla u) \\
    & \quad - (2k-2j-1)u^{-1}\lv\nabla u\rv^{2k-2j-2}T_j(\nabla u,\nabla u) . 
 \end{align*}
 Combining the previous two displays with the definition of $D_j^k$ yields the desired result.
\end{proof}

The following properties of the polarizations of $\sigma_j$ and $T_j$ will help us to express the operator~\eqref{eqn:casewang_definition} in terms of linear combinations of the operators $D_j^k$.  Here we denote by
\[ \sigma_{k,j}(A,B) := \frac{1}{k!}\delta_{i_1\dotsm i_k}^{\ell_1\dotsm \ell_k}A_{\ell_1}^{i_1}\dotsm A_{\ell_j}^{i_j} B_{\ell_{j+1}}^{i_{j+1}} \dotsm B_{\ell_k}^{i_k} \]
the evaluation of the polarization of $\sigma_k$ at $j$ copies of $A$ and $k-j$ copies of $B$.

\begin{lem}
 \label{lem:esp_props}
 Let $V$ be an $n$-dimensional inner product space.  Given any symmetric $A,B\in\End(V)$ and any scalar $f$, it holds that
 \begin{equation}
  \label{eqn:foil}
  \sigma_k(A+fB) = \sum_{j=0}^k\binom{k}{j}f^{k-j}\sigma_{k,j}(A,B) .
 \end{equation}
 If $B=I$ is the identity element, then
 \begin{equation}
  \label{eqn:foil_identity}
  \sigma_k(A+fI) = \sum_{j=0}^k\binom{n-k+j}{j}f^j\sigma_{k-j}(A) 
 \end{equation}
 If instead $B$ has rank at most one, then
 \begin{equation}
  \label{eqn:foil_rank1}
  \sigma_k(A+fB) = \sigma_k(A) + \lp T_{k-1}(A), fB\rp . 
 \end{equation}
\end{lem}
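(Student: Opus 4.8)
The plan is to treat all three identities as consequences of viewing $\sigma_k$ as the diagonal of a symmetric $k$-linear form on $\End(V)$ and then exploiting the total antisymmetry of the generalized Kronecker delta. First I would prove~\eqref{eqn:foil} by the multilinear binomial theorem. Writing
\[ \sigma_k(A+fB) = \frac{1}{k!}\delta_{i_1\dotsm i_k}^{\ell_1\dotsm\ell_k}(A+fB)_{\ell_1}^{i_1}\dotsm(A+fB)_{\ell_k}^{i_k} \]
and expanding the product, each resulting term selects a subset of the $k$ slots to carry $A$ and the complementary slots to carry $fB$. Because $\delta_{i_1\dotsm i_k}^{\ell_1\dotsm\ell_k}$ is separately antisymmetric in its upper and in its lower indices, it is invariant under any simultaneous permutation of the index pairs $(i_a,\ell_a)$; hence every term with exactly $j$ factors of $A$ contributes the same quantity $\sigma_{k,j}(A,B)$ from~\eqref{eqn:polarized_defn}. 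Collecting the $\binom{k}{j}$ such terms, each carrying the scalar factor $f^{k-j}$, yields~\eqref{eqn:foil}.

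For~\eqref{eqn:foil_identity} I would specialize $B=I$ in~\eqref{eqn:foil} and evaluate $\sigma_{k,j}(A,I)$ using the standard trace identity for the generalized Kronecker delta,
\[ \delta_{i_1\dotsm i_j a_1\dotsm a_{k-j}}^{\ell_1\dotsm\ell_j a_1\dotsm a_{k-j}} = \frac{(n-j)!}{(n-k)!}\,\delta_{i_1\dotsm i_j}^{\ell_1\dotsm\ell_j} , \]
which contracts away the $k-j$ identity factors and shortens the delta. This gives $\sigma_{k,j}(A,I)=\tfrac{j!\,(n-j)!}{k!\,(n-k)!}\,\sigma_j(A)$. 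Multiplying by the coefficient $\binom{k}{j}f^{k-j}$ from~\eqref{eqn:foil}, the factor $\binom{k}{j}\tfrac{j!}{k!}=\tfrac{1}{(k-j)!}$ simplifies the prefactor to $\tfrac{(n-j)!}{(k-j)!\,(n-k)!}$; after the reindexing $m=k-j$ this is exactly $\binom{n-k+m}{m}$, which is~\eqref{eqn:foil_identity} (with summation variable renamed).

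Finally, for~\eqref{eqn:foil_rank1} I would again start from~\eqref{eqn:foil} but observe that a rank-at-most-one $B$ annihilates every term carrying two or more factors of $B$: writing $B_\ell^i = c^i d_\ell$, any two such factors produce $c^{i_a}c^{i_b}$, symmetric under $i_a\leftrightarrow i_b$, which is killed by the antisymmetric delta. Thus only the $j=k$ and $j=k-1$ terms survive, giving $\sigma_k(A+fB)=\sigma_k(A)+kf\,\sigma_{k,k-1}(A,B)$. I would then identify $k\,\sigma_{k,k-1}(A,B)=\lp T_{k-1}(A),B\rp$, where $\lp\,\cdot\,,\cdot\,\rp$ is the trace pairing and $T_{k-1}(A)$ is the Newton tensor $(T_{k-1}(A))_i^\ell=\tfrac{1}{(k-1)!}\delta_{ii_1\dotsm i_{k-1}}^{\ell\ell_1\dotsm\ell_{k-1}}A_{\ell_1}^{i_1}\dotsm A_{\ell_{k-1}}^{i_{k-1}}$ consistent with~\eqref{eqn:polarized_defn}: contracting $T_{k-1}(A)$ against $B$ and moving the single $B$-index pair to the end of the delta (harmless by antisymmetry, since the signs in the upper and lower rows cancel) reproduces the formula for $\sigma_{k,k-1}(A,B)$ up to the normalization $k!/(k-1)!=k$.

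The computations are entirely routine manipulations of the generalized Kronecker delta; the only place demanding care is the combinatorial bookkeeping in~\eqref{eqn:foil_identity}, where the factorials in the contraction identity and the binomial coefficient must be tracked precisely to collapse to the asserted $\binom{n-k+j}{j}$.
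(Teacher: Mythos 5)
Your proof is correct and follows essentially the same route as the paper's: \eqref{eqn:foil} from multilinearity of the polarization, \eqref{eqn:foil_identity} from the contraction identity $\delta_{i_1\dotsm i_k}^{\ell_1\dotsm\ell_k}\delta_{\ell_k}^{i_k}=(n+1-k)\delta_{i_1\dotsm i_{k-1}}^{\ell_1\dotsm\ell_{k-1}}$ giving $\sigma_{k,j}(A,I)=\tfrac{j!(n-j)!}{k!(n-k)!}\sigma_j(A)$, and \eqref{eqn:foil_rank1} from the vanishing of $\sigma_{k,j}(A,B)$ for $j\leq k-2$ together with $\lp T_{k-1}(A),B\rp=k\sigma_{k,k-1}(A,B)$. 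The only difference is that you spell out the routine delta manipulations that the paper leaves implicit.
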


\begin{proof}
 Equation~\eqref{eqn:foil} is an immediate consequence of the multilinearity of the polarization
 \begin{equation}
  \label{eqn:polarization}
  \sigma_k(A_1,\dotsc,A_k) := \frac{1}{k!}\delta_{i_1\dotsm i_k}^{\ell_1\dotsm \ell_k}(A_1)_{\ell_1}^{i_1}\dotsm(A_k)_{\ell_k}^{i_k}
 \end{equation}
 of $\sigma_k$.
 
 From the observation that
 \[ \delta_{i_1\dotsm i_k}^{\ell_1\dotsm\ell_k}\delta_{\ell_k}^{i_k} = (n+1-k)\delta_{i_1\dotsm i_{k-1}}^{\ell_1\dotsm \ell_{k-1}}, \]
 we conclude that $\sigma_{k,j}(A,I)=\frac{(n-j)!j!}{(n-k)!k!}\sigma_j(A)$.  Inserting this into~\eqref{eqn:foil} yields~\eqref{eqn:foil_identity}.
 
 If $B$ has rank at most one, then it is clear from~\eqref{eqn:polarization} that $\sigma_{k,j}(A,B)=0$ for all $0\leq j\leq k-2$.  Combining~\eqref{eqn:foil} and the identity $\lp T_{k-1}(A),B\rp=k\sigma_{k,k-1}(A)$ yields~\eqref{eqn:foil_rank1}.
\end{proof}

We now express $L_{2k}$ in terms of the operators $D_j^k$, $0\leq j\leq k-1$.

\begin{thm}
 \label{thm:vk_operator_flat}
 Let $(M^n,g)$ be a flat manifold.  Fix $n,k\in\bN$ and let $b_j$ be the solution of the recursive relation $b_0=0$ and
 \begin{equation}
  \label{eqn:recursive_relation}
  \frac{k+j}{2}b_{j+1} + \frac{(n-2k)(k+j+1)}{2k}b_j = \binom{n-k+j}{j} .
 \end{equation}
 Then
 \begin{equation}
  \label{eqn:vk_operator_flat}
  L_{2k} = -(-2)^{-k}\sum_{j=0}^{k-1}\left(\frac{n-2k}{2k}\right)^jb_{k-j}D_j^k .
 \end{equation}
\end{thm}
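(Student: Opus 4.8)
The plan is to exploit formal self-adjointness to reduce \eqref{eqn:vk_operator_flat} to an identity of diagonal energies, and then to evaluate both sides in the single family of integrals $E_j := \int_M \sigma_j(\nabla^2u^2)\lv\nabla u\rv^{2(k-j)}\,\dvol$, $0\leq j\leq k$. Both $L_{2k}$ and each $D_j^k$ are formally self-adjoint $(2k-1)$-differential operators (the former by Case--Wang, the latter by Lemma~\ref{lem:vk_basis}), hence symmetric; a symmetric multilinear operator is determined by its diagonal, so by polarization it suffices to check that the full diagonal energies $\int_M u\,L_{2k}(u,\dotsc,u)\,\dvol$ and $\int_M u\cdot(\text{right-hand side})(u,\dotsc,u)\,\dvol$ agree for every $u$. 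On the $D_j^k$ side this energy is immediate from the diagonal of \eqref{eqn:Djk_polarized_energy}: $\int_M u\,D_j^k(u)\,\dvol = \tfrac{k}{k-j}E_j$.

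The substantive work is the $L_{2k}$ side. Writing $m := \tfrac{2k}{n-2k}$, so that $g_u = e^{2\phi}g$ with $\phi = m\log u$, and using $P=0$ on the flat background, the Schouten tensor of $g_u$ is $\hat P = u^{-2}M$, where $M = -\tfrac m2\nabla^2u^2 + (2m+m^2)\,du\otimes du - \tfrac{m^2}{2}\lv\nabla u\rv^2 g$, after inserting $\nabla^2 u = \tfrac12 u^{-1}\nabla^2u^2 - u^{-1}du\otimes du$. Since $\sigma_k$ is homogeneous of degree $k$ in the Schouten endomorphism, \eqref{eqn:casewang_definition} collapses to $L_{2k}(u) = m^{1-2k}u^{-1}\sigma_k(g^{-1}M)$. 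I would then expand $\sigma_k(g^{-1}M)$ by peeling off the pure-trace term $-\tfrac{m^2}{2}\lv\nabla u\rv^2 g$ using \eqref{eqn:foil_identity} and the rank-one term $(2m+m^2)\,du\otimes du$ using \eqref{eqn:foil_rank1}; this writes $\int_M u\,L_{2k}(u)\,\dvol$ as an explicit combination of the $E_p$ and of the auxiliary integrals $F_p := \int_M \lv\nabla u\rv^{2(k-p)}T_{p-1}(\nabla^2u^2)(\nabla u,\nabla u)\,\dvol$, with coefficients assembled from the binomials $\binom{n-k+j}{j}$ and the factor $2m+m^2 = m(2+m)$.

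To land everything in the single basis $\{E_j\}$ I would reduce the $F_p$ to the $E_j$. Using $du\otimes du = \tfrac12\nabla^2u^2 - u\nabla^2 u$, the Newton identity $H\,T_{p-1}(H) = \sigma_p(H)I - T_p(H)$ for $H = \nabla^2u^2$, and the divergence-freeness of the Newton tensors on flat manifolds~\cite{Reilly1973}, a single integration by parts yields a first-order recursion expressing $F_{p+1}$ through $F_p$ and $E_p$, with base case $F_1 = E_0$; hence each $F_p$ lies in the span of $E_0,\dotsc,E_{p-1}$. Combined with the vanishing $E_k = \int_M\sigma_k(\nabla^2u^2)\,\dvol = 0$ (the top Hessian symmetric function being a divergence), this rewrites $\int_M u\,L_{2k}(u)\,\dvol = \sum_{j=0}^{k-1}\gamma_j E_j$ for explicit constants $\gamma_j$.

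The main obstacle is the concluding coefficient bookkeeping: one must verify the relations $\tfrac{k}{k-j}c_j = \gamma_j$, where $c_j = -(-2)^{-k}(\tfrac{n-2k}{2k})^j b_{k-j}$ are the coefficients in \eqref{eqn:vk_operator_flat}, and show that this system is solved exactly by the sequence $b_j$ of \eqref{eqn:recursive_relation} with $b_0=0$. The difficulty is that each $\gamma_j$ superposes the direct $\sigma$-contribution with the entire cascade of $F$-contributions fed in by the reduction recursion, so extracting the clean first-order relation \eqref{eqn:recursive_relation} requires tracking how the binomials and the factor $m(2+m)$ recombine across indices. Note that we only need the two energy polynomials in $\{E_j\}$ to agree coefficientwise, not linear independence of the $E_j$; hence verifying these identities for the $b_j$ defined by \eqref{eqn:recursive_relation} suffices, via self-adjointness and polarization, to establish the operator identity \eqref{eqn:vk_operator_flat}.
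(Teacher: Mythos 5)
Your strategy is sound, and its first half coincides with the paper's: the proof in the paper also starts from \eqref{eqn:casewang_definition}, uses that the background is flat to write the Schouten endomorphism of $g_u$ in terms of $\nabla^2u^2$, $du\otimes du$ and $\lv\nabla u\rv^2g$, and expands $\sigma_k$ of it by peeling off the pure-trace part with \eqref{eqn:foil_identity} and the rank-one part with \eqref{eqn:foil_rank1}, arriving at the pointwise formula \eqref{eqn:L2k_simplify} for $L_{2k}(u)$ in the basis $u^{-1}\lv\nabla u\rv^{2j}\sigma_{k-j}$, $u^{-1}\lv\nabla u\rv^{2j}T_{k-1-j}(\nabla u,\nabla u)$, $u^{-1}\lv\nabla u\rv^{2k}$. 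Where you genuinely diverge is the comparison with the $D_j^k$: the paper never passes to energies. Lemma~\ref{lem:Djk_rewrite} rewrites each diagonal $D_j^k(u)$ pointwise as a combination of exactly the same three families of terms, so $L_{2k}=A$ is verified by matching pointwise coefficients; the recursion \eqref{eqn:recursive_relation} is precisely the matching of the $\sigma$-coefficients, with the $T$- and $\lv\nabla u\rv^{2k}$-coefficients serving as consistency checks. Your route --- reduce to equality of diagonal energies via formal self-adjointness and polarization, read off $\int_M uD_j^k(u)\,\dvol=\tfrac{k}{k-j}E_j$ from \eqref{eqn:Djk_polarized_energy}, and reduce the $F_p$ to the $E_j$ by a Newton-identity/integration-by-parts recursion with $F_1=E_0$ and $E_k=0$ --- is logically valid (equality of the symmetric $2k$-linear energy forms on the diagonal does recover the operator identity, and you are right that linear independence of the $E_j$ is not needed), but it costs you two things. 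It requires the formal self-adjointness of both sides as an input plus a fundamental-lemma step to return from energies to operators, neither of which the pointwise argument needs; and it collapses the paper's three separately matchable coefficient families into the single family $\gamma_j$, each of which absorbs the entire cascade of the $F_p$-reduction. That last verification --- that the $\gamma_j$ produced by your cascade equal $\tfrac{k}{k-j}c_j$ exactly when the $b_j$ satisfy \eqref{eqn:recursive_relation} --- is the actual content of the theorem and is the step you explicitly leave undone, so your proposal is a viable but unfinished alternative; the paper's organization via Lemma~\ref{lem:Djk_rewrite} is the cleaner way to make the recursion drop out.
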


\begin{proof}
 First observe that~\eqref{eqn:foil_identity} implies that
 \begin{multline*}
  L_{2k}(u) = (-2)^{-k}u^{-1}\sum_{j=0}^k\binom{n-k+j}{j}\left(\frac{n-2k}{2k}\right)^{k-1-j} \\ \times \lv\nabla u\rv^{2j}\sigma_{k-j}\left(\nabla^2u^2-\frac{4(n-k)}{n-2k}du\otimes du\right) .
 \end{multline*}
 Applying~\eqref{eqn:foil_rank1}, we deduce that
 \begin{multline*}
  L_{2k}(u) = (-2)^{-k}u^{-1}\Biggl[ \sum_{j=0}^k\binom{n-k+j}{j}\left(\frac{n-2k}{2k}\right)^{k-1-j}\lv\nabla u\rv^{2j}\sigma_{k-j} \\ - \sum_{j=0}^{k-1}\frac{2(n-k)}{k}\binom{n-k+j}{j}\left(\frac{n-2k}{2k}\right)^{k-2-j}\lv\nabla u\rv^{2j}T_{k-1-j}(\nabla u,\nabla u) \Biggr], 
 \end{multline*}
 where we again write $\sigma_j:=\sigma_j(\nabla^2u^2)$ and $T_j:=T_j(\nabla^2u^2)$.  Using the fact that $T_0=\Id$, we conclude that
 \begin{equation}
  \label{eqn:L2k_simplify}
  \begin{split}
   L_{2k}(u) & = (-2)^{-k}u^{-1}\Biggl[ \sum_{j=0}^{k-1}\binom{n-k+j}{j}\left(\frac{n-2k}{2k}\right)^{k-1-j}\lv\nabla u\rv^{2j}\sigma_{k-j} \\
    & \quad - \sum_{j=0}^{k-2}\frac{2(n-k)}{k}\binom{n-k+j}{j}\left(\frac{n-2k}{2k}\right)^{k-2-j}\lv\nabla u\rv^{2j}T_{k-1-j}(\nabla u,\nabla u) \\
    & \quad - 2\binom{n-1}{k-1}\lv\nabla u\rv^{2k} \Biggr] . 
  \end{split}
 \end{equation}

 Now denote
 \[ A := -(-2)^{-k}\sum_{j=0}^{k-1}\left(\frac{n-2k}{2k}\right)^jb_{k-j}D_j^k . \]
 Using Lemma~\ref{lem:Djk_rewrite} we compute that
 \begin{multline*}
  A(u) = (-2)^{-k}u^{-1}\Biggl[ \sum_{j=0}^{k-1} \left(\frac{n-2k}{2k}\right)^{k-1-j}c_{j,k}^{(1)}\lv\nabla u\rv^{2j}\sigma_{k-j} \\
   - \sum_{j=0}^{k-2} \left(\frac{n-2k}{2k}\right)^{k-2-j}c_{j,k}^{(2)}\lv\nabla u\rv^{2j}T_{k-1-j}(\nabla u,\nabla u) - c_k^{(3)}\lv\nabla u\rv^{2k} \Biggr] ,
 \end{multline*}
 where
 \begin{align*}
  c_{j,k}^{(1)} & = \frac{j+k}{2}b_{j+1} + (j+k+1)\left(\frac{n-2k}{2k}\right)b_j, \\
  c_{j,k}^{(2)} & = (j+1)\left(b_{j+2} + 4\left(\frac{n-2k}{2k}\right)b_{j+1} + 4\left(\frac{n-2k}{2k}\right)^2b_j\right) , \\
  c_k^{(3)} & = (2k-1)b_k + 4k\left(\frac{n-2k}{2k}\right)b_{k-1} .
 \end{align*}
 It follows readily from~\eqref{eqn:recursive_relation} that $L_{2k}=A$.
\end{proof}

We conclude this section by proving that the renormalized volume coefficients have maximal rank.

\begin{prop}
 \label{prop:vk-maximal-rank}
 For any $k\in\bN$, the rank of the renormalized volume coefficient $v_k$ is $2k$.
\end{prop}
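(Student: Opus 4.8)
The plan is to sandwich the rank between $2k$ from above and $2k$ from below. Lemma~\ref{lem:finite_rank} already gives $(v_k)_{2k}^{2k}=0$, hence rank at most $2k$, so the whole problem is to rule out rank $r\leq 2k-1$. Suppose $r\leq 2k-1$; then $(v_k)_r^r=0$ as a natural operator, and in particular it vanishes in dimension $n=2k$. In the critical dimension the coefficient $\tfrac{n-2k}{\ell}$ appearing in Definition~\ref{defn:Ljell} vanishes, so (exactly as in the proof of Proposition~\ref{prop:critical_invariant_operator}) the superscript is irrelevant and $(v_k)_{m}^{m}|_{n=2k}=0$ forces $(v_k)_{m+1}^{m+1}|_{n=2k}=0$. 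Iterating from $m=r$ up to $m=2k-1$ would give $(v_k)_{2k-1}^{2k-1}|_{n=2k}=0$. Thus it suffices to prove $(v_k)_{2k-1}^{2k-1}\not=0$ in dimension $n=2k$.

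First I would identify the shape of this operator. By Proposition~\ref{prop:Lj_sym} and Lemma~\ref{lem:Lj_simple_properties} we have $(v_k)_{2k-1}^{2k-1}\in\FSA_{2k-1}^{-2k}$, and~\eqref{eqn:piLj} with $n=2k$ shows it annihilates constants, so it lies in $\FSA_{2k-1}^{-2k}\cap\ker I_{2k-1}$. Since $2k-1$ is odd and $-2k=-(2k-1)-1$, part~(3) of Lemma~\ref{lem:kernel-pi} applies and this space is one-dimensional, spanned by the operator $L$ of~\eqref{eqn:top-degree-operator}. Hence $(v_k)_{2k-1}^{2k-1}|_{n=2k}=cL$ for a single constant $c$, and everything reduces to showing $c\not=0$.

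To compute $c$ I would route through the Case--Wang operator and Theorem~\ref{thm:vk_operator_flat}. On a flat manifold $v_k=\sigma_k$, and $L_{2k}$ is formally self-adjoint, conformally covariant, and satisfies $L_{2k}(1,\dotsc,1)=(\tfrac{n-2k}{2k})^{2k-1}v_k$ by~\eqref{eqn:casewang_definition}; this is precisely the normalization~\eqref{eqn:covariant_operator_normalization} with $\ell=2k-1$. The uniqueness in the proof of Proposition~\ref{prop:invariant_operator} then identifies $L_{2k}=\tfrac{1}{(2k-1)!}(v_k)_{2k-1}^{2k}$ for every $n>2k$. Both sides are natural operators with coefficients rational in $n$, so the identity continues to $n=2k$, where the superscript collapse gives $(v_k)_{2k-1}^{2k}=(v_k)_{2k-1}^{2k-1}$; therefore $(v_k)_{2k-1}^{2k-1}|_{n=2k}=(2k-1)!\lim_{n\to2k}L_{2k}$. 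Feeding Theorem~\ref{thm:vk_operator_flat} into this limit, each summand with $j\geq1$ carries a factor $(\tfrac{n-2k}{2k})^j\to0$, so only the $j=0$ term survives and $\lim_{n\to2k}L_{2k}=-(-2)^{-k}b_k|_{n=2k}\,D_0^k$.

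It then remains to see that both constants are nonzero. Solving the recursion~\eqref{eqn:recursive_relation} at $n=2k$ gives $b_{j+1}=\tfrac{2}{k+j}\binom{k+j}{j}$, hence $b_k|_{n=2k}=\tfrac{2}{2k-1}\binom{2k-1}{k-1}\not=0$; and evaluating Lemma~\ref{lem:vk_basis} at $j=0$ (with $\sigma_0=1$, and using Lemma~\ref{lem:Djk_rewrite}) shows $D_0^k=-\tfrac{1}{(2k-1)!}L$, a nonzero multiple of the spanning operator. Hence $c\not=0$, so $(v_k)_{2k-1}^{2k-1}\not=0$ and the rank is exactly $2k$. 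I expect the main obstacle to be the analytic continuation bookkeeping: one must confirm that $(v_k)_{2k-1}^{2k}$ and $L_{2k}$ are genuinely rational (indeed polynomial) in $n$ so that the flat-manifold identity $L_{2k}=\tfrac{1}{(2k-1)!}(v_k)_{2k-1}^{2k}$ can be specialized to $n=2k$, and to apply the superscript collapse $(v_k)_{2k-1}^{2k}=(v_k)_{2k-1}^{2k-1}$ only there rather than for $n>2k$.
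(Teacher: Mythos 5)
Your argument is sound in outline but follows a genuinely different route from the paper. The paper's proof is a short direct computation: it takes the round sphere $(S^{2k},g_0)$ and a first spherical harmonic $x$, computes $e^{2ktx}\sigma_k^{e^{2tx}g_0}=2^{1-k}\binom{2k-1}{k-1}(1+2tx)(1+2tx-t^2+t^2x^2)^{k-1}$, observes this is a polynomial of degree exactly $2k-1$ in $t$, and reads off from~\eqref{eqn:critical_Ljell_defn_consequence2} with $j=0$ that $L_j^j\not=0$ on $S^{2k}$ for all $j\leq 2k-1$; Lemma~\ref{lem:finite_rank} gives the upper bound. You instead reduce to the critical dimension (correctly, via the superscript collapse), pin down the shape of $(v_k)_{2k-1}^{2k-1}$ using Lemma~\ref{lem:kernel-pi}(3), and compute the constant by identifying $(v_k)_{2k-1}^{2k}$ with the Case--Wang operator and specializing Theorem~\ref{thm:vk_operator_flat} to $n=2k$. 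Your route yields strictly more information --- an explicit identification of $(v_k)_{2k-1}^{2k-1}$ in the critical dimension as $(-2)^{-k}b_k$ times the polarized $2k$-Laplacian~\eqref{eqn:top-degree-operator} --- but at the cost of two steps you should make explicit: (i) the uniqueness direction of Proposition~\ref{prop:invariant_operator} is proved for operators natural on all Riemannian manifolds, whereas the Case--Wang operator is only natural on locally conformally flat ones, so you must rerun that induction within the (conformally closed) locally conformally flat class before concluding $L_{2k}=\frac{1}{(2k-1)!}(v_k)_{2k-1}^{2k}$ there; and (ii) the continuation of that identity from $n>2k$ to $n=2k$ requires checking that both sides, expanded in a basis of flat-space polydifferential operators linearly independent for all large $n$, have coefficients rational in $n$ without poles at $n=2k$ (the $b_j$ are polynomial in $n$, and the recursion of Definition~\ref{defn:Ljell} preserves rationality of coefficients). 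You correctly flag (ii) as the delicate point; neither issue is fatal, but the paper's sphere computation avoids both entirely.
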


\begin{proof}
 Consider the $2k$-dimensional sphere $(S^{2k},g_0)$ of constant sectional curvature one.  As a conformally flat manifold, it holds that $v_k=\sigma_k$.  Let $x$ be a first spherical harmonic, where $x$ is a Cartesian coordinate on $\bR^{2k+1}\supset S^{2k}$.  Then $\nabla^2x=-xg_0$ and $\lv\nabla x\rv^2=1-x^2$.  Set $g_t=e^{2tx}g_0$.  We compute that
 \begin{align*}
  e^{2ktx}\sigma_k^{g_t} & = 2^{-k}\sigma_k\left((1+2tx-t^2+t^2x^2)g_0 + 2t^2\,dx\otimes dx\right) \\
  & = 2^{1-k}\binom{2k-1}{k-1}(1+2tx)(1+2tx-t^2+t^2x^2)^{k-1} ,
 \end{align*}
 where the second equality uses Lemma~\ref{lem:esp_props}.  It follows that
 \[ \left.\frac{\partial^j}{\partial t^j}\right|_{t=0}e^{2ktx}\sigma_k^{g_t} = 0 \]
 if and only if $j\geq 2k$.  Combining this with the case $j=0$ of~\eqref{eqn:critical_Ljell_defn_consequence2} implies that the operators $L_j^j$ associated to $L=v_k$ are nonvanishing on $(S^{2k},g_0)$ for all $j\leq 2k-1$.  It follows from Lemma~\ref{lem:finite_rank} that $v_k$ has rank $2k$, as claimed.
\end{proof}

\section{The operators associated to $v_2$ and $v_3$}
\label{sec:ambient}

The operators defined in Theorem~\ref{thm:vk_operator_flat} are all formally self-adjoint conformally covariant polydifferential operators associated to the renormalized volume coefficients $v_k$ on a flat manifold.  From the explicit formula of the ambient metric for a flat manifold~\cite{FeffermanGraham2012}, one deduces that the same expressions~\eqref{eqn:vk_operator_flat} define tangential operators in the ambient space of a flat manifold.  One thus expects that the operators~\eqref{eqn:vk_operator_flat}, regarded as second-order operators in the ambient space, are always tangential.  However, these operators are only associated to the renormalized volume coefficients $v_k$ of generic manifolds when $k\leq3$.  The purpose of this section is to make these comments precise.

Observe first that the operator $L_2$ in Theorem~\ref{thm:vk_operator_flat} is
\[ L_2(u) = -\Delta u . \]
As an ambient expression, this operator is well-known~\cite{GJMS1992} to be tangential on $\cmE\bigl[-\frac{n-2}{2}\bigr]$ for the ambient space of any $n$-dimensional Riemannian manifold.  Indeed, it induces the conformal Laplacian, which is associated to $J=\sigma_1$.

Before proceeding to the cases of higher $k$, we must first make precise the relationship between conformally covariant operators on a Riemannian manifold and tangential operators in its ambient space.

\begin{defn}
 \label{defn:tangential_polydifferential}
 Let $(M^n,g)$ be a Riemannian manifold and let $(\cmG,\cg)$ be its ambient space.  A polydifferential operator $\cL\colon\bigl(\cmE[w]\bigr)^k\to\cmE[w^\prime]$ is \emph{tangential} if the map
 \[ \bigl(\cmE[w]\bigr)^k \ni (u_1,\dotsc,u_k) \mapsto \cL(u_1,\dotsc,u_k)\rv_\mG \in \mE[w^\prime] \]
 depends only on $(u_1\rv_\mG,\dotsc,u_k\rv_\mG)\in\bigl(\mE[w]\bigr)^k$.
\end{defn}

First we observe that tangential operators on the ambient space give rise to conformally covariant operators on the underlying Riemannian manifolds.

\begin{lem}
 \label{lem:tangential_polydifferential}
 Let $(M^n,g)$ be a Riemannian manifold and let $(\cmG,\cg)$ be its ambient space.  If $\cL\colon\bigl(\cmE[w]\bigr)^k\to\cmE[w^\prime]$ is a tangential polydifferential operator, then it induces a conformally covariant polydifferential operator $L$ of bidegree $(-w,-w^\prime)$.
\end{lem}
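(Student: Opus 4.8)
The plan is to repeat, slot by slot, the argument already given for linear tangential operators in the discussion preceding Definition~\ref{defn:tangential}. First I would use the tangentiality hypothesis to descend $\cL$ to densities: by Definition~\ref{defn:tangential_polydifferential} the restriction $\cL(u_1,\dotsc,u_k)\rv_\mG$ depends only on $(u_1\rv_\mG,\dotsc,u_k\rv_\mG)$, so setting $L(u_1\rv_\mG,\dotsc,u_k\rv_\mG):=\cL(u_1,\dotsc,u_k)\rv_\mG$ yields a well-defined polydifferential operator $L\colon\bigl(\mE[w]\bigr)^k\to\mE[w^\prime]$. Transporting $L$ through the $g$-dependent identifications of Section~\ref{sec:bg} produces the induced operator $L^g\colon\bigl(C^\infty(M)\bigr)^k\to C^\infty(M)$; it acts as a differential operator in each slot because $\cL$ does, and it is natural because the ambient construction is, so $L^g$ is a natural polydifferential operator. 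This is the only step in which tangentiality is used.

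Denote by $\Phi_g\colon C^\infty(M)\to\mE[w]$ and $\Psi_g\colon C^\infty(M)\to\mE[w^\prime]$ the identifications of weights $w$ and $w^\prime$, characterized by $\Phi_g(f)(x,g_x)=f(x)$ and likewise for $\Psi_g$, so that $L^g=\Psi_g^{-1}\circ L\circ\Phi_g$ with $\Phi_g$ applied in each argument. The heart of the proof is to compare these identifications for $g$ and for $\hg:=e^{2\Upsilon}g$. Evaluating $\Phi_{\hg}(f)$ at the metric $g_x$ and invoking the density transformation law~\eqref{eqn:density_transformation} gives $\Phi_{\hg}(f)(x,g_x)=e^{-w\Upsilon(x)}f(x)$; since a conformal density is determined by its value at a single representative metric, this means
\[ \Phi_{\hg}(f)=\Phi_g\bigl(e^{-w\Upsilon}f\bigr) . \]
The same computation at weight $w^\prime$ yields $\Psi_{\hg}^{-1}(v)=e^{w^\prime\Upsilon}\Psi_g^{-1}(v)$ for all $v\in\mE[w^\prime]$.

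With these two relations the covariance formula follows by formal substitution. Writing $L^{\hg}=\Psi_{\hg}^{-1}\circ L\circ\Phi_{\hg}$, I would replace each $\Phi_{\hg}(f_j)$ by $\Phi_g(e^{-w\Upsilon}f_j)$, use $L\circ\Phi_g=\Psi_g\circ L^g$ to pass from $L$ to $L^g$, and finally convert $\Psi_{\hg}^{-1}$ to $\Psi_g^{-1}$ at the cost of a factor $e^{w^\prime\Upsilon}$. The result is
\[ L^{e^{2\Upsilon}g}(f_1,\dotsc,f_k)=e^{w^\prime\Upsilon}\,L^g\bigl(e^{-w\Upsilon}f_1,\dotsc,e^{-w\Upsilon}f_k\bigr) , \]
which is exactly~\eqref{eqn:conformally_covariant} with bidegree $(a,b)=(-w,-w^\prime)$.

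I do not anticipate any genuine obstacle: this is a direct extension of the linear case, and the only real work is keeping the two weight exponents straight as they pass through the identifications $\Phi_g,\Psi_g$. The single conceptual point worth stating carefully is the well-definedness of $L$ in the first paragraph, as that is where---and the only place where---the tangentiality hypothesis enters; everything subsequent is formal manipulation of~\eqref{eqn:density_transformation}.
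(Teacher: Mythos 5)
Your proposal is correct and follows essentially the same route as the paper: define $L$ on densities using tangentiality for well-definedness, then read off the bidegree from the transformation law~\eqref{eqn:density_transformation} under the $g$-dependent identifications. The paper compresses the second step into a single sentence, whereas you spell out the bookkeeping with $\Phi_g,\Psi_g$ explicitly; the exponents and the resulting bidegree $(-w,-w^\prime)$ all check out.
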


\begin{proof}
 Let $\cu_1,\dotsc,\cu_k\in\cmE[w]$.  Since $\cL$ is tangential, $\cL(\cu_1,\dotsc,\cu_k)$ depends only on $\cu_1\rv_{\mG},\dotsc,\cu_k\rv_{\mG}\in\mE[w]$.  Therefore
 \[ L(u_1,\dotsc,u_k) := \cL(\cu_1,\dotsc,\cu_k)\rv_{\mG} \in \mE[w^\prime] \]
 is well-defined on $\bigl(\mE[w]\bigr)^k$, where $\cu_1,\dotsc,\cu_k\in\cmE[w]$ are such that $\cu_j\rv_{\mG}=u_j$ for all integers $1\leq j\leq k$.  That $L$ is conformally covariant of bidegree $(-w,-w^\prime)$ now follows from~\eqref{eqn:density_transformation}.
\end{proof}

Next we observe that one can use the defining function $Q$ of $\mG$ to identify whether an ambient operator is tangential.

\begin{lem}
 \label{lem:tangential_polydifferential_Q}
 Let $(M^n,g)$ be a Riemannian manifold and let $(\cmG,\cg)$ be its ambient space.  A polydifferential operator $\cL\colon\bigl(\cmE[w]\bigr)^k\to\cmE[w^\prime]$ is tangential if and only if
 \begin{equation}
  \label{eqn:tangential_polydifferential_Q}
  \cL(u_1,\dotsc,u_{j-1},Qz,u_{j+1},\dotsc,u_k) \equiv 0 \mod Q
 \end{equation}
 for all $u_1,\dotsc,u_k\in\cmE[w]$, all $z\in\cmE[w-2]$, and all integers $1\leq j\leq k$.
\end{lem}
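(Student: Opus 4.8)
The plan is to mirror the proof of the single-variable Lemma~\ref{lem:tangential}, exploiting the multilinearity of $\cL$ to reduce the polydifferential statement to its slot-by-slot analogue. Throughout I would use that $Q$ is a defining function for $\mG$, so that any homogeneous ambient function vanishing on $\mG$ is divisible by $Q$, exactly as in the proof of Lemma~\ref{lem:tangential}.

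For the forward direction, suppose $\cL$ is tangential. Fix an index $j$ and arbitrary $u_1,\dotsc,u_k\in\cmE[w]$ and $z\in\cmE[w-2]$. By Definition~\ref{defn:tangential_polydifferential}, the restriction $\cL(u_1,\dotsc,u_k)\rv_\mG$ depends only on the boundary values $u_i\rv_\mG$. Since $Q$ vanishes on $\mG$, we have $(Qz)\rv_\mG=0$, so replacing the $j$-th argument by $Qz$ gives the same restriction as replacing it by $0$; by multilinearity the latter is $0$. This is precisely~\eqref{eqn:tangential_polydifferential_Q}.

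For the reverse direction, assume~\eqref{eqn:tangential_polydifferential_Q} holds in every slot and suppose $u_i\rv_\mG=v_i\rv_\mG$ for all $i$. Because $Q$ is a defining function and each $u_i-v_i\in\cmE[w]$ is a homogeneous function vanishing on $\mG$, I may write $u_i-v_i=Qz_i$ for some $z_i\in\cmE[w-2]$. I would then telescope the difference $\cL(u_1,\dotsc,u_k)-\cL(v_1,\dotsc,v_k)$ by swapping one argument at a time; the $j$-th term is $\cL(v_1,\dotsc,v_{j-1},u_j-v_j,u_{j+1},\dotsc,u_k)$ by additivity in the $j$-th slot, which equals $\cL(v_1,\dotsc,v_{j-1},Qz_j,u_{j+1},\dotsc,u_k)$. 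By hypothesis each such term is divisible by $Q$ and hence restricts to $0$ on $\mG$. Summing, $\cL(u_1,\dotsc,u_k)\rv_\mG=\cL(v_1,\dotsc,v_k)\rv_\mG$, so $\cL$ is tangential.

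The argument is essentially routine; the only point requiring the structure of the ambient space is the divisibility $u_i-v_i=Qz_i$ with $z_i$ of the correct homogeneity $w-2$, which is exactly the property of $Q$ already invoked in Lemma~\ref{lem:tangential}. Thus I do not anticipate any genuine obstacle beyond correctly bookkeeping the telescoping sum and the homogeneity degrees.
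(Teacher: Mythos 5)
Your proof is correct and follows essentially the same route as the paper: the forward direction is immediate from the definition since $(Qz)\rv_\mG=0$, and the converse writes $u_i-v_i=Qz_i$ using that $Q$ is a defining function and then uses multilinearity (your telescoping sum makes explicit what the paper leaves implicit) together with the hypothesis~\eqref{eqn:tangential_polydifferential_Q} in each slot. No gaps.
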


\begin{proof}
 It is clear that if $\cL$ is tangential, then~\eqref{eqn:tangential_polydifferential_Q} holds.  Conversely, suppose that~\eqref{eqn:tangential_polydifferential_Q} holds.  Let $u_j^{(1)},u_j^{(2)}\in\cmE[w]$, $1\leq j\leq k$, be such that $u_j^{(1)}\rv_\mG=u_j^{(2)}\rv_\mG$ for all integers $1\leq j\leq k$.  Since $Q$ is a defining function for $\mG$, there are $z_1,\dotsc,z_k\in\cmE[w-2]$ such that $u_j^{(1)}-u_j^{(2)}=Qz_j$ for all integers $1\leq j\leq k$.  Since $\cL$ is polydifferential, we conclude from~\eqref{eqn:tangential_polydifferential_Q} that
 \[ \cL\bigl(u_1^{(1)},\dotsc,u_k^{(1)}\bigr)\rv_\mG = \cL\bigl(u_1^{(2)},\dotsc,u_k^{(2)}\bigr)\rv_{\mG} . \qedhere \]
\end{proof}

We can now study the operator $L_4$ in Theorem~\ref{thm:vk_operator_flat} as an ambient polydifferential operator.  Our computations are simplified by first computing with respect to a non-symmetric polydifferential operator and then considering its symmetrization.  Here, given spaces $V,W$ and a function $F\colon V^k\to W$, $k\in\bN$, we define the \emph{symmetrization $\Sym F\colon V^k\to W$ of $F$} by
\begin{equation}
 \label{eqn:symmetrization}
 \Sym F(v_1,\dotsc,v_k) := \frac{1}{k!} \sum_{\sigma\in S_k} F(v_{\sigma(1)},\dotsc,v_{\sigma(k)})
\end{equation}
for all $v_1,\dotsc,v_k\in V$, where $S_k$ is the symmetric group on $k$ elements.

\begin{thm}
 \label{thm:sigma2_operator}
 Let $(M^n,g)$ be a Riemannian manifold and let $(\cmG,\cg)$ be its ambient space.  Define $\cL_4\colon\bigl(\cmE\bigl[-\frac{n-4}{4}\bigr]\bigr)^3\to\cmE\bigl[-\frac{3n+4}{4}\bigr]$ as the polarization of
 \[ \cL_4(u) = \frac{1}{2}\cdelta\left(\lv\cnabla u\rv^2\,\cd u\right) - \frac{n-4}{16}\left[ u\cDelta\lv\cnabla u\rv^2 - \cdelta\left((\cDelta u^2)\,\cd u\right)\right] . \]
 Then $\cL_4$ is tangential.  Moreover, the operator $L_4$ associated to $\cL_4$ is such that
 \begin{equation}
  \label{eqn:L4_constant}
  L_4(1,1,1) = \left(\frac{n-4}{4}\right)^3\sigma_2^g .
 \end{equation}
 In particular, $L_4$ is associated to the $\sigma_2$-curvature.
\end{thm}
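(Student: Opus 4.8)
The plan is to establish the three assertions in turn: tangentiality of $\cL_4$, the evaluation \eqref{eqn:L4_constant}, and then the claim that $L_4$ is associated to $\sigma_2$. Note that once tangentiality is known, Lemma~\ref{lem:tangential_polydifferential} immediately gives that $L_4$ is conformally covariant of bidegree $\bigl(\frac{n-4}{4},\frac{3n+4}{4}\bigr)$, and \eqref{eqn:L4_constant} together with Definition~\ref{defn:recovers} (with $\ell=3$ and $2k=4$, so that $\bigl(\frac{n-2k}{\ell+1}\bigr)^\ell=\bigl(\frac{n-4}{4}\bigr)^3$) shows that $L_4$ recovers $\sigma_2$ in noncritical dimensions; the critical-dimension condition then follows by analytic continuation in the dimension, as explained after Definition~\ref{defn:recovers}. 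Hence, beyond tangentiality and the evaluation \eqref{eqn:L4_constant}, the only remaining point is formal self-adjointness.

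For tangentiality I would apply Lemma~\ref{lem:tangential_polydifferential_Q}. Since $\cL_4$ is symmetric and the tangential condition is bilinear in the two remaining arguments, it suffices to verify $\cL_4(Qz,u,u)\equiv 0 \mod Q$ for all $z\in\cmE[w-2]$ and $u\in\cmE[w]$ with $w=-\frac{n-4}{4}$. Expanding each term and using $\cnabla Q=2X$, the homogeneity $Xu=wu$, the commutator \eqref{eqn:DeltakX}, and the divergence identities \eqref{eqn:divf}--\eqref{eqn:divQ}, one collects the contributions that are not manifestly divisible by $Q$; for instance $\langle\cnabla(Qz),\cnabla u\rangle\equiv 2wzu \bmod Q$ because $\langle X,\cnabla u\rangle=Xu=wu$, and similar leftover terms arise from $\cdelta$ and $\cDelta$ applied to $Q$-multiples. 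I expect this to be the main obstacle: the coefficient $-\frac{n-4}{16}$ is precisely what is needed to cancel these non-$Q$-divisible remainders, so the computation must be organized to exhibit that cancellation rather than to simplify blindly.

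For \eqref{eqn:L4_constant} I would evaluate $\cL_4$ on the density extension $t^w$ of the constant function $1$ and restrict to $\mG$, i.e.\ set $\rho=0$, using Lemma~\ref{lem:local_ambient_formulas}. The crucial simplification is \eqref{eqn:gradient}: since $\lv\cnabla t^w\rv^2=0$ identically, both $\frac12\cdelta(\lv\cnabla t^w\rv^2\,\cd t^w)$ and $u\cDelta\lv\cnabla u\rv^2$ vanish, so only $\frac{n-4}{16}\cdelta\bigl((\cDelta t^{2w})\,\cd t^w\bigr)$ survives. Applying \eqref{eqn:Laplacian} gives $\cDelta t^{2w}=2w\,t^{2w-2}\bigl(J-\rho\lv P\rv^2+\rho^2 Y\bigr)+o(\rho^2)$, and then \eqref{eqn:divergence} yields, at $\rho=0$, $\cdelta\bigl((\cDelta t^{2w})\,\cd t^w\bigr)=2w^2 t^{3w-4}\bigl(J^2-\lv P\rv^2\bigr)=4w^2\sigma_2\,t^{3w-4}$. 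Since $3w-4=-\frac{3n+4}{4}$ is the target weight and $w=-\frac{n-4}{4}$, multiplying by $\frac{n-4}{16}$ produces $\bigl(\frac{n-4}{4}\bigr)^3\sigma_2$, which is \eqref{eqn:L4_constant}.

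Finally, for formal self-adjointness I would show that the polarized ambient four-linear form $(v_0,v_1,v_2,v_3)\mapsto v_0\,\cL_4(v_1,v_2,v_3)$ is symmetric modulo ambient divergences, which descends to formal self-adjointness of $L_4$ on $M$ via the correspondence between ambient densities and functions on $M$ that $\cL_4$ being tangential provides. Concretely, integrating by parts in the ambient space, using that $\cdelta$ is formally adjoint to $-\cd$ and that $\cDelta$ is self-adjoint, the first term reduces to $-\frac12\Sym\langle\cnabla v_1,\cnabla v_2\rangle\langle\cnabla v_0,\cnabla v_3\rangle$, which is fully symmetric; while the remaining two terms combine, after expanding $\cDelta(v_iv_j)=v_i\cDelta v_j+v_j\cDelta v_i+2\langle\cnabla v_i,\cnabla v_j\rangle$, into constant multiples of the fully symmetric forms $\sum v_i\cDelta v_j\langle\cnabla v_k,\cnabla v_l\rangle$ and $\sum\langle\cnabla v_i,\cnabla v_j\rangle\langle\cnabla v_k,\cnabla v_l\rangle$, the sums being over the relevant partitions of $\{0,1,2,3\}$. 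Having assembled conformal covariance, recovery of $\sigma_2$, and formal self-adjointness, $L_4$ is associated to $\sigma_2$ in the sense of Definition~\ref{defn:associated}.
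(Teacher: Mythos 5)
Your treatment of tangentiality and of the evaluation \eqref{eqn:L4_constant} follows the paper's own route: the paper checks insertions of $Qz$ slot-by-slot in the non-symmetric polarization $\cL(u,v,w)$ (using \eqref{eqn:DeltakX}, \eqref{eqn:divf}, \eqref{eqn:divQ} and the weight bookkeeping, exactly the tools you name), and computes $L_4(1,1,1)$ by observing that \eqref{eqn:gradient} kills all but the $\frac{n-4}{16}\cdelta\bigl((\cDelta t^{2w})\,\cd t^w\bigr)$ term, then applying \eqref{eqn:Laplacian} and \eqref{eqn:divergence}; your arithmetic there is correct. Your reduction of tangentiality to $\cL_4(Qz,u,u)$ via polarization is also legitimate (it is the device used in the paper's proof of Theorem~\ref{thm:v3_operator}), though like the paper you leave the actual cancellation unverified.

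The genuine gap is in your argument for formal self-adjointness. You propose to show that the ambient four-linear form $(v_0,\dotsc,v_3)\mapsto v_0\,\cL_4(v_1,v_2,v_3)$ is symmetric modulo ambient divergences and assert that this ``descends'' to formal self-adjointness on $M$. There is no natural integration on the cone $\cmG$ against which homogeneous functions of these weights can be paired, and an ambient divergence does not in general restrict to a divergence on $M$: the identity \eqref{eqn:tangential_divergence} that makes such a descent work in Proposition~\ref{prop:reduction} is special to densities of weight zero in the critical dimension, whereas here the inputs lie in $\cmE\bigl[-\frac{n-4}{4}\bigr]$. The paper itself flags exactly this obstruction: the manifest ambient symmetry of the Ovsienko--Redou operators yields only Conjecture~\ref{conj:ovsienko_redou}, and Remark~\ref{rk:fsa} stresses that formal self-adjointness of $B$ and $C$ ``is not immediately clear from their construction.'' The intended route is indirect: since $\sigma_2=v_2$ is a CVI of rank $4$ (Proposition~\ref{prop:vk-maximal-rank}), the induction in the proof of Proposition~\ref{prop:invariant_operator} shows that any \emph{symmetric} conformally covariant operator of the correct bidegree satisfying the normalization \eqref{eqn:covariant_operator_normalization} must equal $\frac{1}{3!}L_3^4$, which is formally self-adjoint by Proposition~\ref{prop:Lj_sym}. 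Replacing your ambient integration-by-parts with this uniqueness argument closes the gap; as written, that step would not survive scrutiny.
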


\begin{proof}
 Define $\cL\colon\bigl(\cmE\bigl[-\frac{n-4}{4}\bigr]\bigr)^3\to\cmE\bigl[-\frac{3n+4}{4}\bigr]$ by
 \[ \cL(u,v,w) = \frac{1}{2}\cdelta\left(\lp\cnabla u,\cnabla v\rp\,\cd w\right) - \frac{n-4}{16}\left[ w\cDelta\lp\cnabla u,\cnabla v\rp - \cdelta\left(\cDelta(uv)\,\cd w\right) \right] . \]
 Let $u,v\in\cmE\bigl[-\frac{n-4}{4}\bigr]$ and $z\in\cmE\bigl[-\frac{n+4}{4}\bigr]$.  Since $\lp\cnabla u,\cnabla v\rp,\cDelta(uv)\in\cmE\bigl[-\frac{n}{2}\bigr]$, we deduce that
 \[ \cL(u,v,Qz) \equiv 0 \mod Q . \]
 In particular, $\cL$ is tangential in its last component.  Using~\eqref{eqn:DeltakX} and our weight assumptions, we also compute that
 \[ \cL(Qz,u,v) \equiv 0 \mod Q . \]
 In particular, $\cL$ is tangential in its first component.  Since $\cL$ is symmetric in its first two components, we conclude that $\cL$ it tangential.  Hence $\cL_4=\Sym\cL$ is tangential.
 
 We now verify~\eqref{eqn:L4_constant}.  Since $\cL_4$ is tangential on $\bigl(\cmE\bigl[-\frac{n-4}{4}\bigr]\bigr)^3$, it holds that
 \[ L_4(1,1,1) = \left.\cL_4\left(t^{-\frac{n-4}{4}},t^{-\frac{n-4}{4}},t^{-\frac{n-4}{4}}\right)\right|_{t=1,\rho=0} . \]
 Combining this with~\eqref{eqn:gradient} and~\eqref{eqn:divergence} yields
 \[ L_4(1,1,1) = \left.\frac{n-4}{16}\cdelta\left((\cDelta t^{-\frac{n-4}{2}})\,\cd t^{-\frac{n-4}{4}}\right)\right|_{t=1,\rho=0} = \left(\frac{n-4}{4}\right)^3\sigma_2 . \qedhere \]
\end{proof}

\begin{remark}
 One can also use Theorem~\ref{thm:sigma2_operator} to derive the formula~\eqref{eqn:sigma2_operator} for the formally self-adjoint conformally covariant operator associated to the $\sigma_2$-curvature; see~\cite[Remark~2.2]{Case2019fl}.
\end{remark}

We similarly study the operator $L_6$ in Theorem~\ref{thm:vk_operator_flat} as an ambient polydifferential operator.

\begin{thm}
 \label{thm:v3_operator}
 Let $(M^n,g)$, $n\not=4$, be a Riemannian manifold and let $(\cmG,\cg)$ be its ambient space.  Define $\cL_6\colon\bigl(\cmE\bigl[-\frac{n-6}{6}\bigr]\bigr)^5\to\cmE\bigl[-\frac{5n+6}{6}\bigr]$ as the polarization of
 \begin{multline*}
  \cL_6(u) = -\frac{n^2-3n+18}{72}\cdelta\left(\lv\cnabla u\rv^4\,\cd u\right) \\ + \left(\frac{n-6}{6}\right)\frac{n+3}{36}\left[ \frac{1}{2}u\cDelta\lv\cnabla u\rv^4 - \cdelta\left(\lv\cnabla u\rv^2(\cDelta u^2)\,\cd u\right) \right] \\ + \frac{1}{12}\left(\frac{n-6}{6}\right)^2\left[ u\cdelta\left(T_1(\cnabla^2u^2)(\cnabla\lv\cnabla u\rv^2)\right) - \cdelta\left(\sigma_2(\cnabla^2u^2)\,\cd u\right) \right] . 
 \end{multline*}
 Then $\cL_6$ is tangential.  Moreover, the operator $L_6$ associated to $\cL_6$ is such that
 \begin{equation}
  \label{eqn:L6_constant}
  L_6(1,1,1,1,1) = \left(\frac{n-6}{6}\right)^5v_3 .
 \end{equation}
 In particular, $L_6$ is associated to the renormalized volume coefficient $v_3$.
\end{thm}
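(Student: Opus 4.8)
The plan is to follow the proof of Theorem~\ref{thm:sigma2_operator} closely, writing $w := -\frac{n-6}{6}$ so that each argument lies in $\cmE[w]$. First I would introduce a non-symmetric $5$-linear ambient operator $\cL$ with $\Sym\cL = \cL_6$, arranged to be symmetric in its first four arguments with the fifth distinguished. This is the natural polarization: in every summand of $\cL_6(u)$ exactly one copy of $u$ appears either inside $\cd u$ or as an outer multiplier, so I would place $u_5$ in that slot, while the remaining four copies of $u$ enter only through the symmetric expressions $\lv\cnabla u\rv^2 \mapsto \lp\cnabla u_1,\cnabla u_2\rp$, $\cDelta u^2 \mapsto \cDelta(u_3u_4)$, and $\cnabla^2 u^2 \mapsto \cnabla^2(u_1u_2)$. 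With $\cL$ so arranged, Lemma~\ref{lem:tangential_polydifferential_Q} reduces the tangentiality of $\cL$ to verifying \eqref{eqn:tangential_polydifferential_Q} in only two slots --- the distinguished slot $u_5 = Qz$ and a single symmetric slot $u_1 = Qz$ --- since the symmetry in $(u_1,u_2,u_3,u_4)$ carries the latter check to slots $2,3,4$. Tangentiality of $\cL_6 = \Sym\cL$ is then immediate, and Lemma~\ref{lem:tangential_polydifferential} shows that the induced operator $L_6$ is conformally covariant of the stated bidegree.

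For the distinguished slot I would substitute $u_5 = Qz$ with $z \in \cmE[w-2]$ and use $\cd(Qz) = z\,\cd Q + Q\,\cd z$ together with $\cnabla Q = 2X$. The outer-multiplier terms $u_5(\cdots) = Qz(\cdots)$ vanish modulo $Q$ outright, while in the divergence terms the formulas \eqref{eqn:divf} and \eqref{eqn:divQ} reduce $\cdelta\bigl(F\,\cd(Qz)\bigr)$ modulo $Q$ to contractions of $F$ and $z$ with $X$; evaluating these using $Xu_i = wu_i$ and the homogeneity weights of the gradient pairings and Laplacians, one checks that the leftover terms from the three brackets cancel. For the symmetric slot, substituting $u_1 = Qz$ produces via $\cnabla(Qz) = 2zX + Q\cnabla z$ the gradient contractions $\lp\cnabla(Qz),\cnabla u_j\rp \equiv 2wz\,u_j \bmod Q$, via \eqref{eqn:DeltakX} the Laplacian reduction $\cDelta\bigl(Q(zu_2)\bigr) \equiv 4X(zu_2) + 2(n+2)zu_2 \bmod Q$, and via the identity $\cnabla^2 Q = 2\cg$ the Hessian reduction $\cnabla^2\bigl(Q(zu_2)\bigr) \equiv 2\,\cd Q\odot\cd(zu_2) + 2zu_2\,\cg \bmod Q$, where $\odot$ is the symmetric product. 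The crucial feature is the metric term $2zu_2\,\cg$, whose effect on $\sigma_2(\cnabla^2u^2)$ and $T_1(\cnabla^2u^2)$ is governed by the identities \eqref{eqn:foil_identity} and \eqref{eqn:foil_rank1} of Lemma~\ref{lem:esp_props}. I expect the main obstacle to be the reconciliation of these Hessian contributions against the gradient contributions from the first two brackets: it is exactly this cancellation modulo $Q$ that forces the coefficients $-\frac{n^2-3n+18}{72}$, $\bigl(\frac{n-6}{6}\bigr)\frac{n+3}{36}$, and $\frac{1}{12}\bigl(\frac{n-6}{6}\bigr)^2$, and it is combinatorially heavier than the rank-$4$ case of Theorem~\ref{thm:sigma2_operator}, where no genuine Hessian term appears.

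To verify \eqref{eqn:L6_constant} I would use tangentiality to write $L_6(1,\dotsc,1) = \cL_6(t^w)\big|_{t=1,\rho=0}$, and then exploit the decisive simplification \eqref{eqn:gradient}: since $\lv\cnabla t^w\rv^2 = 0$ identically, every term of $\cL_6(t^w)$ containing a factor $\lv\cnabla u\rv^2$ vanishes. This annihilates both of the first two brackets and the $T_1$-term of the third bracket, which contains $\cnabla\lv\cnabla u\rv^2 = 0$, leaving only
\[ L_6(1,\dotsc,1) = -\frac{1}{12}\left(\frac{n-6}{6}\right)^2 \cdelta\left(\sigma_2\bigl(\cnabla^2 t^{2w}\bigr)\,\cd t^w\right)\Big|_{t=1,\rho=0} . \]
I would then insert the Hessian expansion \eqref{eqn:hessian} (with $w$ replaced by $2w$) to compute $\sigma_2(\cnabla^2 t^{2w})$ and apply the divergence formula \eqref{eqn:divergence}. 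The scalars produced there organize precisely as $J$, $\lv P\rv^2$, and the quantity $Y = \tr P^3 + \frac{1}{n-4}\lp B,P\rp$ of \eqref{eqn:Y}; since $v_3 = \frac{1}{6}J^3 - \frac{1}{2}J\lv P\rv^2 + \frac{1}{3}Y$, collecting terms gives $\bigl(\frac{n-6}{6}\bigr)^5 v_3$.

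Finally, formal self-adjointness of $L_6$ is inherited from the manifestly self-adjoint, divergence-form structure of $\cL_6$, exactly as for $\cL_4$ in Theorem~\ref{thm:sigma2_operator}; together with conformal covariance and \eqref{eqn:L6_constant}, which is the noncritical normalization \eqref{eqn:recover_noncritical} for $\ell = 5$ and $k = 3$, this shows that $L_6$ is associated to $v_3$ in the sense of Definition~\ref{defn:associated}.
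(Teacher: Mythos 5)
Your overall strategy is the paper's: reduce tangentiality to $Q$-divisibility checks slot by slot using $\cnabla Q=2X$, \eqref{eqn:divf}, \eqref{eqn:divQ} and \eqref{eqn:DeltakX}, then compute $L_6(1,\dotsc,1)$ by observing that \eqref{eqn:gradient} kills everything except the $\sigma_2$-term, which is evaluated via \eqref{eqn:hessian} and \eqref{eqn:divergence}. Two points need repair. First, the ``natural polarization'' you describe --- $u_1,u_2$ in the gradient slots and $u_3,u_4$ inside $\cDelta(u_3u_4)$ or $\cnabla^2(u_3u_4)$ --- is \emph{not} symmetric in its first four arguments (swapping a gradient slot with a Laplacian/Hessian slot changes the expression), so checking $u_1=Qz$ does not carry over to slots $3,4$, and your two-slot reduction is invalid as stated. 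You must either symmetrize over the first four slots before invoking Lemma~\ref{lem:tangential_polydifferential_Q} (in which case the single check unwinds into a sum over the inequivalent positions of $Qz$) or, as the paper does, verify $Qz$ separately in each inequivalent position: for each of three non-symmetric pieces $\cL_6^j$ it computes $\cL_6^j(Qz,u,u,u,u)+\cL_6^j(u,u,Qz,u,u)$ and $\cL_6^j(u,u,u,u,Qz)$. The fifth-slot contributions vanish individually, but the gradient- and Hessian-slot contributions cancel only in the combined three-bracket sum, which is what fixes the coefficients; the paper also needs that $T_1(\cnabla^2 f)$ is divergence-free along $\mG$ (Ricci-flatness of $\cg$) in the third bracket.

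Second, and more seriously, your closing claim that formal self-adjointness of $L_6$ is ``inherited from the manifestly self-adjoint, divergence-form structure of $\cL_6$'' is not a valid argument: restricting an ambient divergence to $\mG$ and then to $M$ introduces zeroth-order terms (compare \eqref{eqn:divergence}), so formal self-adjointness upstairs does not descend. This is precisely why Conjecture~\ref{conj:ovsienko_redou} is left open even though $\cD_{2k}$ is manifestly symmetric on $(\cmG,\cg)$. The mechanism the paper relies on is the one spelled out in Remark~\ref{rk:fsa}: once $L_6$ is known to be conformally covariant of bidegree $\bigl(\frac{n-6}{6},\frac{5n+6}{6}\bigr)$ with $L_6(1,\dotsc,1)=\bigl(\frac{n-6}{6}\bigr)^5v_3$, the uniqueness argument in the proof of Proposition~\ref{prop:invariant_operator} forces $L_6=\frac{1}{5!}L_5^6$ for $L=v_3$, and that operator is formally self-adjoint by Proposition~\ref{prop:Lj_sym}.
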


\begin{proof}
 Note that the polarization $\cL_6$ of $\cL_6(u)$ is tangential if and only if
 \begin{equation}
  \label{eqn:L6_simplification}
  \cL_6(Qz,u,u,u,u) \equiv 0 \mod Q
 \end{equation}
 for all $u\in\cmE\bigl[-\frac{n-6}{6}\bigr]$ and all $z\in\cmE\bigl[-\frac{n+6}{6}\bigr]$.  Indeed, if $\cL_6$ is tangential, then~\eqref{eqn:L6_simplification} trivially holds.  Conversely, if~\eqref{eqn:L6_simplification} holds, then
 \[ 0 \equiv \cL_6\left(Qz,\sum_{i=1}^4 s_iu_i,\sum_{i=1}^4 s_iu_i,\sum_{i=1}^4 s_iu_i,\sum_{i=1}^4 s_iu_i\right) \mod Q \]
 for all $u_1,\dotsc,u_4\in\cmE\bigl[-\frac{n-6}{6}\bigr]$, all $z\in\cmE\bigl[-\frac{n+6}{6}\bigr]$, and all $s_1,\dotsc,s_4\in\bR$.  Computing the mixed partial derivative $\partial_{s_1}\dotsm\partial_{s_4}$ at $s_1=\dotsm=s_4=0$ and using the symmetry of $\cL_6$ yields
 \[ \cL_6(Qz,u_1,u_2,u_3,u_4) \equiv 0 \mod Q \]
 for all $u_1,\dotsc,u_4\in\cmE\bigl[-\frac{n-6}{6}\bigr]$ and all $z\in\cmE\bigl[-\frac{n+6}{6}\bigr]$.  It follows that $\cL_6$ is tangential.
 
 Define $\cL_6^j\colon\bigl(\cmE\bigl[-\frac{n-6}{6}\bigr]\bigr)^5\to\cmE\bigl[-\frac{5n+6}{6}\bigr]$, $j=1,2,3$, by
 \begin{align*}
  \cL_6^1(u_1,u_2,v_1,v_2,w) & := \cdelta\left(\lp\cnabla u_1,\cnabla u_2\rp\lp\cnabla v_1,\cnabla v_2\rp\,\cd w\right), \\
  \cL_6^2(u_1,u_2,v_1,v_2,w) & := \frac{1}{2}w\cDelta\left(\lp\cnabla u_1,\cnabla u_2\rp\lp\cnabla v_1,\cnabla v_2\rp\right) \\
   & \qquad - \cdelta\left(\lp\cnabla u_1,\cnabla u_2\rp\cDelta(v_1v_2)\,\cd w\right) , \\
  \cL_6^3(u_1,u_2,v_1,v_2,w) & := w\cdelta\left(T_1(\cnabla^2(v_1v_2)(\cnabla\lp\cnabla u_1,\cnabla u_2\rp)\right) \\
   & \qquad - \cdelta\left(\sigma_2\bigl(\cnabla^2(u_1u_2),\cnabla^2(v_1v_2)\bigr)\,\cd w\right) .
 \end{align*}
 Note the symmetries
 \[ \cL_6^j(u_1,u_2,v_1,v_2,w)=\cL_6^j(u_2,u_1,v_1,v_2,w)=\cL_6^j(u_1,u_2,v_2,v_1,w) \]
 for all $u_1,u_2,v_1,v_2,w\in\cmE\bigl[-\frac{n-6}{6}\bigr]$ and all $j=1,2,3$.  Note also that if $z\in\cmE\bigl[-\frac{n+6}{6}\bigr]$, then $Qz\in\cmE\bigl[-\frac{n-6}{6}\bigr]$.  We now perform some computations.
 
 First, if $u\in\cmE\bigl[-\frac{n-6}{6}\bigr]$, then each of $\lv\cnabla u\rv^4$, $\lv\cnabla u\rv^2\cDelta u^2$ and $\sigma_2(\cnabla^2u^2)$ is in $\cmE\bigl[-\frac{2n}{3}\bigr]$.  It readily follows that
 \begin{equation}
  \label{eqn:v3_operator_5}
  \cL_6^j(u,u,u,u,Qz) \equiv 0 \mod Q
 \end{equation}
 for all $j=1,2,3$, all $u\in\cmE\bigl[-\frac{n-6}{6}\bigr]$, and all $z\in\cmE\bigl[-\frac{n+6}{6}\bigr]$.
 
 Second, our weight assumptions imply that
 \begin{multline}
  \label{eqn:v3_operator0}
  \cL_6^1(Qz,u,u,u,u) + \cL_6^1(u,u,Qz,u,u) \\ \equiv -\frac{2(n-6)}{3}\left[ z\cdelta\left(u\lv\cnabla u\rv^2\,\cd u\right) + 2u\lv\cnabla u\rv^2\lp\cnabla z,\cnabla u\rp \right] \mod Q
 \end{multline}
 for all $u\in\cmE\bigl[-\frac{n-6}{6}\bigr]$ and all $z\in\cmE\bigl[-\frac{n+6}{6}\bigr]$.
 
 Third, Equation~\eqref{eqn:DeltakX} implies that
 \begin{multline}
  \label{eqn:third1} \cDelta\left(\lv\cnabla u\rv^2\lp\cnabla(Qz),\cnabla u\rp\right)\equiv -\frac{n-6}{3}\cDelta\left(uz\lv\cnabla u\rv^2\right) \\ - \frac{2(n+6)}{3}\lv\cnabla u\rv^2\lp\cnabla z,\cnabla u\rp \mod Q
 \end{multline}
 for all $u\in\cmE\bigl[-\frac{n-6}{6}\bigr]$ and all $z\in\cmE\bigl[-\frac{n+6}{6}\bigr]$.  Using~\eqref{eqn:DeltakX} and~\eqref{eqn:third1}, we deduce that
 \begin{align*}
  \cL_6^2(Qz,u,u,u,u) & \equiv -\frac{n+6}{3}u\lv\cnabla u\rv^2\lp\cnabla z,\cnabla u\rp - \frac{n-6}{6}u\cDelta(zu\lv\cnabla u\rv^2) \\
   & \quad + \frac{n-6}{3}\cdelta\left(zu(\cDelta u^2)\,\cd u\right) + \frac{n-6}{3}u\lp\cnabla z,\cnabla u\rp\cDelta u^2 \mod Q , \\
  \cL_6^2(u,u,Qz,u,u) & \equiv -\frac{n+6}{3}u\lv\cnabla u\rv^2\lp\cnabla z,\cnabla u\rp - \frac{2(n+6)}{3}\cdelta\left(zu\lv\cnabla u\rv^2\,\cd u\right) \\
   & \quad - \frac{n-6}{6}u\cDelta(zu\lv\cnabla u\rv^2) + \frac{n-6}{3}u\lv\cnabla u\rv^2\cDelta(zu) \mod Q
 \end{align*}
 for all $u\in\cmE\bigl[-\frac{n-6}{6}\bigr]$ and all $z\in\cmE\bigl[-\frac{n+6}{6}\bigr]$.  In particular,
 \begin{equation}
  \label{eqn:v3_operator1}
  \begin{split}
   \MoveEqLeft \cL_6^2(Qz,u,u,u,u) + \cL_6^2(u,u,Qz,u,u) \\
   & \equiv -\frac{2(n+6)}{3}\left(z\cdelta\left(u\lv\cnabla u\rv^2\,\cd u\right) + 2u\lv\cnabla u\rv^2\lp\cnabla z,\cnabla u\rp\right)\\
    & \quad - \frac{n-6}{3}\left( u^2z\cDelta\lv\cnabla u\rv^2 - z\cdelta\left(u(\cDelta u^2)\,\cd u\right) \right) \\
    & \quad - \frac{2(n-6)}{3}u\lp\cnabla(uz),\cnabla\lv\cnabla u\rv^2\rp + \frac{2(n-6)}{3}u\lp\cnabla z,\cnabla u\rp\,\cDelta u^2 \mod Q
  \end{split}
 \end{equation}
 for all $u\in\cmE\bigl[-\frac{n-6}{6}\bigr]$ and all $z\in\cmE\bigl[-\frac{n+6}{6}\bigr]$.
 
 Fourth, if $f\in\cmE\bigl[-\frac{n}{3}\bigr]$, then
 \begin{equation}
  \label{eqn:T1}
  T_1\bigl(\cnabla^2(Qf)\bigr) = QT_1(\cnabla^2f) - 2X\otimes\cnabla f - 2(\cnabla f)\otimes X + \frac{2(n+3)}{3}f\cg .
 \end{equation}
 Since $2\sigma_2(\cnabla^2f_1,\cnabla^2f_2)=\lp T_1(\cnabla^2f_1),\cnabla^2f_2\rp$, we deduce that
 \begin{equation}
  \label{eqn:sigma2}
  \sigma_2(\cnabla^2(Qf_1),\cnabla^2f_2) = Q\sigma_2(\cnabla^2f_1,\cnabla^2f_2) + \frac{2(n-3)}{3}\lp\cnabla f_1,\cnabla f_2\rp + \frac{n+3}{3}f_1\cDelta f_2
 \end{equation}
 for all $f_1\in\cmE\bigl[-\frac{n}{3}\bigr]$ and all $f_2\in\cmE\bigl[-\frac{n-6}{3}\bigr]$.  Using~\eqref{eqn:sigma2}, we deduce that
 \begin{multline}
  \label{eqn:sigma2_Q}
  \delta\left(\sigma_2(\cnabla^2(Quz),\cnabla^2u^2)\,\cd u\right) \equiv -\frac{n-6}{3}u\sigma_2(\cnabla^2(uz),\cnabla^2u^2) \\ + \frac{n+3}{3}\cdelta\left(uz(\cDelta u^2)\,\cd u\right) + \frac{2(n-3)}{3}\cdelta\left(\lp\cnabla(uz),\cnabla u^2\rp\,\cd u\right) \mod Q
 \end{multline}
 for all $u\in\cmE\bigl[-\frac{n-6}{6}\bigr]$ and all $z\in\cmE\bigl[-\frac{n+6}{6}\bigr]$.  Using~\eqref{eqn:T1} and the fact that the divergence of $T_1(\cnabla^2f)$ is zero along $\mG$ in $(\cmG,\cg)$ for any $f\in C^\infty(\cmG)$, we deduce that
 \begin{multline}
  \label{eqn:T1-firstQ}
  u\cdelta\left(T_1(\cnabla^2(Quz))(\cnabla\lv\cnabla u\rv^2)\right) \equiv \frac{4(n+3)}{3}u\lp\cnabla(uz),\cnabla\lv\cnabla u\rv^2\rp \\ + \frac{2(n+3)}{3}u^2z\cDelta\lv\cnabla u\rv^2 \mod Q
 \end{multline}
 for all $u\in\cmE\bigl[-\frac{n-6}{6}\bigr]$ and all $z\in\cmE\bigl[-\frac{n+6}{6}\bigr]$.  Using these two facts again yields
 \begin{multline}
  \label{eqn:T1-secondQ}
  u\cdelta\left(T_1(\cnabla^2u^2)(\cnabla\lp\cnabla(Qz),\cnabla u\rp)\right) \equiv -\frac{2(n-6)}{3}u\sigma_2(\cnabla^2u^2,\cnabla^2(uz)) \\ + \frac{2(n-9)}{3}u\lp\cnabla z,\cnabla u\rp\,\cDelta u^2 + \frac{4(n-3)}{3}u\lp\cnabla\lp\cnabla z,\cnabla u\rp,\cnabla u^2\rp \mod Q
 \end{multline}
 for all $u\in\cmE\bigl[-\frac{n-6}{6}\bigr]$ and all $z\in\cmE\bigl[-\frac{n+6}{6}\bigr]$.  Combining~\eqref{eqn:sigma2_Q}, \eqref{eqn:T1-firstQ} and~\eqref{eqn:T1-secondQ} yields
 \begin{equation}
  \label{eqn:v3_operator2}
  \begin{split}
   \MoveEqLeft \cL_6^3(Qz,u,u,u,u) + \cL_6^3(u,u,Qz,u,u) \\
   & \equiv -\frac{8(n-3)}{3}\left(z\cdelta\left(u\lv\cnabla u\rv^2\,\cd u\right) + 2u\lv\cnabla u\rv^2\lp\cnabla u,\cnabla z\rp\right) \\
   & \quad + \frac{2(n+3)}{3}\left( u^2z\cDelta\lv\cnabla u\rv^2 - z\cdelta\left(u(\cDelta u^2)\,\cd u\right) \right) \\
   & \quad + \frac{4(n+3)}{3}u\lp\cnabla(uz),\cnabla\lv\cnabla u\rv^2\rp - \frac{4(n+3)}{3}u\lp\cnabla u,\cnabla z\rp\,\cDelta u^2 \mod Q
  \end{split}
 \end{equation}
 for all $u\in\cmE\bigl[-\frac{n-6}{6}\bigr]$ and all $z\in\cmE\bigl[-\frac{n+6}{6}\bigr]$.
 
 Finally, combining~\eqref{eqn:v3_operator0}, \eqref{eqn:v3_operator1} and~\eqref{eqn:v3_operator2} implies that~\eqref{eqn:L6_simplification} holds.  Therefore the polarization $\cL_6$ of $\cL_6(u)$ is tangential.
 
 We now verify~\eqref{eqn:L6_constant}.  Since $\cL_6$ is tangential on $\bigl(\cmE\bigl[-\frac{n-6}{6}\bigr]\bigr)^5$, it holds that
 \[ L_6(1,1,1,1,1) = \left.\cL_6\left(t^{-\frac{n-6}{6}},t^{-\frac{n-6}{6}},t^{-\frac{n-6}{6}},t^{-\frac{n-6}{6}},t^{-\frac{n-6}{6}}\right)\right|_{t=0,\rho=1} . \]
 Combining this with~\eqref{eqn:gradient}, \eqref{eqn:divergence} and~\eqref{eqn:hessian} yields
 \[ L_6(1,1,1,1,1) = \frac{1}{3}\left(\frac{n-6}{6}\right)^5\left.\left(\partial_\rho + J\right)\right|_{\rho=0}\sigma_2(U) \]
 for $U$ as in~\eqref{eqn:U}.  The conclusion follows from the identity
 \[ \sigma_2(U) = \sigma_2(P) - \rho\left(J\lv P\rv^2 - Y\right) + o(\rho) . \qedhere \]
\end{proof}

The verifications of~\eqref{eqn:L4_constant} and~\eqref{eqn:L6_constant} show that, even if the operator $\cL_{2k}$, $k\geq4$, defined by~\eqref{eqn:vk_operator_flat} is tangential, it does not induce an operator associated to the renormalized volume coefficient $v_k$.  To see this, note that if $\cL_{2k}$ is tangential, then~\eqref{eqn:gradient}, \eqref{eqn:divergence}, \eqref{eqn:hessian} imply that the operator $L_{2k}$ induced by $\cL_{2k}$ satisfies
\[ L_{2k}(1,\dotsc,1) = \frac{1}{k}\left(\frac{n-2k}{2k}\right)^{2k-1}\left[ J\sigma_{k-1}(P) - \left\lp T_{k-2}(P), P^2 + \frac{1}{n-4}B\right\rp \right] . \]
This expression does not involve the higher-order extended obstruction tensors, and hence cannot agree with a multiple of the renormalized volume coefficient $v_k$~\cite{Graham2009}.
\section{Two sixth-order operators of rank four}
\label{sec:trilinear}

As pointed out in our previous work~\cite{CaseLinYuan2016}, the space of CVIs of weight $-6$ is ten-dimensional.  The subspace of such CVIs which are trivial on locally conformally flat manifolds is six-dimensional.  A basis for a four-dimensional complementary subspace was identified in our previous work~\cite{CaseLinYuan2016}.  One element of our basis is Branson's sixth-order $Q$-curvature~\cite{Branson1995}.  Up to a multiplicative constant, this is the only possible basis element of rank $2$ which is nontrivial on locally conformally flat manifolds.  Another element of our basis is the third renormalized volume coefficient, which has rank $6$.  By Lemma~\ref{lem:kernel-pi}, we thus expect we can modify our previous basis so that the remaining two basis elements have rank four.  This is claimed by Theorem~\ref{thm:rank4} and proven in this section.

We prove Theorem~\ref{thm:rank4} by first constructing sixth-order conformally covariant tridifferential operators using the ambient metric, and then identifying their constant terms as CVIs by comparing them to our previous basis~\cite{CaseLinYuan2016}.  As in Section~\ref{sec:ambient}, our formulae are greatly simplified by working with non-symmetric elements of $\Poly$ and only symmetrizing at that end.

The first step in our proof of Theorem~\ref{thm:rank4} is to find two tangential sixth-order tridifferential operators on the ambient manifold.  This is accomplished in the following two propositions.

\begin{prop}
 \label{prop:operatorB}
 Let $(M^n,g)$ be a Riemannian manifold and let $(\cmG,\cg)$ be its ambient manifold.  Define $\cB\colon\bigl(\cmE\bigl[-\frac{n-6}{4}\bigr]\bigr)^{3}\to\cmE\bigl[-\frac{3n+6}{4}\bigr]$ by
 \[ \cB(\cu,\cv,\cw) = \cdelta\left(\cDelta(\lp\cnabla\cu,\cnabla\cv\rp)\,\cd\cw\right) - \frac{n-6}{16}\left[ \cw\cDelta^2(\lp\cnabla\cu,\cnabla\cv\rp) - \cdelta\left(\cDelta^2(\cu\cv)\,\cd\cw\right) \right] \]
 Then $\cB$ is tangential.  In particular, the operator $B$ induced by $\Sym\cB$ is a conformally covariant tridifferential operator of bidegree $\left(\frac{n-6}{4},\frac{3n+6}{4}\right)$.
\end{prop}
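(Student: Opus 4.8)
The plan is to establish tangentiality of $\cB$ directly from Lemma~\ref{lem:tangential_polydifferential_Q} and then invoke Lemma~\ref{lem:tangential_polydifferential} to conclude that $\Sym\cB$ descends to a conformally covariant tridifferential operator of the stated bidegree. Writing $w=-\frac{n-6}{4}$ for the weight of the inputs, I would first record the weights of the intermediate quantities: $\lp\cnabla\cu,\cnabla\cv\rp\in\cmE\bigl[-\frac{n-2}{2}\bigr]$ and $\cu\cv\in\cmE\bigl[-\frac{n-6}{2}\bigr]$, so that $\cDelta\lp\cnabla\cu,\cnabla\cv\rp$ and $\cDelta^2(\cu\cv)$ both lie in $\cmE\bigl[-\frac{n+2}{2}\bigr]$, confirming that $\cB$ maps into $\cmE\bigl[-\frac{3n+6}{4}\bigr]$. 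Since $\cB$ is manifestly symmetric in its first two slots, Lemma~\ref{lem:tangential_polydifferential_Q} reduces tangentiality to the two checks $\cB(\cu,\cv,Qz)\equiv 0$ and $\cB(Qz,\cv,\cw)\equiv 0 \bmod Q$ for $z\in\cmE[w-2]$.

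The $\cw$-slot check is routine. Substituting $\cw=Qz$, the term $\cw\,\cDelta^2\lp\cnabla\cu,\cnabla\cv\rp$ is visibly divisible by $Q$, while each divergence term is handled by~\eqref{eqn:divf}: writing the relevant coefficient function as $\psi\in\cmE\bigl[-\frac{n+2}{2}\bigr]$ and noting $z\in\cmE\bigl[-\frac{n+2}{4}\bigr]$, the correction term $z\,X\psi+2\psi\,Xz+(n+2)\psi z$ vanishes because $X\psi=-\frac{n+2}{2}\psi$ and $Xz=-\frac{n+2}{4}z$. Hence $\cB(\cu,\cv,Qz)\equiv 0\bmod Q$.

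The $\cu$-slot check is the main obstacle, and the place where the coefficient $-\frac{n-6}{16}$ is forced. Here I would use $\cnabla Q=2X$ together with the homogeneity of $\cv$ to expand $\lp\cnabla(Qz),\cnabla\cv\rp=2wz\cv+Q\lp\cnabla z,\cnabla\cv\rp$ and $(Qz)\cv=Q(z\cv)$, and then push each $Q$ through the ambient Laplacians using the commutator formula~\eqref{eqn:DeltakX} for $k=1,2$. The relevant weights make the $k=1$ correction on $\lp\cnabla z,\cnabla\cv\rp$ vanish and yield $\cDelta^2\lp\cnabla(Qz),\cnabla\cv\rp\equiv 2w\,\cDelta^2(z\cv)-8\,\cDelta\lp\cnabla z,\cnabla\cv\rp$ and $\cDelta^2\bigl(Q(z\cv)\bigr)\equiv 8\,\cDelta(z\cv)\bmod Q$; the residual $Q$-multiples appearing inside the divergences are then stripped off with~\eqref{eqn:divQ}. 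Collecting terms, the two $\cw\,\cDelta^2(z\cv)$ contributions cancel outright, and the surviving $\cdelta\bigl(\cDelta(z\cv)\,\cd\cw\bigr)$ and $\cw\,\cDelta\lp\cnabla z,\cnabla\cv\rp$ terms each appear with total coefficient $2w+\frac{n-6}{2}$, which is zero precisely because $w=-\frac{n-6}{4}$. Thus $\cB(Qz,\cv,\cw)\equiv 0\bmod Q$, and the bookkeeping confirms that the stated coefficient is the unique one producing this cancellation.

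With both slot checks in hand, Lemma~\ref{lem:tangential_polydifferential_Q} shows $\cB$ is tangential; since symmetrization preserves tangentiality, $\Sym\cB$ is tangential as well, and Lemma~\ref{lem:tangential_polydifferential} then gives that the induced operator $B$ is conformally covariant of bidegree $(-w,-w')=\bigl(\frac{n-6}{4},\frac{3n+6}{4}\bigr)$, as claimed. The only genuinely delicate part of the argument is the $\cu$-slot computation; the remaining steps are bookkeeping parallel to the $\sigma_2$-operator case of Theorem~\ref{thm:sigma2_operator}.
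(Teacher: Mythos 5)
Your proposal is correct and follows essentially the same route as the paper: exploit the symmetry of $\cB$ in its first two slots, verify tangentiality in the $\cw$-slot via~\eqref{eqn:divf} and in the $\cu$-slot via~\eqref{eqn:DeltakX} and~\eqref{eqn:divQ}, and conclude with Lemma~\ref{lem:tangential_polydifferential}. Your intermediate identities (the cancellation of the $\cDelta^2(z\cv)$ contributions and the surviving terms carrying the common factor $2w+\tfrac{n-6}{2}=0$) match the paper's computation exactly.
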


\begin{proof}
 First observe that $\cB(\cu,\cv,\cw)=\cB(\cv,\cu,\cw)$ for all $\cu,\cv,\cw\in\cmE\bigl[-\frac{n-6}{4}\bigr]$, so it suffices to show that $\cB$ is tangential in its first and third components.
 
 We first consider $\cB(\cu,\cv,Q\cw)$ for $\cu,\cv,Q\cw\in\cmE\bigl[-\frac{n-6}{4}\bigr]$.  Hence $\cw\in\cmE\bigl[-\frac{n+2}{4}\bigr]$.  Note that $\cDelta\lp\cnabla\cu,\cnabla\cv\rp,\cDelta^2(\cu\cv)\in\cmE\bigl[-\frac{n+2}{2}\bigr]$.  Let $\cf\in\cmE\bigl[-\frac{n+2}{2}\bigr]$.  It follows from~\eqref{eqn:divf} that $\cdelta\bigl(\cf\,\cd (Q\cw)\bigr)=O(Q)$, and hence $\cw\mapsto\cdelta\bigl(\cf\,\cd\cw\bigr)$ is tangential on $\cmE\bigl[-\frac{n-6}{4}\bigr]$.  Therefore $\cB$ is tangential in its third component.
 
 We now consider $\cB(Q\cu,\cv,\cw)$ for $\cv,\cw\in\cmE\bigl[-\frac{n-6}{4}\bigr]$ and $\cu\in\cmE\bigl[-\frac{n+2}{4}\bigr]$.  Using~\eqref{eqn:DeltakX} we compute that
 \begin{align*}
  \cDelta^2\lp\cnabla(Q\cu),\cnabla\cv\rp & = -\frac{n-6}{2}\cDelta^2(\cu\cv) - 8\cDelta\lp\cnabla\cu,\cnabla\cv\rp + Q\cDelta^2\lp\cnabla\cu,\cnabla\cv\rp , \\
  \cDelta^2(Q\cu\cv) & = 8\cDelta(\cu\cv) + Q\cDelta^2(\cu\cv) .
 \end{align*}
 Combining this with~\eqref{eqn:divQ} yields
 \begin{align*}
  \cw\cDelta^2\left(\lp\cnabla (Q\cu),\cnabla\cv\rp\right) - \cdelta\left(\cDelta^2(\cu\cv)\,\cd\cw\right) & = -8\cw\cDelta\lp\cnabla\cu,\cnabla\cv\rp - 8\cdelta\left(\cDelta(\cu\cv)\,\cd\cw\right) \\
   & \quad + Q\left(\cw\cDelta^2\lp\cnabla\cu,\cnabla\cv\rp - \cdelta\left(\cDelta^2(\cu\cv)\,\cd\cw\right)\right) , \\
  \cdelta\left(\cDelta\left(\lp\cnabla(Q\cu),\cnabla\cv\rp\right)\,\cd\cw\right) & = -\frac{n-6}{2}\cdelta\left(\cDelta(\cu\cv)\,\cd\cw\right) - \frac{n-6}{2}\cw\,\cDelta\lp\cnabla\cu,\cnabla\cv\rp \\
   & \quad + Q\cdelta\left(\cDelta\left(\lp\cnabla\cu,\cnabla\cv\rp\right)\,\cd\cw\right) .
 \end{align*}
 It follows that $\cB$ is tangential in its first component.
 
 Finally, since $\cB$ is tangential, its symmetrization $\Sym\cB$ is also tangential.  The final conclusion follows from Lemma~\ref{lem:tangential_polydifferential}.
\end{proof}

\begin{prop}
 \label{prop:operatorC}
 Let $(M^n,g)$ be a Riemannian manifold and let $(\cmG,\cg)$ be its ambient manifold.  Define $\cC\colon\bigl(\mE\bigl[-\frac{n-6}{4}\bigr]\bigr)^{3}\to\mE\bigl[-\frac{3n+6}{4}\bigr]$ by
 \begin{align*}
  \cC & = \Sym\left(\cH - \cF - \frac{2(n+2)}{3(n-2)}\cG - \frac{n-6}{6(n+2)}\cK\right) ,
 \end{align*}
 where $\Sym$ denotes symmetrization,
 \begin{align*}
  \cH(\cu,\cv,\cw) & := \cdelta\left((\cDelta u)(\cDelta v)\,\cd\cw\right) , \\
  \cF(\cu,\cv,\cw) & := \cDelta\left(\lp\cnabla u,\cnabla v\rp\,\cDelta w\right) , \\
  \cG(\cu,\cv,\cw) & := \cnabla^\alpha\left(\delta_{\alpha\beta\gamma}^{\lambda\mu\nu}\cw_\lambda\cu_\mu^\beta\cv_\nu^\gamma\right) , \\
  \cK(\cu,\cv,\cw) & := (\cDelta\cu)(\cDelta\cv)(\cDelta\cw) + 3\cDelta\left(\cw(\cDelta\cu)(\cDelta\cv)\right),
 \end{align*}
 and $\delta_{\alpha\beta\gamma}^{\lambda\mu\nu}$ is the generalized Kronecker symbol.  Then $\cC$ is tangential.  In particular, the operator $C$ induced by $\cC$ is a conformally covariant tridifferential operator of bidegree $\left(\frac{n-6}{4},\frac{3n+6}{4}\right)$.
\end{prop}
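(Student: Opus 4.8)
The plan is to prove tangentiality through Lemma~\ref{lem:tangential_polydifferential_Q}, exactly as in Proposition~\ref{prop:operatorB} and Theorem~\ref{thm:v3_operator}. Since $\cC=\Sym(\dotsb)$ is symmetric in its three arguments, it is tangential as soon as it is tangential in a single slot; and since each of $\cH,\cF,\cG,\cK$ is symmetric in its first two arguments, the substitution $Q z$ need only be tracked in one ``gradient'' slot and in the distinguished last slot. Mirroring the opening of the proof of Theorem~\ref{thm:v3_operator}, I would reduce to the diagonal: it suffices to show
\[ \cC(Qz,\cu,\cu) \equiv 0 \mod Q \]
for all $\cu\in\cmE\bigl[-\tfrac{n-6}{4}\bigr]$ and $z\in\cmE\bigl[-\tfrac{n+2}{4}\bigr]$, after which the polarization argument used there (substitute $\cu=\sum s_i\cu_i$, differentiate in $s_1 s_2$, and invoke symmetry) upgrades this to tangentiality in the first slot and hence, by symmetry, to full tangentiality. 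Writing $F$ for any of $\cH,\cF,\cG,\cK$, one has $\Sym F(Qz,\cu,\cu)\equiv\tfrac13\bigl(2F(Qz,\cu,\cu)+F(\cu,\cu,Qz)\bigr)\bmod Q$, so the whole computation reduces to evaluating the two quantities $F(Qz,\cu,\cu)$ and $F(\cu,\cu,Qz)$ modulo $Q$ for each of the four operators.

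The distinguished slot (where the $Q$-factor enters through $\cd\cw$ or $\cDelta\cw$) is handled by the commutator identities. The divergence term of $\cH$ reduces by~\eqref{eqn:divf} and~\eqref{eqn:divQ}, while the $\cDelta$-wrapped terms of $\cF$ and the second term of $\cK$ require the $k=1$ case of~\eqref{eqn:DeltakX}, through $\cDelta(Qz)=Q\cDelta z+(n+2)z$; in each case the result modulo $Q$ is an expression involving only first derivatives of $z$. For the gradient slots the essential input is the reduction of the ambient Hessian: using $\cnabla Q=2X$ together with $\cnabla X=\cg$ (equivalently $\cnabla^2Q=2\cg$), one computes
\[ \cnabla^2(Qz) \equiv 2X\otimes\cnabla z + 2(\cnabla z)\otimes X + 2z\,\cg \mod Q , \]
which is the analogue of~\eqref{eqn:T1} needed to evaluate $\cG(Qz,\cu,\cu)$. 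Contracting this against the generalized Kronecker symbol, the trace part $2z\,\cg$ collapses the rank of $\delta_{\alpha\beta\gamma}^{\lambda\mu\nu}$ via the contraction identity used in Lemma~\ref{lem:esp_props} (producing the factor of $n-2$ that explains the coefficient $\tfrac{2(n+2)}{3(n-2)}$), while the two $X$-terms simplify because $R(X,\cdot,\cdot,\cdot)=0$ by~\eqref{eqn:Xcurv} and because $\cnabla X=\cg$ turns the outer $\cnabla^\alpha$ into lower-order contractions. The Laplacian contributions of $\cH,\cF,\cK$ in the gradient slots are dispatched directly by~\eqref{eqn:DeltakX}.

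The main obstacle will be the bookkeeping for $\cG$ together with the final coefficient matching. After each of $\cH,\cF,\cG,\cK$ is reduced modulo $Q$, the surviving non-$Q$-divisible terms must cancel precisely in the combination $\cH-\cF-\tfrac{2(n+2)}{3(n-2)}\cG-\tfrac{n-6}{6(n+2)}\cK$; the coefficients $\tfrac{2(n+2)}{3(n-2)}$ and $\tfrac{n-6}{6(n+2)}$ are exactly those forcing this cancellation, so the computation is in effect a verification that the four residual expressions are linearly dependent with these weights. I expect~\eqref{eqn:Xcurv} and $\cnabla X=\cg$ to be what guarantee that no uncontrolled ambient curvature terms survive, so that the final cancellation is purely algebraic in $z$, $\cnabla z$, $\cu$, and its derivatives. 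Once $\cC(Qz,\cu,\cu)\equiv 0\bmod Q$ is established, tangentiality follows from Lemma~\ref{lem:tangential_polydifferential_Q}, and the assertion that the induced operator $C$ is conformally covariant of bidegree $\bigl(\tfrac{n-6}{4},\tfrac{3n+6}{4}\bigr)$ follows from Lemma~\ref{lem:tangential_polydifferential}.
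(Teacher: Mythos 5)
Your plan is correct and follows essentially the same route as the paper's proof: exploit the symmetry of $\cH,\cF,\cG,\cK$ in their first two slots, reduce via Lemma~\ref{lem:tangential_polydifferential_Q} to computing $F(Q\cz,\cu,\cv)$ and $F(\cu,\cv,Q\cz)$ modulo $Q$ for each summand using~\eqref{eqn:DeltakX}, \eqref{eqn:divf}, \eqref{eqn:divQ}, the Hessian identity $(Q\cz)_{\alpha\beta}\equiv 2X_\alpha\cz_\beta+2\cz_\alpha X_\beta+2\cz\cg_{\alpha\beta}$, Ricci-flatness and~\eqref{eqn:Xcurv}, and then verify that the residues cancel in the stated linear combination. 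The only (harmless) deviation is your extra polarization step reducing to the diagonal $\cC(Q\cz,\cu,\cu)$, which the paper uses in Theorem~\ref{thm:v3_operator} but not here, where it simply keeps $\cu,\cv$ distinct throughout.
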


\begin{proof}
 Observe that each of $\cH,\cF,\cG,\cK$ is symmetric in its first two components.  Given $\cu,\cv\in\cmE\bigl[-\frac{n-6}{4}\bigr]$ and $\cz\in\cmE\bigl[-\frac{n+2}{4}\bigr]$, it thus suffices to compute $\cH(\cu,\cv,Q\cz)$ and $\cH(Q\cz,\cu,\cv)$ modulo $Q$, and similarly for $\cF$, $\cG$ and $\cK$.  Throughout this proof we write $A\equiv B$ to denote that $A-B=O(Q)$.
 
 First consider $\cH$.  A straightforward computation gives
 \begin{align*}
  \cH(\cu,\cv,Q\cz) & \equiv 0 , \\
  \cH(Q\cz,\cu,\cv) & \equiv (n+2)\cdelta\left(\cz(\cDelta\cu)\,\cd\cv\right) - \frac{n-6}{2}\cv(\cDelta\cu)(\cDelta\cz) .
 \end{align*}
 
 Second consider $\cF$.  A straightforward computation gives
 \begin{align*}
  \cF(\cu,\cv,Q\cz) & \equiv (n+2)\left[\cDelta\left(\cz\lp\cnabla\cu,\cnabla\cv\rp\right) - \lp\cnabla\cu,\cnabla\cv\rp\,\cDelta\cz\right] \\
  \cF(Q\cz,\cu,\cv) & \equiv -(n+2)\lp\cnabla\cu,\cnabla\cz\rp\,\cDelta\cv - \frac{n-6}{2}\cDelta\left(\cu\cz\cDelta\cv\right) .
 \end{align*}
 
 Third consider $\cG$.  Since the ambient metric is Ricci flat, we compute that
 \[ \cG(\cu,\cv,\cw) = \delta_{\alpha\beta\gamma}^{\lambda\mu\nu}\cu_\lambda^\alpha\cv_\mu^\beta\cw_\nu^\gamma - \frac{1}{2}R^{\alpha\beta}{}_\gamma{}^\rho\delta_{\alpha\beta}^{\mu\nu}w_\mu(u_\rho v_\nu^\gamma + u_\nu^\gamma v_\rho) . \]
 Using the facts~\eqref{eqn:Xcurv} and
 \[ (Q\cz)_{\alpha\beta} \equiv 2X_\alpha \cz_\beta + 2\cz_\alpha X_\beta + 2\cz\cg_{\alpha\beta}, \]
 we conclude that
 \begin{multline*}
  \cG(\cu,\cv,Q\cz) \equiv \cG(Q\cz,\cu,\cv) \equiv \delta_{\alpha\beta\gamma}^{\lambda\mu\nu}\cu_\lambda^\alpha\cv_\mu^\beta(Q\cz)_\nu^\gamma \\ \equiv (n-2)\left[ 2\cz\sigma_2(\cnabla^2\cu,\cnabla^2\cv) + T_1(\cnabla^2\cu)(\cnabla\cv,\cnabla\cz) + T_1(\cnabla^2\cv)(\cnabla\cu,\cnabla\cz) \right] .
 \end{multline*}

 Fourth consider $\cK$.  A straightforward computation gives
 \begin{align*}
  \cK(\cu,\cv,Q\cz) & \equiv -2(n+2)\cz(\cDelta\cu)(\cDelta\cv) , \\
  \cK(Q\cz,\cu,\cv) & \equiv (n+2)\left[\cz(\cDelta\cu)(\cDelta\cv) + 3\cDelta(\cv\cz\cDelta\cu) - 3\cv(\cDelta\cz)(\cDelta\cu)\right] .
 \end{align*}

 Combining these formulae yields the desired result.
\end{proof}

The second step in our proof of Theorem~\ref{thm:rank4} is to identify the constant term in the restriction of the operators produced above to the underlying manifold.

\begin{prop}
 \label{prop:operator_scalars}
 Let $(M^n,g)$ be a Riemannian manifold and let $B$ and $C$ be the operators defined in Proposition~\ref{prop:operatorB} and Proposition~\ref{prop:operatorC}, respectively.  Then
 \begin{align*}
  B(1,1,1) & = \left(\frac{n-6}{4}\right)^3B_0 , \\
  C(1,1,1) & = \left(\frac{n-6}{4}\right)^3C_0 , 
 \end{align*}
 where
 \begin{align*}
  B_0 & := -\frac{3}{4}\Delta J^2 + \Delta\sigma_2 + \delta\left(T_1(\nabla J)\right) - \frac{n-6}{4}J^3 + \frac{n-6}{2}J\lv P\rv^2 - 6v_3, \\
  C_0 & := -\frac{2}{n+2}\Delta J^2 + \frac{2(n-6)}{3(n+2)}J^3 + \frac{4(n+2)}{n-2}v_3 .
 \end{align*}
\end{prop}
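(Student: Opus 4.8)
The plan is to reduce each identity to a computation on the ambient cone, exactly as in the proofs of Theorem~\ref{thm:sigma2_operator} and Theorem~\ref{thm:v3_operator}. Set $w=-\frac{n-6}{4}$. Since $\cB$ and $\cC$ are tangential on $\bigl(\cmE[w]\bigr)^3$, their induced operators satisfy $B(1,1,1)=\cB(t^w,t^w,t^w)$ and $C(1,1,1)=\cC(t^w,t^w,t^w)$, restricted to $\mG$ (i.e.\ $\rho=0$) and evaluated at $t=1$; symmetrization is irrelevant once all three slots carry the same density. So I would first substitute $\cu=\cv=\cw=t^w$ and only afterwards restrict to $\rho=0$.

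The first simplification comes from \eqref{eqn:gradient}: the inner product $\lp\cnabla t^w,\cnabla t^w\rp=\lv\cnabla t^w\rv^2=0$ vanishes identically. This kills the first term of $\cB$ and the term $\cw\cDelta^2\lp\cnabla\cu,\cnabla\cv\rp$, leaving $\cB(t^w,t^w,t^w)=\frac{n-6}{16}\cdelta\bigl(\cDelta^2 t^{2w}\,\cd t^w\bigr)$; in $\cC$ it annihilates $\cF$ entirely, so that only $\cH$, $\cG$, $\cK$ survive on the diagonal.

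The core of the argument is then a bookkeeping computation with the local ambient formulas of Lemma~\ref{lem:local_ambient_formulas}. For $B$, I would iterate \eqref{eqn:Laplacian}: first $\cDelta t^{2w}=2w\,t^{2w-2}\bigl(J-\rho\lv P\rv^2+\rho^2 Y\bigr)+o(\rho^2)$, then apply \eqref{eqn:Laplacian} once more (now with a nontrivial $\rho$-dependent coefficient, using \eqref{eqn:Deltarho-to-Delta} for the $\Delta_\rho$-correction) to obtain $\cDelta^2 t^{2w}$ to order $\rho$, and finally apply \eqref{eqn:divergence}. Restricting to $\rho=0$, $t=1$ produces an expression in $Y$, $\Delta\lv P\rv^2$, $\delta\bigl((Jg-2P)\,dJ\bigr)$, $J\lv P\rv^2$, and $J^3$, the $J\Delta J$ contributions cancelling between the $\partial_\rho$ and $J$ pieces of \eqref{eqn:divergence}. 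For $C$, the operators $\cH$ and $\cK$ need only $\cDelta t^w=w\,t^{w-2}\bigl(J-\rho\lv P\rv^2+\cdots\bigr)$, its square, and one application of \eqref{eqn:Laplacian}/\eqref{eqn:divergence}, yielding only $J^3$, $J\lv P\rv^2$, and $\Delta J^2$ terms. The term $\cG$ is the delicate one: rewriting it via Ricci-flatness as in Proposition~\ref{prop:operatorC}, and using the Hessian expansion \eqref{eqn:hessian}, the vanishing \eqref{eqn:Xcurv}, and \eqref{eqn:U}, I expect it to reduce to $\bigl(\partial_\rho+J\bigr)\sigma_2(U)$ at $\rho=0$, precisely the quantity handled in the proof of Theorem~\ref{thm:v3_operator} through $\sigma_2(U)=\sigma_2(P)-\rho\bigl(J\lv P\rv^2-Y\bigr)+o(\rho)$. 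This is what injects the third-order scalar $Y$, and hence the $v_3$-content, into $C(1,1,1)$.

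The final step is purely algebraic. I would use $\sigma_2=\frac12(J^2-\lv P\rv^2)$ and $T_1=Jg-P$ to repackage $-\Delta\lv P\rv^2+\delta\bigl((Jg-2P)\,dJ\bigr)$ as a combination of $\Delta J^2$, $\Delta\sigma_2$, and $\delta\bigl(T_1(\nabla J)\bigr)$, the only nontrivial input being $\delta(J\,dJ)=\frac12\Delta J^2$, and the relation $Y=3v_3-\frac12 J^3+\frac32 J\lv P\rv^2$ (from \eqref{eqn:Y} and the formula for $v_3$ in Section~\ref{sec:weight/renormalized}) to trade $Y$ for $v_3$. Matching the overall normalization $\bigl(\frac{n-6}{4}\bigr)^3$ then yields the stated $B_0$ and $C_0$. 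The main obstacle is the middle step: correctly tracking the $\rho$-expansions through the iterated ambient Laplacian (for $B$) and through $\cG$ (for $C$), where the coefficients together with the $\Delta_\rho$- and Hessian-corrections must all be kept to the right order; the algebra at the end is routine once $Y$ has appeared explicitly.
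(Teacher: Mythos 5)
Your proposal is correct and follows essentially the same route as the paper's proof: evaluate on $t^w$ using tangentiality, use $\lv\cnabla t^w\rv^2=0$ to kill the $\cF$-term and the first parts of $\cB$, iterate the local ambient formulas of Lemma~\ref{lem:local_ambient_formulas} to expand $\cDelta^2 t^{2w}$ and $(\cDelta t^w)^j$ to the needed order in $\rho$, reduce $\cG(t^w,t^w,t^w)$ to $(\partial_\rho+J)\sigma_2(U)$ and hence to $v_3$, and finish with the algebraic repackaging via $Y=3v_3-\tfrac12J^3+\tfrac32J\lv P\rv^2$. The paper's proof is exactly this computation carried out explicitly, including the $J\Delta J$ cancellation you anticipate.
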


\begin{proof}
 It follows from Lemma~\ref{lem:local_ambient_formulas} and the definition of $\cG$ from Proposition~\ref{prop:operatorC} that \[ \cG(t^w,t^w,t^w) = 2w^3t^{3(w-2)}(\partial_\rho+J)\sigma_2(U) + O(\rho) . \]Therefore
 \begin{equation}
  \label{eqn:cKt}
  \cG(t^w,t^w,t^w) = 6w^3t^{3(w-2)}v_3 + O(\rho) .
 \end{equation}
 
 We first apply Lemma~\ref{lem:local_ambient_formulas} to the operator $B$ defined by Proposition~\ref{prop:operatorB}.  Since $\cB$ is tangential, it holds that
 \[ B(1,1,1) = \left.\cB(t^w,t^w,t^w)\right|_{\rho=0,t=1} \]
 for $w=-\frac{n-6}{4}$.  Hence, by~\eqref{eqn:gradient} and~\eqref{eqn:divergence},
 \begin{equation}
  \label{eqn:B111}
  B(1,1,1) = -\frac{w^2}{4}\left.\left(\partial_\rho + J\right)\right|_{\rho=0,t=1}\cDelta^2t^{2w} .
 \end{equation}
 Applying~\eqref{eqn:divergence} again yields
 \[ \cDelta t^{2w} = 2wt^{2w-2}\left( J - \rho\lv P\rv^2 + \rho^2Y\right) + O(\rho^3) . \]
 Taking the ambient Laplacian of both sides via~\eqref{eqn:Laplacian} then yields
 \begin{multline*}
  \cDelta^2t^{2w} = 2wt^{2w-4}\biggl[ \Delta J - \frac{n-2}{2}J^2 \\ + \rho\Bigl(-\frac{3}{2}\Delta J^2 + 2\Delta\sigma_2 + 2\delta\left(T_1(\nabla J)\right) - J\Delta J - 4Y + nJ\lv P\rv^2\Bigr) \biggr] + O(\rho^2) .
 \end{multline*}
 Inserting this into~\eqref{eqn:B111} yields the desired formula for $B(1,1,1)$.
 
 We conclude by applying Lemma~\ref{lem:local_ambient_formulas} to the operator $C$ defined by Proposition~\ref{prop:operatorC}.  Since $\cC$ is tangential, it holds that
 \[ C(1,1,1) = \left.\cC(t^w,t^w,t^w)\right|_{\rho=0,t=1} \]
 for $w=-\frac{n-6}{4}$.  Hence, by~\eqref{eqn:gradient},
 \begin{multline}
  \label{eqn:C111}
  C(1,1,1) = \cdelta\left((\cDelta t^w)^2\,\cd t^w\right) \\ + \frac{2w}{3(n+2)}\left(\bigl(\cDelta t^w\bigr)^3 + 3\cDelta\bigl(t^w(\cDelta t^w)^2\bigr)\right) - \frac{2(n+2)}{3(n-2)}\cG(t^w,t^w,t^w) .
 \end{multline}
 We deduce from~\eqref{eqn:divergence} that
 \begin{align*}
  \bigl(\cDelta t^w\bigr)^2 & = w^2t^{2(w-2)}\left(J^2-2\rho J\lv P\rv^2\right) + O(\rho^2), \\
  \bigl(\cDelta t^w\bigr)^3 & = w^3t^{3(w-2)}J^3 + O(\rho) .
 \end{align*}
 Combining these with~\eqref{eqn:divergence} and~\eqref{eqn:Laplacian} yields
 \begin{align*}
  \left.\delta\bigl(\bigl(\cDelta t^w\bigr)^2\,\cd t^w\bigr)\right|_{\rho=0,t=1} & = w^3\left(J^3 - 2J\lv P\rv^2\right) , \\
  \left.\cDelta\left(t^w\bigl(\cDelta t^w\bigr)^2\right)\right|_{\rho=0,t=1} & = w^2\left(\Delta J^2 + (n+2)J\lv P\rv^2 - \frac{3n-2}{4}J^3\right) .
 \end{align*}
 Combining these observations with~\eqref{eqn:cKt} and~\eqref{eqn:C111} yields the desired formula for $C(1,1,1)$.
\end{proof}

The proof of Theorem~\ref{thm:rank4} will be complete once we check that $I_1$ and $I_2$ are CVIs and compute their rank.

\begin{proof}[Proof of Theorem~\ref{thm:rank4}]
 The authors previously showed~\cite{CaseLinYuan2016} that
 \begin{align*}
  L_1 & := -\Delta J^2 + \frac{n-6}{3}J^3 , \\
  L_2 & := -\Delta\lv P\rv^2 - 2\delta\left(P(\nabla J)\right) - \Delta J^2 + (n-6)J\lv P\rv^2
 \end{align*}
 and $v_3$ are CVIs of weight $-6$.  Observe that
 \begin{align*}
  B_0 & = -\frac{3}{4}L_1 + \frac{1}{2}L_2 - 6v_3 , \\
  C_0 & = \frac{2}{n+2}L_1 + \frac{4(n+2)}{n-2}v_3 .
 \end{align*}
 and
 \begin{align*}
  I_1 & = \frac{n+2}{2}C_0, \\
  I_2 & = \frac{3(n+2)}{8}C_0 - B_0 .
 \end{align*}
 As linear combinations of CVIs of the same weight are CVIs, we conclude that $B_0$ and $C_0$, and hence $I_1$ and $I_2$, are CVIs.
 
 Next, Proposition~\ref{prop:operator_scalars} implies that $I_1$ and $I_2$ both have rank at most four.  A straightforward computation shows that none of the operators $L_1^1$, $L_2^2$ or $L_3^3$ associated to these invariants vanishes.  Therefore $I_1$ and $I_2$ are both of rank four.
\end{proof}

\begin{remark}
 \label{rk:fsa}
 Combining Theorem~\ref{thm:rank4}, Theorem~\ref{thm:rank_existence} and Proposition~\ref{prop:operator_scalars} implies that the conformally covariant tridifferential operators $B$ and $C$ are formally self-adjoint.  This is not immediately clear from their construction; see the discussion surrounding Conjecture~\ref{conj:ovsienko_redou} below for further discussion of possible alternative proofs of formal self-adjointness. 
\end{remark}

\section{A family of rank $3$ conformally covariant operators}
\label{sec:ovsienko_redou}

Ovsienko and Redou~\cite{OvsienkoRedou2003} proved that for all $n\in\bN$ and all $k\in\bN_0$, and all but finitely many choices of $(\lambda,\mu)\in\bR^2$, there is a unique (up to multiplicative constant) conformally covariant bidifferential operator $D\colon\mE[\lambda]\times\mE[\mu]\to\mE[\lambda+\mu-2k]$ on the round $n$-sphere.  When $n>2k$, their result applies to the choice $\lambda=\mu=-\frac{n-2k}{3}$ which, by Lemma~\ref{lem:find_weights}, is the only operator in this family which can be formally self-adjoint.  In this section, we use the ambient metric to give a new construction of the Ovsienko--Redou operators with $\lambda=\mu=-\frac{n-2k}{3}$.  As a consequence, we construct curved analogues of these operators on all Riemannian manifolds of dimension $n\geq2k$.  This dimensional restriction ensures that the ambiguity of the ambient metric is not seen in our construction.

\begin{thm}
 \label{thm:ovsienko_redou}
 Let $(M^n,g)$ be a Riemannian manifold with ambient space $(\cmG,\cg)$ and let $k\leq n/2$ be a nonnegative integer.  Given $u,v\in\cmE\bigl[-\frac{n-2k}{3}\bigr]$, define
 \[ \cD_{2k}(u,v) := (-1)^k\sum_{s=0}^k\sum_{t=0}^{k-s} a_{k-s-t,s,t}\cDelta^{k-s-t}\left(\bigl(\cDelta^su\bigr)\bigl(\cDelta^tv\bigr)\right) , \]
 where $\cDelta$ is the Laplacian of $\cg$ and
 \[ a_{r,s,t} = \frac{k!}{r!s!t!}\frac{\Gamma\bigl(\frac{n+4k}{6}-r\bigr)\Gamma\bigl(\frac{n+4k}{6}-s\bigr)\Gamma\bigl(\frac{n+4k}{6}-t\bigr)}{\Gamma\bigl(\frac{n-2k}{6}\bigr)\Gamma\bigl(\frac{n+4k}{6}\bigr)^2} . \]
 for $k:=r+s+t$.  Then $\cD_{2k}$ is tangential, and in particular induces a conformally covariant bidifferential operator $D_{2k}$ of bidegree $\left(\frac{n-2k}{3},\frac{2n+2k}{3}\right)$.
\end{thm}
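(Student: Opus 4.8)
The plan is to prove tangentiality via Lemma~\ref{lem:tangential_polydifferential_Q} and then read off the induced operator $D_{2k}$ and its bidegree from Lemma~\ref{lem:tangential_polydifferential}. First I would note that $a_{r,s,t}$ is symmetric under every permutation of its indices, so $\cD_{2k}$ is symmetric in $u$ and $v$; hence by Lemma~\ref{lem:tangential_polydifferential_Q} it suffices to verify tangentiality in the first slot. Writing $w=-\frac{n-2k}{3}$, the goal becomes
\[ \cD_{2k}(Qz,v) \equiv 0 \mod Q \qquad\text{for all } z\in\cmE[w-2],\ v\in\cmE[w]. \]
It is convenient to abbreviate $\alpha=\frac{n+4k}{6}$ and $\beta=\frac{n-2k}{6}$; a direct computation gives $2w+n=2\alpha$ and $4w+n-4k=-2\alpha$, and these are the only facts about $w$ that will enter.

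The engine of the reduction is the commutator formula~\eqref{eqn:DeltakX}, which I would apply in two guises (writing the exponent as $m$ to avoid clashing with the fixed $k$). Because all functions are homogeneous, $(2X+n+4-2m)$ acts as a scalar on each factor, so the formula yields, first, the exact splitting
\[ \cDelta^s(Qz) = Q\,\cDelta^s z + 2s(2w+n-2s)\,\cDelta^{s-1}z =: QA_1 + A_2, \]
and, second, for a homogeneous factor $\Psi$ of weight $2w-2-2(k-r)$,
\[ \cDelta^r(Q\Psi) \equiv 2r(4w+n-4k+2r)\,\cDelta^{r-1}\Psi \mod Q, \]
since $Q\,\cDelta^r\Psi$ stays in the ideal $(Q)$ and only the single commutator term survives. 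Applying these to each summand $\cDelta^r\bigl((\cDelta^s(Qz))(\cDelta^t v)\bigr)$, with $\Psi=(\cDelta^s z)(\cDelta^t v)$ in the second formula, expresses $\cD_{2k}(Qz,v)$, modulo $Q$, as an explicit combination of the ``monomials'' $\cDelta^{e}\bigl((\cDelta^{p}z)(\cDelta^{q}v)\bigr)$ with $p+q+e=k-1$.

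The heart of the argument is then to collect the coefficient of each such monomial. Exactly two contributions arise: the $A_2$–term of the summand $(r,s,t)=(e,p+1,q)$ and the $QA_1$–term of the summand $(r,s,t)=(e+1,p,q)$. Using $2w+n=2\alpha$ and $4w+n-4k=-2\alpha$, the total coefficient is
\[ 4(p+1)(\alpha-p-1)\,a_{e,p+1,q} - 4(e+1)(\alpha-e-1)\,a_{e+1,p,q}. \]
Here the Gamma–function coefficients do precisely their job: from $\Gamma(\alpha-p)=(\alpha-p-1)\Gamma(\alpha-p-1)$ and $(p+1)!=(p+1)p!$ one checks that both products equal
\[ \frac{k!}{e!\,p!\,q!}\,\frac{\Gamma(\alpha-e)\,\Gamma(\alpha-p)\,\Gamma(\alpha-q)}{\Gamma(\beta)\,\Gamma(\alpha)^2}, \]
so the bracket vanishes for every admissible $(p,q,e)$. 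Consequently the mod-$Q$ reduction collapses term by term and $\cD_{2k}(Qz,v)$ lies in $(Q)$, as required.

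The main obstacle is the bookkeeping in the middle paragraph rather than the final identity: one must be careful that the two reduction formulas account for all cross terms, that the residual $Q\,\cDelta^{(\cdot)}(\cdots)$ pieces genuinely remain in the ideal, and that the homogeneity of $z$ and $v$ is what collapses $(2X+n+4-2m)$ to a scalar at each stage. I would emphasize that I only need each monomial coefficient to vanish (which I verify), not any linear independence of the monomials modulo $Q$, so the conclusion is immediate once the telescoping Pochhammer identity above is established. Finally, tangentiality plus Lemma~\ref{lem:tangential_polydifferential} gives the conformally covariant bidifferential operator $D_{2k}$ of bidegree $\bigl(\tfrac{n-2k}{3},\tfrac{2n+2k}{3}\bigr)$, the second entry being $-w'$ for the output weight $w'=2w-2k=-\tfrac{2n+2k}{3}$.
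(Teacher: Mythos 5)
Your proposal is correct and follows essentially the same route as the paper: symmetry of $a_{r,s,t}$ reduces to checking one slot, the commutator formula~\eqref{eqn:DeltakX} produces exactly the two-term reduction the paper states, and the Gamma-function identity $(p+1)(\alpha-p-1)a_{e,p+1,q}=(e+1)(\alpha-e-1)a_{e+1,p,q}$ is precisely the cancellation the paper summarizes as ``the definition of $a_{r,s,t}$.'' You merely spell out the coefficient bookkeeping that the paper leaves implicit.
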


\begin{remark}
 \label{rk:symmetry}
 Note that $a_{r,s,t}$ is symmetric in $r,s,t$; i.e.
 \[ a_{r,s,t} = a_{r,t,s} = a_{s,r,t} = a_{s,t,r} = a_{t,r,s} = a_{t,s,r} . \] 
\end{remark}

\begin{proof}
 Remark~\ref{rk:symmetry} implies that $\cD_{2k}(u,v)=\cD_{2k}(v,u)$.  It thus suffices to show that $\cD_{2k}(Qu,v)\equiv0\mod Q$ for all $v\in\cmE\bigl[-\frac{n-2k}{3}\bigr]$ and $u\in\cmE\bigl[-\frac{n-2k+6}{3}\bigr]$.  This follows immediately from the identity
 \begin{multline*}
  \cDelta^r\left((\cDelta^s Qu)\cDelta^tv\right) \equiv 2s\left(\frac{n+4k}{3}-2s\right)\cDelta^{k-s-t}\left((\cDelta^{s-1}u)\cDelta^tv\right) \\ - 2r\left(\frac{n+4k}{3} - 2r\right)\cDelta^{r-1}\left((\cDelta^s u)\cDelta^tv\right) \mod Q
 \end{multline*}
 and the definition of $a_{r,s,t}$.  Since $\cD_{2k}$ is tangential, Lemma~\ref{lem:tangential_polydifferential} implies that it induces a conformally covariant operator of bidegree $\left(\frac{n-2k}{3},\frac{2n+2k}{3}\right)$.
\end{proof}

The symmetries of Remark~\ref{rk:symmetry} imply that $\cD_{2k}$ is formally self-adjoint as an operator on $(\cmG,\cg)$.  For this reason we expect that the induced operator $D_{2k}$ is formally self-adjoint on $(M,g)$.

\begin{conj}
 \label{conj:ovsienko_redou}
 Let $(M^n,g)$ be a Riemannian manifold and let $k\leq n/2$ be a positive integer.  Then the operator $D_{2k}$ of Theorem~\ref{thm:ovsienko_redou} is formally self-adjoint.
\end{conj}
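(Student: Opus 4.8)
The plan is to leverage the self-adjointness of $\cD_{2k}$ on the ambient space, which is already granted by the symmetry of $a_{r,s,t}$ (Remark~\ref{rk:symmetry}), and to concentrate all the work on descending this property to $(M,g)$, which is the genuine difficulty. By Remark~\ref{rk:symmetry} the induced operator $D_{2k}$ is symmetric in its two arguments, so the trilinear form $(u_0,u_1,u_2)\mapsto\int_M u_0\,D_{2k}(u_1,u_2)\,\dvol_g$ of Definition~\ref{defn:formally_self-adjoint} is automatically symmetric in $(u_1,u_2)$; since the transpositions $(u_1\,u_2)$ and $(u_0\,u_1)$ generate $S_3$, it suffices to prove invariance under swapping $u_0$ and $u_1$. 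Using Lemma~\ref{lem:tangential_polydifferential} and the identification $\mE[-n]\cong_g C^\infty(M)$ coming from~\eqref{eqn:density_transformation}, this reduces to showing that the weight $-n$ density $f_0\,\cD_{2k}(f_1,f_2)-f_1\,\cD_{2k}(f_0,f_2)$ has vanishing integral over $M$, where $f_i\in\cmE\bigl[-\frac{n-2k}{3}\bigr]$ are any homogeneous extensions of the $u_i$.

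First I would record the upstairs identity. Integrating each outer power $\cDelta^{k-s-t}$ by parts onto the first factor, using that $\cDelta$ is self-adjoint, and invoking the full symmetry of $a_{r,s,t}$, one obtains
\begin{equation*}
 f_0\,\cD_{2k}(f_1,f_2) \equiv (-1)^k\!\!\sum_{r+s+t=k} a_{r,s,t}\,(\cDelta^r f_0)(\cDelta^s f_1)(\cDelta^t f_2) \pmod{\im\cdelta},
\end{equation*}
whose right-hand side is totally symmetric in $f_0,f_1,f_2$. Hence $f_0\,\cD_{2k}(f_1,f_2)-f_1\,\cD_{2k}(f_0,f_2)=\cdelta(\widetilde V)$ for an explicit ambient $1$-form $\widetilde V$ of weight $2-n$ assembled from the boundary terms of these integrations by parts. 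Since $\cD_{2k}$ is tangential, $\cdelta(\widetilde V)\rv_{\mG}$ is a well-defined weight $-n$ density, independent of the chosen extensions.

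The crux is the descent: one must show $\int_M\cdelta(\widetilde V)\rv_{\mG}\,\dvol_g=0$. Writing $\widetilde V=\widetilde V^t\partial_t+\widetilde V^\rho\partial_\rho+\widetilde V^i\partial_i$ in the coordinates of~\eqref{eqn:ambient_metric}, the $\partial_i$ part contributes a divergence on $M$ that integrates to zero, while expanding $\cdelta$ against $\dvol_{\cg}=t^{n+1}(\det g_\rho/\det g)^{1/2}\,dt\,d\rho\,\dvol_g$ and using $\partial_\rho\log(\det g_\rho)^{1/2}\rv_{\rho=0}=J$ reduces the claim to an identity of the form
\begin{equation*}
 \int_M \Bigl[\widetilde V^t + \bigl(\partial_\rho+J\bigr)\widetilde V^\rho\Bigr]\big|_{\mG}\,\dvol_g = 0,
\end{equation*}
evaluated at $t=1$. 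The operator $\partial_\rho+J$ here is exactly the normal operator governing ambient divergences in~\eqref{eqn:divergence}, which is the mechanism by which one hopes to recast the $\widetilde V^\rho$ term as itself a tangential divergence. I expect the main obstacle to lie precisely at this point: because $\mG=\{Q=0\}$ is a null hypersurface with $X$ both tangent and normal (as $Q=\lv X\rv^2$ vanishes on $\mG$), there is no naive divergence theorem transverse to $\mG$, so the coupling between the radial ($\widetilde V^t$) and normal ($\widetilde V^\rho$) contributions must be controlled using the precise structure of the ambient expansion in Lemma~\ref{lem:ambient}.

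To make this tractable I would first establish the descent identity in the critical dimension $n=2k$, where the $f_i\rv_{\mG}=u_i$ are ordinary functions and the radial weight drops out, and then attempt to propagate the result to $n>2k$ by treating $n$ as a formal parameter in the rational coefficients $a_{r,s,t}(n)$, in the spirit of Branson's analytic continuation. As a consistency check on the explicit form of $\widetilde V$ and on the general combinatorics, I would verify the reduced identity directly for $k\le2$, recovering the formal self-adjointness already confirmed for $D_2$ and $D_4$; matching these low-order computations is the natural way to detect an error before committing to the full descent.
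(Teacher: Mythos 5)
The statement you are addressing is left \emph{open} in the paper: it is stated as a conjecture, and the authors' only evidence is the manifest formal self-adjointness of $\cD_{2k}$ upstairs (Remark~\ref{rk:symmetry}) together with the explicit verification for $k\le 2$ in Theorem~\ref{thm:eval_ovsienko_redou}, obtained by computing $D_2$ and $D_4$ in closed form via Lemma~\ref{lem:local_ambient_formulas} and inspecting the result. The paper explicitly suggests that a proof will likely require generalizing the Fefferman--Graham/Juhl--Krattenthaler combinatorial argument used for the GJMS operators. Your proposal is therefore not being compared against a proof in the paper, because there is none; it must stand on its own, and it does not.

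The genuine gap is the step you yourself flag as "the crux." The upstairs identity $f_0\,\cD_{2k}(f_1,f_2)\equiv(-1)^k\sum a_{r,s,t}(\cDelta^rf_0)(\cDelta^sf_1)(\cDelta^tf_2)$ modulo $\im\cdelta$ is immediate from the symmetry of $a_{r,s,t}$, but it buys essentially nothing: the same statement holds verbatim for $\cDelta^k$ on $\cmE[-\frac{n-2k}{2}]$, and yet the formal self-adjointness of the induced GJMS operators resisted a direct ambient proof for two decades precisely because an ambient divergence $\cdelta(\widetilde V)$ restricted to $\mG$ and then to $M$ is \emph{not} in general a divergence on $M$ --- the normal component contributes a term of the schematic form $(\partial_\rho+J)\widetilde V^\rho$ (cf.~\eqref{eqn:divergence}), which is a nontrivial local invariant whose total integral need not vanish. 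Your plan reduces the conjecture to showing that this contribution integrates to zero, acknowledges that the null geometry of $\mG$ obstructs any naive divergence theorem, and then stops. Controlling that obstruction is exactly the combinatorial content of the Fefferman--Graham and Juhl--Krattenthaler work in the linear case, and no analogue of their identity is supplied here. The consistency checks you propose ($k\le2$, critical dimension, analytic continuation in $n$) are reasonable sanity tests but do not close the argument. As written, the proposal is a correct framing of why the conjecture is hard, not a proof of it.
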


Fefferman and Graham~\cite{FeffermanGraham2013} recently gave a direct proof that the GJMS operators are formally self-adjoint using only their ambient construction~\cite{GJMS1992} and a clever combinatorial argument found by Juhl and Krattenthaler~\cite{Juhl2013}.  Alternative proofs of the formal self-adjointness of the GJMS operators involve their relation to scattering theory~\cite{GrahamZworski2003} or their definition through formal properties of Poincar\'e manifolds~\cite{FeffermanGraham2002}.  It seems more likely to us that a generalization of the direct argument of Fefferman--Graham and Juhl--Krattenthaler will verify Conjecture~\ref{conj:ovsienko_redou}.

It is straightforward to derive formulae for the Ovsienko--Redou operators for small $k$, and thereby verify Conjecture~\ref{conj:ovsienko_redou} in these cases.

\begin{thm}
 \label{thm:eval_ovsienko_redou}
 Let $(M^n,g)$ be a Riemannian manifold.  If $n\geq2$, then
 \begin{equation}
  \label{eqn:eval_ovsienko_redou1}
  D_2(u,v) = -\Delta(uv) - u\Delta v - v\Delta u + \frac{4(n-2)}{3}Juv .
 \end{equation}
 If $n\geq3$, then
 \begin{equation}
  \label{eqn:eval_ovsienko_redou2}
  \begin{split}
   D_4(u,v) & = \Delta^2(uv) + u\Delta^2v + v\Delta^2 u \\
    & \quad + \frac{2(n-4)}{n+2}\left(\Delta(u\Delta v + v\Delta u) + (\Delta u)(\Delta v)\right) \\
    & \quad - \frac{2(4n^2-17n+22)}{3(n+2)} \bigl( \delta\left(J\,d(uv)\right) + u\delta\left(J\,dv\right) + v\delta\left(J\,du\right) \bigr) \\
    & \quad + \frac{2(n+2)}{3}\bigl( \delta\left(P(\nabla(uv))\right) + u\delta\left(P(\nabla v)\right) + v\delta\left(P(\nabla u)\right) \bigr) \\
    & \quad + \frac{8(n-1)(n-4)}{3(n+2)}Q_4uv + \frac{8(n-4)^3}{9(n+2)}\sigma_2uv .
  \end{split}
 \end{equation}
 In particular, both $D_1$ and $D_2$ are formally self-adjoint.
\end{thm}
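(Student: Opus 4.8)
The plan is to exploit the tangentiality of $\cD_{2k}$ established in Theorem~\ref{thm:ovsienko_redou} to reduce the computation of $D_{2k}$ to a restriction calculation on $\mG$. Concretely, by Lemma~\ref{lem:tangential_polydifferential} and the identification $\cmE[w]\cong_g C^\infty(M)$ we have
\[ D_{2k}(u,v) = \left.\cD_{2k}\bigl(t^wu,t^wv\bigr)\right|_{t=1,\rho=0} \]
for $w=-\tfrac{n-2k}{3}$, where on the right $u,v$ denote the constant (in $\rho$) extensions. Thus the whole computation reduces to applying the local formulas of Lemma~\ref{lem:local_ambient_formulas}, chiefly~\eqref{eqn:Laplacian} and~\eqref{eqn:divergence}, to iterated ambient Laplacians of products of the densities $t^w$. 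Once~\eqref{eqn:eval_ovsienko_redou1} and~\eqref{eqn:eval_ovsienko_redou2} are in hand, the formal self-adjointness of $D_2$ and $D_4$ will be read off directly, as explained in the last paragraph.

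First I would treat $D_2$. Evaluating the coefficients for $k=1$ is immediate: since $\tfrac{n+4}{6}-1=\tfrac{n-2}{6}$, the Gamma-quotient telescopes and $a_{1,0,0}=a_{0,1,0}=a_{0,0,1}=1$, so that $\cD_2(u,v)=-\bigl[\cDelta(uv)+(\cDelta u)v+u(\cDelta v)\bigr]$. Applying~\eqref{eqn:Laplacian} to each summand and evaluating at $\rho=0$ (where the $\partial_\rho$ terms drop out because the extensions are $\rho$-independent) produces $\Delta(uv)+(\Delta u)v+u(\Delta v)$ together with the weighted zeroth-order contributions $2wJuv$, $wJuv$, $wJuv$. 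Summing and substituting $w=-\tfrac{n-2}{3}$ yields~\eqref{eqn:eval_ovsienko_redou1}.

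The substance of the theorem is the computation of $D_4$. For $k=2$ the same telescoping gives $a_{2,0,0}=a_{0,2,0}=a_{0,0,2}=1$ and $a_{1,1,0}=a_{1,0,1}=a_{0,1,1}=\tfrac{2(n-4)}{n+2}$, so that
\[ \cD_4(u,v)=\cDelta^2(uv)+(\cDelta^2u)v+u(\cDelta^2v)+\frac{2(n-4)}{n+2}\Bigl[\cDelta\bigl((\cDelta u)v+u(\cDelta v)\bigr)+(\cDelta u)(\cDelta v)\Bigr]. \]
The difficulty is that every doubled Laplacian forces one to carry the $\rho$-expansion of the \emph{first} application of $\cDelta$ to order $\rho^1$: the outer $\cDelta$ in~\eqref{eqn:Laplacian} extracts a factor $\partial_\rho(\,\cdot\,)|_{\rho=0}$, so the $O(\rho)$ corrections in~\eqref{eqn:divergence} and~\eqref{eqn:Deltarho-to-Delta}, together with the $\rho$-linear curvature terms of~\eqref{eqn:Laplacian}, all feed into the final answer. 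I would organize the bookkeeping by order in derivatives of $u,v$: the fourth-order part reassembles into $\Delta^2(uv)+u\Delta^2v+v\Delta^2u$ and the $\tfrac{2(n-4)}{n+2}$-block; the second-order part collects into the divergence expressions involving $J$ and $P$, using~\eqref{eqn:Deltarho-to-Delta} to convert $\Delta_\rho$ into $\Delta+\rho[\delta(Jg-2P)d-J\Delta]+o(\rho)$; and the zeroth-order part is recognized as a multiple of $Q_4$ and $\sigma_2$ after substituting the standard formula expressing $Q_4$ in terms of $\Delta J$, $\lv P\rv^2$, and $J^2$ (and $\sigma_2=\tfrac12(J^2-\lv P\rv^2)$). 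Matching coefficients yields~\eqref{eqn:eval_ovsienko_redou2}. This curvature bookkeeping, and in particular the correct isolation of the $Q_4$- and $\sigma_2$-terms, is the step I expect to be the main obstacle.

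Finally, the formal self-adjointness of $D_2$ and $D_4$ follows by inspection of the trilinear form $\kD(u_0,u_1,u_2)=\int_M u_0\,D(u_1,u_2)\,\dvol$. By Remark~\ref{rk:symmetry} the operators are already symmetric in their two arguments, so it suffices to exhibit symmetry under the interchange of $u_0$ and $u_1$, which together with the existing $(u_1,u_2)$-symmetry generates full symmetry. Each structural block of~\eqref{eqn:eval_ovsienko_redou1}--\eqref{eqn:eval_ovsienko_redou2} is manifestly of this type once integrated by parts: using self-adjointness of $\Delta$ and $\Delta^2$ to move the Laplacians off $u_0$ turns $\Delta^2(u_1u_2)+u_1\Delta^2u_2+u_2\Delta^2u_1$ into the totally symmetric sum $(\Delta^2u_0)u_1u_2+u_0(\Delta^2u_1)u_2+u_0u_1(\Delta^2u_2)$; integrating the $J$- and $P$-divergence blocks by parts produces totally symmetric expressions such as $2\int_M J\bigl(u_0\lp\nabla u_1,\nabla u_2\rp+\text{cyclic}\bigr)\,\dvol$; and the $Q_4\,u_1u_2$ and $\sigma_2\,u_1u_2$ terms are symmetric multiplication operators. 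Hence both trilinear forms are symmetric in all three arguments, which is the asserted formal self-adjointness.
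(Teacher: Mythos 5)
Your proposal follows the paper's proof essentially verbatim: both reduce $D_{2k}$ to a restriction computation via tangentiality, evaluate the telescoping Gamma quotients to get $a_{1,0,0}=1$ for $k=1$ and $a_{2,0,0}=1$, $a_{1,1,0}=\tfrac{2(n-4)}{n+2}$ for $k=2$, and then apply the local ambient formulas of Lemma~\ref{lem:local_ambient_formulas} (tracking the $O(\rho)$ terms through the iterated Laplacians) to assemble~\eqref{eqn:eval_ovsienko_redou1} and~\eqref{eqn:eval_ovsienko_redou2}. The only differences are cosmetic: you leave the $D_4$ curvature bookkeeping as an organized plan where the paper carries it out in full, and you make explicit the integration-by-parts verification of total symmetry that the paper leaves as an inspection of the final formulas.
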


\begin{proof}
 By Definition,
 \[ \cD_2(\cu,\cv) = -\cDelta(\cu \cv) - \cu\cDelta\cv - \cv\cDelta\cu \]
 for $\cu,\cv\in\cmE\bigl[-\frac{n-2}{3}\bigr]$.  Let $u,v\in C^\infty(M)$ and set $\cu=t^{-\frac{n-2}{3}}u$ and $\cv=t^{-\frac{n-2}{3}}v$, where $u,v$ are extended to $\cmG$ by requiring that $\partial_tu=\partial_\rho u=0$ and $\partial_tv=\partial_\rho v=0$.  It follows from~\eqref{eqn:Laplacian} that
 \[ \left.\cD(\cu,\cv)\right|_{t=1,\rho=0} = -\Delta(uv) - u\Delta v - v\Delta u + \frac{4(n-2)}{3}Juv . \]
 Equation~\eqref{eqn:eval_ovsienko_redou1} follows from the fact that $\cD_2$ is tangential.
 
 By definition,
 \[ \cD_4(\cu,\cv) = \cDelta^2(\cu\cv) + \cu\cDelta^2\cv + \cv\cDelta^2\cu + \frac{2(n-4)}{n+2}\left[ \cDelta\left(\cu\cDelta\cv + \cv\cDelta\cu\right) + (\cDelta\cu)(\cDelta\cv) \right] \]
 for $\cu,\cv\in\cmE\bigl[-\frac{n-4}{3}\bigr]$.  Let $u,v\in C^\infty(M)$ and set $\cu=t^{-\frac{n-4}{3}}u$ and $\cv=t^{-\frac{n-4}{3}}v$, where $u,v$ are extended to $\cmG$ by requiring that $\partial_tu=\partial_\rho u=0$ and $\partial_tv = \partial_\rho v = 0$.  It follows from~\eqref{eqn:Laplacian} that
 \begin{align*}
  \left.\cDelta^2(\cu\cv)\right|_{t=1,\rho=0} & = \biggl[ \Delta^2 - \frac{n-4}{3}J\Delta - \frac{2(n-4)}{3}\Delta J - \frac{n+2}{3}\delta(Jg-2P)d \\
   & \qquad + \frac{4(n-1)(n-4)}{9}J^2 - \frac{2(n+2)(n-4)}{9}\lv P\rv^2\biggr](uv) , \\
  \left.\cDelta^2\cu\right|_{t=1,\rho=0} & = \biggl[ \Delta^2 - \frac{2(n-4)}{3}J\Delta - \frac{n-4}{3}\Delta J + \frac{n-10}{3}\delta(Jg-2P)d \\
   & \qquad + \frac{(n+2)(n-4)}{9}J^2 + \frac{(n-4)(n-10)}{9}\lv P\rv^2 \biggr] u ,
 \end{align*}
 where the quantities in the square brackets are compositions of operators; e.g.\ $[\Delta J](u):=\Delta(Ju)$.  Applying~\eqref{eqn:Laplacian} again yields
 \begin{align*}
  \left.\cDelta\left(\cu\cDelta\cv\right)\right|_{t=1,\rho=0} & = \Delta(u\Delta v) - \frac{n-4}{3}Ju\Delta v - \frac{n+2}{3}u\delta\left((Jg-2P)(\nabla v)\right) \\
   & \quad + \frac{n-4}{3}\left[ -\Delta J + \frac{2(n-1)}{3}J^2 - \frac{n+2}{3}\lv P\rv^2\right](uv) , \\
  \left.(\cDelta\cu)(\cDelta\cv)\right|_{t=1,\rho=0} & = (\Delta u)(\Delta v) - \frac{n-4}{3}Ju\Delta v - \frac{n-4}{3}Jv\Delta u + \left(\frac{n-4}{3}\right)^2J^2uv .
 \end{align*}
 Combining the previous two displays and using the fact that $\cD_4$ is tangential yields
 \begin{align*}
  D_4(u,v) & = \Delta^2(uv) + u\Delta^2v + v\Delta^2 u + \frac{2(n-4)}{n+2}\left(\Delta(u\Delta v + v\Delta u) + (\Delta u)(\Delta v)\right) \\
   & \quad - \frac{2(n-2)(n-4)}{n+2}\Bigl(\Delta(Juv) + Ju\Delta v + Jv\Delta u\Bigr) \\
   & \quad - \frac{n-4}{3}\Bigl( (u\Delta(Jv) + v\Delta(Ju) + J\Delta(uv)\Bigr) \\
   & \quad - \frac{n+2}{3}\Bigl(\delta(Jg-2P)d(uv) + u\delta(Jg-2P)dv + v\delta(Jg-2P)du \Bigr) \\
   & \quad + \frac{4(n-4)(4n^2-11n+16)}{9(n+2)}J^2uv - \frac{4(n+2)(n-4)}{9}\lv P\rv^2uv .
 \end{align*}
 Rewriting this using the identities
 \begin{align*}
  \Delta(Juv) + Ju\Delta v + Jv\Delta u - uv\Delta J & = \delta\left(J\,d(uv)\right) + u\delta\left(J\,dv\right) + v\delta\left(J\,du\right) , \\
  u\Delta(Jv) + v\Delta(Ju) + J\Delta(uv) - 2uv\Delta J & = \delta\left(J\,d(uv)\right) + u\delta\left(J\,dv\right) + v\delta\left(J\,du\right)
 \end{align*}
 yields~\eqref{eqn:eval_ovsienko_redou2}.
\end{proof}

\section{A family of critical-order operators}
\label{sec:comments}

We now give our final ambient obstruction of formally self-adjoint conformally covariant polydifferential operators.  First, we construct a family of $\ell$-differential operators, $\ell\equiv 3\mod 4$, of all critical orders.  Second, we construct a family of $j$-differential operators, $j\equiv 5\mod 6$, of all critical orders, assuming Conjecture~\ref{conj:ovsienko_redou} holds.

In terms of the ambient metric, these operators are obtained by considering certain combinations of the divergence, powers of the Laplacian, and the inner product of gradients.  This task is greatly simplified by the following proposition.

\begin{prop}
 \label{prop:reduction}
 Let $(M^n,g)$ be a Riemannian manifold and suppose that
 \[ \cD \colon \bigl(\cmE\left[-2\right]\bigr)^{\ell} \to \cmE\left[2-n\right] \]
 is a tangential operator in the ambient space which induces a formally self-adjoint operator on $(M^n,g)$.  Define
 \[ \cF \colon \bigl(\cmE\left[0\right]\bigr)^{2\ell+1} \to \cmE\left[-n\right] \]
 by
 \[ \cF(u_1,\dotsc,u_\ell,v_1,\dotsc,v_\ell,w) := \cdelta\left( \cD(\lp\cnabla u_1,\cnabla v_1\rp,\dotsc,\lp\cnabla u_\ell,\cnabla v_\ell\rp)\,\cd w\right) . \]
 Then $\cF$ is tangential and $\Sym\cF$ induces a formally self-adjoint conformally covariant polydifferential operator on $(M^n,g)$.
\end{prop}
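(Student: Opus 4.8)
The plan is to prove tangentiality of $\cF$ by the defining-function criterion of Lemma~\ref{lem:tangential_polydifferential_Q}, and then to deduce formal self-adjointness by collapsing the outer divergence onto $\mG$, integrating by parts on $M$, and invoking the assumed self-adjointness of the operator $D$ induced by $\cD$ together with a combinatorial averaging. Write $\cX := \cD(\lp\cnabla u_1,\cnabla v_1\rp,\dotsc,\lp\cnabla u_\ell,\cnabla v_\ell\rp)\in\cmE[2-n]$, so that $\cF = \cdelta(\cX\,\cd w)$, and note first that $\lp\cnabla u_i,\cnabla v_i\rp\in\cmE[-2]$ and $\cF$ maps into $\cmE[-n]$, so by Lemma~\ref{lem:tangential_polydifferential} a tangential $\Sym\cF$ induces a conformally covariant operator $F$ of bidegree $(0,n)$.

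For tangentiality I would test each slot. In the $w$-slot, applying~\eqref{eqn:divf} to $\cdelta(\cX\,\cd(Qz))$ with $z\in\cmE[-2]$ produces the correction term $2\bigl(z\,X\cX + 2\cX\,Xz + (n+2)\cX z\bigr)$, whose coefficient $(2-n)-4+(n+2)=0$ vanishes by the weights of $\cX$ and $z$; hence $\cF(\dotsc,Qz)\equiv0\mod Q$. In a $u_i$- or $v_i$-slot, since $\cnabla Q = 2X$ and $Xv_i=0$ for $v_i\in\cmE[0]$, one has $\lp\cnabla(Qz),\cnabla v_i\rp = Q\lp\cnabla z,\cnabla v_i\rp$ with $\lp\cnabla z,\cnabla v_i\rp\in\cmE[-4]$, so the argument fed to $\cD$ is divisible by $Q$; tangentiality of $\cD$ then gives $\cX\equiv0\mod Q$, and~\eqref{eqn:divQ} together with $Xw=0$ finishes the slot. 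Thus $\cF$, and hence $\Sym\cF$, is tangential.

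For formal self-adjointness the crucial point is that the single outer divergence restricts to a genuine $M$-divergence. Extending $w\in\cmE[0]$ by $\partial_\rho w=0$, the vector field dual to $\cX\,\cd w$ is purely $M$-directional by~\eqref{eqn:inverse}, so its ambient divergence carries no $\partial_t$ or $\partial_\rho$ derivative and restricts to $\bigl[\cdelta(\cX\,\cd w)\bigr]|_\mG = \delta\bigl(\cX|_\mG\,dw\bigr)$; since $\cD$ is tangential, $\cX|_\mG = D(\lp\nabla u_1,\nabla v_1\rp,\dotsc,\lp\nabla u_\ell,\nabla v_\ell\rp)$. Integrating against $w_0$ and integrating by parts on $M$ gives
\[ \int_M w_0\,F(u_1,\dotsc,u_\ell,v_1,\dotsc,v_\ell,w)\,\dvol = -\int_M D(\lp\nabla u_1,\nabla v_1\rp,\dotsc,\lp\nabla u_\ell,\nabla v_\ell\rp)\,\lp\nabla w_0,\nabla w\rp\,\dvol. \]
Setting $\phi_0:=\lp\nabla w_0,\nabla w\rp$ and $\phi_i:=\lp\nabla u_i,\nabla v_i\rp$, the formal self-adjointness of $D$ makes the right-hand side symmetric in $\phi_0,\dotsc,\phi_\ell$, while each $\phi$ is symmetric in its two arguments; hence this integral depends only on the unordered perfect matching $\{\{w_0,w\},\{u_1,v_1\},\dotsc,\{u_\ell,v_\ell\}\}$ of the $2\ell+2$ functions. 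Averaging over $\Sym$ distributes the remaining $2\ell+1$ functions over the slots, and a direct count shows every perfect matching of the full set $\{w_0,\dotsc,w_{2\ell+1}\}$ occurs with equal multiplicity; the resulting $(2\ell+2)$-linear form is therefore a fixed multiple of the sum over all matchings, which is manifestly symmetric in all $2\ell+2$ arguments. This is precisely formal self-adjointness of $F$.

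I expect the main obstacle to be the restriction identity $\bigl[\cdelta(\cX\,\cd w)\bigr]|_\mG = \delta\bigl(\cX|_\mG\,dw\bigr)$: it is what lets the lone outer divergence be integrated by parts onto $w_0$ and so reduces the whole symmetry question to the self-adjointness of $D$. Once this is in hand, upgrading the wreath-product symmetry of the unsymmetrized form to full symmetry after applying $\Sym$ is routine but should be carried out carefully via the matching count.
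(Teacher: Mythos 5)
Your proposal is correct and follows essentially the same route as the paper: tangentiality is checked slot-by-slot via~\eqref{eqn:divf}, \eqref{eqn:divQ}, the identity $\lp\cnabla(Qz),\cnabla v_i\rp=Q\lp\cnabla z,\cnabla v_i\rp$ and the tangentiality of $\cD$, and formal self-adjointness follows from the restriction identity for the outer divergence (the paper's~\eqref{eqn:tangential_divergence}), one integration by parts, and the symmetry of the Dirichlet form of $D$ combined with the symmetrization count (your matching argument is exactly the paper's passage from a sum over $S_{2\ell+1}$ to one over $S_{2\ell+2}$).
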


\begin{proof}
 Since $\cD$ and the inner product $(u,v)\mapsto\lp\cnabla u,\cnabla v\rp$ are symmetric, it suffices to prove that
 \begin{align}
  \label{eqn:tangential1} \cF(u_1,\dotsc,u_\ell,v_1,\dotsc,v_\ell,Qz) & \equiv 0 \mod Q , \\
  \label{eqn:tangential2} \cF(Qz,u_2,\dotsc,u_\ell,v_1,\dotsc,v_\ell,w) & \equiv 0 \mod Q
 \end{align}
 for any $u_1,\dotsc,u_\ell,v_1,\dotsc,v_\ell,w\in\cmE[0]$ and any $z\in\cmE[-2]$.  On the one hand,
 \begin{align*}
  \cF(u_1,\dotsc,u_\ell,v_1,\dotsc,v_\ell,Qz) & \equiv 2(n+2)\cf w + 2X(\cf z) + 2\cf Xz \mod Q
 \end{align*}
 for $\cf:=\cD(\lp\cnabla u_1,\cnabla v_1\rp,\dotsc,\lp\cnabla u_\ell,\cnabla v_\ell\rp)$.  The facts $Xz=-2z$ and $X(\cf z)=-n\cf z$ imply that~\eqref{eqn:tangential1} holds.  On the other hand, since $v_1\in\cmE[0]$, it holds that $\lp\cnabla (Qz),\cnabla v_1\rp=Q\lp\cnabla z,\cnabla v_1\rp$.  As $\cD$ is tangential on $\bigl(\cmE[-2]\bigr)^\ell$, we conclude that there is a $\cphi\in C^\infty(\cmG)$ such that
 \[ \cD(\lp\cnabla (Qz),\cnabla v_1\rp,\dotsc,\lp\cnabla u_\ell,\cnabla v_\ell\rp) = Q\cphi . \]
 Combining this with the identity
 \[ \cdelta(Q\cphi\,\cd w) \equiv 2\cphi\,Xw \mod Q \]
 yields~\eqref{eqn:tangential2}.  Therefore $\cF$, and hence $\Sym\cF$, is tangential.
 
 We now show that $\Sym\cF$ induces a formally self-adjoint operator on $(M,g)$.  It is straightforward to check using~\eqref{eqn:ambient_metric} that if $\cu:=t^\mu u\in\cmE[0]$ and $\cphi:=t^\nu\phi\in\cmE[2-n]$, where $u=u(x,\rho)$ and $\phi=\phi(x,\rho)$, then
 \begin{equation}
  \label{eqn:tangential_divergence}
  \left.\cdelta\left(\cphi\,\cd\cu\right)\right|_{\rho=0,t=1} = \delta\left(\phi\,du\right) .
 \end{equation}
 Let $D\colon C^\infty(M)^{\ell}\to C^\infty(M)$ be the operator induced by $\cD$.  We conclude from~\eqref{eqn:tangential_divergence} that the operator induced by $\Sym\cF$ is
 \begin{multline*}
  F(u_1,\dotsc,u_{2\ell+1}) = \frac{1}{(2\ell+1)!} \\ \times \sum_{\sigma\in S_{2\ell+1}} \delta\left( D\left(\lp\nabla u_{\sigma(1)},\nabla u_{\sigma(2)}\rp,\dotsc,\lp\nabla u_{\sigma(2\ell-1)},\nabla u_{\sigma(2\ell)}\rp\right)\,du_{\sigma(2\ell+1)} \right) .
 \end{multline*}
 It follows that
 \begin{multline*}
  -(2\ell+1)!\int_M u_0\,F(u_1,\dotsc,u_{2\ell+1}) \\ = \sum_{\sigma\in S_{2\ell+1}} \int_M \lp\nabla u_0,\nabla u_{\sigma(1)}\rp\,D\left(\lp\nabla u_{\sigma(2)},\nabla u_{\sigma(3)}\rp,\dotsc,\lp\nabla u_{\sigma(2\ell)},\nabla u_{\sigma(2\ell+1)}\rp \right) .
 \end{multline*}
 Since $D$ is formally self-adjoint, we conclude that
 \begin{multline*}
  -(2\ell+2)!\int_M u_0\,F(u_1,\dotsc,u_{2\ell+1}) \\ = \sum_{\sigma\in S_{2\ell+2}} \int_M \lp\nabla u_{\sigma(0)},\nabla u_{\sigma(1)}\rp\,D\left(\lp\nabla u_{\sigma(2)},\nabla u_{\sigma(3)}\rp,\dotsc,\lp\nabla u_{\sigma(2\ell)},\nabla u_{\sigma(2\ell+1)}\rp \right) .
 \end{multline*}
 In particular, $F$ is formally self-adjoint.
\end{proof}

We present two applications of Proposition~\ref{prop:reduction}.  The first uses the GJMS operators to construct a suitable operator $\cD$.  This yields the aforementioned family of $j$-differential operators, $j\equiv3\mod 4$.

\begin{cor}
 \label{cor:rank0mod4}
 Let $(M^{2k},g)$ be an even-dimensional Riemannian manifold and let $\ell\in\bN$ be such that $2\ell\leq k$.  Define $\cD\colon\bigl(\cmE[-2]\bigr)^{2\ell-1}\to\cmE[2-2k]$ as the symmetrization of
 \[ (u_1,\dotsc,u_{2\ell-1}) \mapsto u_1\dotsm u_{\ell-1}\cDelta^{k-2\ell}(u_\ell\dotsm u_{2\ell-1}) \]
 and define $\cF\colon\bigl(\cmE[0]\bigr)^{4\ell-1}\to\cmE[-2k]$ by
 \[ \cF(u_1,\dotsc,u_{4\ell-1}) := \cdelta\left(\cD\left(\lp\cnabla u_1,\cnabla u_2\rp,\dotsc,\lp\cnabla u_{4\ell-3},\cnabla u_{4\ell-2}\rp\right)\,\cd u_{4\ell-1}\right) . \]
 Then $\cF$ is tangential and $\Sym\cF$ induces a formally self-adjoint conformally covariant polydifferential operator on $(M,g)$.
\end{cor}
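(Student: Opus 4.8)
The plan is to deduce the corollary directly from Proposition~\ref{prop:reduction}, applied with $n=2k$ and with the role of the proposition's operator $\cD\colon\bigl(\cmE[-2]\bigr)^{\ell}\to\cmE[2-n]$ played by the present $\cD$, so that the proposition's $\ell$ equals the present $2\ell-1$ and the proposition's $\cF$ has $2(2\ell-1)+1=4\ell-1$ arguments, matching the $\cF$ of the corollary. It therefore suffices to verify the two hypotheses of Proposition~\ref{prop:reduction}: that $\cD$ is tangential and that it induces a formally self-adjoint operator on $(M,g)$. The weight bookkeeping is routine: if $u_i\in\cmE[-2]$ then $u_1\cdots u_{\ell-1}\in\cmE[-2(\ell-1)]$ while $u_\ell\cdots u_{2\ell-1}\in\cmE[-2\ell]$, so $\cDelta^{k-2\ell}(u_\ell\cdots u_{2\ell-1})\in\cmE[2\ell-2k]$ and hence $\cD\colon\bigl(\cmE[-2]\bigr)^{2\ell-1}\to\cmE[2-2k]=\cmE[2-n]$, as required. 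The hypothesis $2\ell\leq k$ guarantees both that $k-2\ell\geq0$, so that $\cDelta^{k-2\ell}$ is a genuine nonnegative power of the Laplacian, and that the total order of the ambient operators involved stays within the range in which the ambient metric is determined, so that the construction is insensitive to the $O^+(\rho^{k})$ ambiguity of $\cg$ and $\cF$ is well defined.

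To prove $\cD$ is tangential it suffices, since $\cD$ is a symmetrization, to check that the unsymmetrized operator $\cD_0(u_1,\dotsc,u_{2\ell-1}):=u_1\cdots u_{\ell-1}\cDelta^{k-2\ell}(u_\ell\cdots u_{2\ell-1})$ yields a multiple of $Q$ whenever a single argument is replaced by $Qz$ with $z\in\cmE[-4]$. If the substitution is made in a multiplication slot $1\leq i\leq\ell-1$, the output is visibly divisible by $Q$. If it is made in a Laplacian slot $\ell\leq i\leq 2\ell-1$, then $u_\ell\cdots(Qz)\cdots u_{2\ell-1}=Qw$ with $w\in\cmE[-2\ell-2]$, and the required congruence $\cDelta^{k-2\ell}(Qw)\equiv0\bmod Q$ is exactly the assertion, via Lemma~\ref{lem:tangential} and the commutator identity~\eqref{eqn:DeltakX}, that $(-\cDelta)^{k-2\ell}$ is tangential on $\cmE[-2\ell]$. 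This holds because, with $n=2k$, one has $-2\ell=-\tfrac{n-2(k-2\ell)}{2}$, which is precisely the GJMS weight. Summing over the permutations defining the symmetrization shows $\cD$ is tangential.

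For formal self-adjointness, recall that under the $g$-identifications $\mE[w]\cong_g C^\infty(M)$ products of densities correspond to products of functions, while $\cDelta^{k-2\ell}$ on $\cmE[-2\ell]$ induces, up to sign, the GJMS operator $L_{2(k-2\ell)}$. Hence $\cD$ induces $D=\Sym D_0$ with $D_0(v_1,\dotsc,v_{2\ell-1})=v_1\cdots v_{\ell-1}\,L_{2(k-2\ell)}(v_\ell\cdots v_{2\ell-1})$. Writing $\int_M v_0\,D(v_1,\dotsc,v_{2\ell-1})\,\dvol$ as a sum over $S_{2\ell-1}$, each summand equals $\int_M\bigl(v_0\prod_{i\in S}v_i\bigr)L_{2(k-2\ell)}\bigl(\prod_{j\in T}v_j\bigr)\,\dvol$ for a partition of $\{1,\dotsc,2\ell-1\}$ into $S$ of size $\ell-1$ and $T$ of size $\ell$. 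The formal self-adjointness of $L_{2(k-2\ell)}$ permits interchanging the two factor-groups, so this summand also equals the term attached to the complementary $\ell$-element group of $\{0,\dotsc,2\ell-1\}$. Collecting terms, the functional becomes a fixed combinatorial multiple of $\sum_{\{A,B\}}\int_M\bigl(\prod_A v\bigr)L_{2(k-2\ell)}\bigl(\prod_B v\bigr)\,\dvol$ summed over unordered partitions of $\{0,\dotsc,2\ell-1\}$ into two $\ell$-element sets, which is manifestly symmetric in all $2\ell$ variables. Thus $D$ is formally self-adjoint, and Proposition~\ref{prop:reduction} then gives that $\cF$ is tangential and that $\Sym\cF$ induces a formally self-adjoint conformally covariant polydifferential operator.

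The step I expect to require the most care is the identification of the operator induced by $\cD$ together with the combinatorial self-adjointness argument: one must track the density identifications so that ambient multiplication and $\cDelta^{k-2\ell}$ descend to ordinary multiplication and to the formally self-adjoint GJMS operator $L_{2(k-2\ell)}$, and then organize the symmetrization so that the interchange of the two factor-groups---legitimate only because $L_{2(k-2\ell)}$ is self-adjoint---exhibits full symmetry across all $2\ell$ arguments. By contrast, the tangentiality step reduces cleanly to the GJMS commutator identity~\eqref{eqn:DeltakX}, and the weight and well-definedness conditions are immediate from $2\ell\leq k$.
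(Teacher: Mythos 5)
Your proposal is correct and follows the same route as the paper: both reduce the corollary to Proposition~\ref{prop:reduction} by citing the tangentiality of $\cDelta^{k-2\ell}$ on $\cmE[-2\ell]$ and the formal self-adjointness of the induced GJMS operator $L_{2(k-2\ell)}$. You merely fill in the weight bookkeeping, the slot-by-slot tangentiality check, and the combinatorial symmetry argument that the paper dispatches with ``It follows that,'' and these details are all accurate.
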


\begin{proof}
 In dimension $2k$, the operator $\cDelta^{k-2\ell}\colon\cmE\left[-2\ell\right]\to\cmE\left[2\ell-2k\right]$ is tangential~\cite{GJMS1992} and induces~\cite{FeffermanGraham2013,GrahamZworski2003} a formally self-adjoint operator on $(M,g)$, namely the GJMS operator of order $2k-4\ell$.  It follows that $\cD\colon\bigl(\cmE[-2]\bigr)^{2\ell-1}\to\cmE[2-2k]$ is tangential and induces a formally self-adjoint operator on $(M,g)$.  The final conclusion follows from Proposition~\ref{prop:reduction}.
\end{proof}

\begin{remark}
 We have restricted to the case of critical dimension --- that is, to finding a conformally covariant polydifferential operator $F$ acting on $\mE[0]$ --- in order to indicate the variety of constructions of polydifferential operators while keeping relatively simple formulas.  We expect that Proposition~\ref{prop:reduction} can be extended to general dimensions.  For example, it is straightforward to show that if $(\cmG^{n+2},\cg)$ is the ambient space of an $n$-dimensional Riemannian manifold, then
 \begin{multline}
  \label{eqn:operatorBk}
  \cF(u,v,w) := \cdelta\left(\cDelta^{k-2}(\lp\cnabla u,\cnabla v\rp)\,\cd w\right) \\ - \frac{n-2k}{8(k-1)}\left[ w\cDelta^{k-1}(\lp\cnabla u,\cnabla v\rp) - \cdelta\left(\cDelta^{k-1}(uv)\,\cd w\right) \right]
 \end{multline}
 is tangential on $\cmE\bigl(\bigl[-\frac{n-2k}{4}\bigr]\bigr)^{3}$ for all integers $2\leq k\leq n/2$ (cf.\ Proposition~\ref{prop:operatorB}).  When $n=2k$, this recovers the rank four example of Corollary~\ref{cor:rank0mod4}.  However, generalizing~\eqref{eqn:operatorBk} to higher ranks requires adding additional terms with coefficients factoring through $(n-2k)^j$, $j\geq2$.
\end{remark}

Our second application uses the Ovsienko--Redou operators to satisfy the hypothesis of Proposition~\ref{prop:reduction}.  This yields the aforementioned family of $j$-differential operators, $j\equiv 5\mod 6$.

\begin{cor}
 \label{cor:rank0mod6}
 Let $(M^{2k},g)$ be an even-dimensional Riemannian manifold and let $\ell\in\bN$ be such that $3\ell\leq k$.  Define $\cD\colon\bigl(\cmE[-2]\bigr)^{3\ell-1}\to\cmE[2-2k]$ as the symmetrization of
 \[ (u_1,\dotsc,u_{3\ell-1}) \mapsto u_1\dotsm u_{\ell-1}\cD_{2k-6\ell}(u_\ell\dotsm u_{2\ell-1},u_{2\ell}\dotsm u_{3\ell-1}) , \]
 where $\cD_{2k-6\ell}$ is the operator of Theorem~\ref{thm:ovsienko_redou}.  Define $\cF\colon\bigl(\cmE[0]\bigr)^{6\ell-1}\to\cmE[-2k]$ by
 \[ \cF(u_1,\dotsc,u_{6\ell-1}) := \cdelta\left(\cD\left(\lp\cnabla u_1,\cnabla u_2\rp,\dotsc,\lp\cnabla u_{6\ell-3},\cnabla u_{6\ell-2}\rp\right)\,\cd u_{6\ell-1}\right) . \]
 Then $\cF$ is tangential and $\Sym\cF$ induces a conformally covariant polydifferential operator on $(M,g)$.  Moreover, if either $\ell=\lfloor k/3\rfloor$ or Conjecture~\ref{conj:ovsienko_redou} holds, then the operator induced by $\Sym\cF$ is formally self-adjoint.
\end{cor}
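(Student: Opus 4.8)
The statement is, at its core, an application of Proposition~\ref{prop:reduction} with the role of $\ell$ there played by $L:=3\ell-1$, so the plan is to verify that the operator $\cD$ in the statement satisfies the two hypotheses of that proposition: that it is tangential on $\bigl(\cmE[-2]\bigr)^{3\ell-1}$, and that it induces a formally self-adjoint operator on $(M^{2k},g)$. First I would record the weight bookkeeping. Since each argument lies in $\cmE[-2]$, the products $u_\ell\dotsm u_{2\ell-1}$ and $u_{2\ell}\dotsm u_{3\ell-1}$ lie in $\cmE[-2\ell]$, which is exactly the domain weight of $\cD_{2k-6\ell}$ in dimension $n=2k$: the parameter here is $k'=k-3\ell$, so by Theorem~\ref{thm:ovsienko_redou} the input weight is $-\frac{n-2k'}{3}=-2\ell$ and the output weight is $-\frac{2n+2k'}{3}=-(2k-2\ell)$. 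Multiplying by $u_1\dotsm u_{\ell-1}\in\cmE[-2(\ell-1)]$ then lands in $\cmE[2-2k]$, confirming $\cD\colon\bigl(\cmE[-2]\bigr)^{3\ell-1}\to\cmE[2-2k]$ as required.

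Next I would show $\cD$ is tangential. By Lemma~\ref{lem:tangential_polydifferential_Q} it suffices to check that replacing any one argument by $Qz$ produces something $\equiv0\bmod Q$, and since $\cD$ is a symmetrization it is enough to treat one representative from each of the three blocks. If the altered argument is among the leading factors $u_1,\dotsc,u_{\ell-1}$, the output manifestly carries a factor of $Q$. If it is among the factors feeding either slot of $\cD_{2k-6\ell}$, then that slot becomes $Q$ times a density of the correct weight, and the tangentiality of $\cD_{2k-6\ell}$ on $\bigl(\cmE[-2\ell]\bigr)^2$ — established unconditionally in Theorem~\ref{thm:ovsienko_redou} — gives the conclusion after multiplying by the harmless leading factors. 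Thus $\cD$ is tangential independently of Conjecture~\ref{conj:ovsienko_redou}.

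With $\cD$ tangential, the tangentiality of $\cF$ and the conformal covariance of the operator induced by $\Sym\cF$ are unconditional: the tangentiality half of the proof of Proposition~\ref{prop:reduction} uses only that $\cD$ is tangential and symmetric, and Lemma~\ref{lem:tangential_polydifferential} then yields a conformally covariant operator. Formal self-adjointness is the only place the hypothesis on $\cD_{2k-6\ell}$ enters. Granting that the induced Ovsienko--Redou operator $D_{2k-6\ell}$ is formally self-adjoint, I would deduce that $\cD$ induces a formally self-adjoint operator on $M$ and then invoke the second half of Proposition~\ref{prop:reduction}. This self-adjointness of $D_{2k-6\ell}$ holds under Conjecture~\ref{conj:ovsienko_redou}; in the special case $\ell=\lfloor k/3\rfloor$ one has $k'=k-3\ell=k\bmod 3\in\{0,1,2\}$, so the relevant operator is $D_0$, $D_2$, or $D_4$. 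The operator $D_0$ is multiplication (here $a_{0,0,0}=1$) and is trivially formally self-adjoint, while $D_2$ and $D_4$ are formally self-adjoint by the explicit formulas of Theorem~\ref{thm:eval_ovsienko_redou}, whose dimensional hypotheses are met since $n=2k$ is large in each of these cases.

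The main obstacle is the transfer of formal self-adjointness from $D_{2k-6\ell}$ to the induced operator $D$ of $\cD$. Because $D=\Sym T$ with $T(u_1,\dotsc,u_{3\ell-1})=u_1\dotsm u_{\ell-1}\,D_{2k-6\ell}(u_\ell\dotsm u_{2\ell-1},u_{2\ell}\dotsm u_{3\ell-1})$, the form $\int_M u_0\,D(u_1,\dotsc,u_{3\ell-1})\,\dvol$ is already symmetric in $u_1,\dotsc,u_{3\ell-1}$, so it remains only to verify invariance under the transposition $u_0\leftrightarrow u_1$. Writing $\int_M u_0\,T(u_1,\dotsc,u_{3\ell-1})\,\dvol=\int_M (u_0u_1\dotsm u_{\ell-1})\,D_{2k-6\ell}(u_\ell\dotsm u_{2\ell-1},u_{2\ell}\dotsm u_{3\ell-1})\,\dvol$ and applying formal self-adjointness of $D_{2k-6\ell}$ shows this form is symmetric under permuting the three blocks $\{u_0,\dotsc,u_{\ell-1}\}$, $\{u_\ell,\dotsc,u_{2\ell-1}\}$, $\{u_{2\ell},\dotsc,u_{3\ell-1}\}$ as wholes, and within each block by commutativity. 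Averaging $T$ over $S_{3\ell-1}$ then upgrades this block symmetry to full $S_{3\ell}$ symmetry, exactly as the passage from $S_{2\ell+1}$ to $S_{2\ell+2}$ symmetry is carried out in the proof of Proposition~\ref{prop:reduction}. This combinatorial upgrade, together with the check that the weight of $\dvol$ makes each integrand conformally invariant, is the only non-formal step; everything else is weight counting and appeals to results already in hand.
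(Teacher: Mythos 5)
Your proposal is correct and follows the same route as the paper: verify the hypotheses of Proposition~\ref{prop:reduction} for $\cD$, using the tangentiality from Theorem~\ref{thm:ovsienko_redou} and, for formal self-adjointness, either Conjecture~\ref{conj:ovsienko_redou} or (when $\ell=\lfloor k/3\rfloor$, so $k-3\ell<3$) the explicit low-order cases of Theorem~\ref{thm:eval_ovsienko_redou}. You fill in two steps the paper leaves implicit --- the weight bookkeeping and the block-symmetry argument transferring formal self-adjointness from $D_{2k-6\ell}$ to the composite operator induced by $\cD$ --- and both are carried out correctly.
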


\begin{proof}
 In dimension $2k$, Theorem~\ref{thm:ovsienko_redou} implies that $\cD_{2k-6\ell}\colon\cmE[-2\ell]\to\cmE[2\ell-2k]$ is tangential.  We conclude from the proof of Proposition~\ref{prop:reduction} that both $\cF$ and $\Sym\cF$ are tangential.
 
 If $\ell=\lfloor k/3\rfloor$, then $k-3\ell<3$, and hence Theorem~\ref{thm:eval_ovsienko_redou} implies that the operator induced by $\cD$ is formally self-adjoint.  If instead Conjecture~\ref{conj:ovsienko_redou} holds, then the operator induced by $\cD$ is formally self-adjoint.  Thus in either case the operator induced by $\cD$ is formally self-adjoint, and hence Proposition~\ref{prop:reduction} also implies that the operator induced by $\Sym\cF$ is formally self-adjoint.
\end{proof}

% Acknowledgments
\subsection*{Acknowledgments}
JSC was supported by the Simons Foundation (Grant \# 524601).  WY was supported by NSFC (Grant No.\ 12071489, No.\ 12025109).  Part of this work was carried out while YL was employed at Princeton University.
%JSC was supported by the Simons Foundation (Grant \# 524601).  WY was supported by NSFC (Grant No.\ 11521101, No.\ 11601531).  Part of this work was carried out while YL was employed at Princeton University.

\bibliographystyle{abbrv}
\bibliography{bib}

\newcommand{\noopsort}[1]{}
\begin{thebibliography}{10}

\bibitem{Beckner1993}
W.~Beckner.
\newblock Sharp {S}obolev inequalities on the sphere and the {M}oser-{T}rudinger inequality.
\newblock {\em Ann. of Math. (2)}, 138(1):213--242, 1993.

\bibitem{Branson1995}
T.~P. Branson.
\newblock Sharp inequalities, the functional determinant, and the complementary series.
\newblock {\em Trans. Amer. Math. Soc.}, 347(10):3671--3742, 1995.

\bibitem{BransonGover2008}
T.~P. Branson and A.~R. Gover.
\newblock Variational status of a class of fully nonlinear curvature prescription problems.
\newblock {\em Calc. Var. Partial Differential Equations}, 32(2):253--262, 2008.

\bibitem{BrendleViaclovsky2004}
S.~Brendle and J.~A. Viaclovsky.
\newblock A variational characterization for {$\sigma_{n/2}$}.
\newblock {\em Calc. Var. Partial Differential Equations}, 20(4):399--402, 2004.

\bibitem{Case2019fl}
J.~S. Case.
\newblock The {F}rank-{L}ieb approach to sharp {S}obolev inequalities.
\newblock {\em Commun. Contemp. Math.}, 23(3):Paper No. 2050015, 16, 2021.

\bibitem{CaseLinYuan2016}
J.~S. Case, Y.-J. Lin, and W.~Yuan.
\newblock Conformally variational {R}iemannian invariants.
\newblock {\em Trans. Amer. Math. Soc.}, 371(11):8217--8254, 2019.

\bibitem{CaseWang2016s}
J.~S. Case and Y.~Wang.
\newblock Boundary operators associated to the {$\sigma_k$}-curvature.
\newblock {\em Adv. Math.}, 337:83--106, 2018.

\bibitem{CaseWang2019}
J.~S. Case and Y.~Wang.
\newblock Towards a fully nonlinear sharp {S}obolev trace inequality.
\newblock {\em J. Math. Study}, 53(4):402--435, 2020.

\bibitem{CaseYang2012}
J.~S. Case and P.~Yang.
\newblock A {P}aneitz-type operator for {CR} pluriharmonic functions.
\newblock {\em Bull. Inst. Math. Acad. Sin. (N.S.)}, 8(3):285--322, 2013.

\bibitem{ChangFang2008}
S.-Y.~A. Chang and H.~Fang.
\newblock A class of variational functionals in conformal geometry.
\newblock {\em Int. Math. Res. Not. IMRN}, (7):Art. ID rnn008, 16pp., 2008.

\bibitem{ChangFangGraham2012}
S.-Y.~A. Chang, H.~Fang, and C.~R. Graham.
\newblock A note on renormalized volume functionals.
\newblock {\em Differential Geom. Appl.}, 33(suppl.):246--258, 2014.

\bibitem{ChangGurskyYang2003b}
S.-Y.~A. Chang, M.~J. Gursky, and P.~C. Yang.
\newblock Entire solutions of a fully nonlinear equation.
\newblock In {\em Lectures on partial differential equations}, volume~2 of {\em New Stud. Adv. Math.}, pages 43--60. Int. Press, Somerville, MA, 2003.

\bibitem{ChangYang1995}
S.-Y.~A. Chang and P.~C. Yang.
\newblock Extremal metrics of zeta function determinants on {$4$}-manifolds.
\newblock {\em Ann. of Math. (2)}, 142(1):171--212, 1995.

\bibitem{ChangYang2003}
S.-Y.~A. Chang and P.~C. Yang.
\newblock The inequality of {M}oser and {T}rudinger and applications to conformal geometry.
\newblock volume~56, pages 1135--1150. 2003.
\newblock Dedicated to the memory of J\"{u}rgen K. Moser.

\bibitem{Clerc2016}
J.-L. Clerc.
\newblock Singular conformally invariant trilinear forms, {I}: {T}he multiplicity one theorem.
\newblock {\em Transform. Groups}, 21(3):619--652, 2016.

\bibitem{Clerc2017}
J.-L. Clerc.
\newblock Singular conformally invariant trilinear forms, {II}: {T}he higher multiplicity case.
\newblock {\em Transform. Groups}, 22(3):651--706, 2017.

\bibitem{Cohen1975}
H.~Cohen.
\newblock Sums involving the values at negative integers of {$L$}-functions of quadratic characters.
\newblock {\em Math. Ann.}, 217(3):271--285, 1975.

\bibitem{FeffermanGraham2002}
C.~Fefferman and C.~R. Graham.
\newblock {$Q$}-curvature and {P}oincar\'e metrics.
\newblock {\em Math. Res. Lett.}, 9(2-3):139--151, 2002.

\bibitem{FeffermanGraham2012}
C.~Fefferman and C.~R. Graham.
\newblock {\em The ambient metric}, volume 178 of {\em Annals of Mathematics Studies}.
\newblock Princeton University Press, Princeton, NJ, 2012.

\bibitem{FeffermanGraham2013}
C.~Fefferman and C.~R. Graham.
\newblock Juhl's formulae for {GJMS} operators and {$Q$}-curvatures.
\newblock {\em J. Amer. Math. Soc.}, 26(4):1191--1207, 2013.

\bibitem{GoverOrsted2010}
A.~R. Gover and B.~{\O}rsted.
\newblock Universal principles for {K}azdan-{W}arner and {P}ohozaev-{S}choen type identities.
\newblock {\em Commun. Contemp. Math.}, 15(4):1350002, 27, 2013.

\bibitem{Graham2000}
C.~R. Graham.
\newblock Volume and area renormalizations for conformally compact {E}instein metrics.
\newblock In {\em The {P}roceedings of the 19th {W}inter {S}chool ``{G}eometry and {P}hysics'' ({S}rn\'\i, 1999)}, number~63, pages 31--42, 2000.

\bibitem{Graham2009}
C.~R. Graham.
\newblock Extended obstruction tensors and renormalized volume coefficients.
\newblock {\em Adv. Math.}, 220(6):1956--1985, 2009.

\bibitem{GJMS1992}
C.~R. Graham, R.~Jenne, L.~J. Mason, and G.~A.~J. Sparling.
\newblock Conformally invariant powers of the {L}aplacian. {I}. {E}xistence.
\newblock {\em J. London Math. Soc. (2)}, 46(3):557--565, 1992.

\bibitem{GrahamZworski2003}
C.~R. Graham and M.~Zworski.
\newblock Scattering matrix in conformal geometry.
\newblock {\em Invent. Math.}, 152(1):89--118, 2003.

\bibitem{GuanWang2004}
P.~Guan and G.~Wang.
\newblock Geometric inequalities on locally conformally flat manifolds.
\newblock {\em Duke Math. J.}, 124(1):177--212, 2004.

\bibitem{GurskyHangLin2016}
M.~J. Gursky, F.~Hang, and Y.-J. Lin.
\newblock Riemannian manifolds with positive {Y}amabe invariant and {P}aneitz operator.
\newblock {\em Int. Math. Res. Not. IMRN}, (5):1348--1367, 2016.

\bibitem{Hirachi2013}
K.~Hirachi.
\newblock {$Q$}-prime curvature on {CR} manifolds.
\newblock {\em Differential Geom. Appl.}, 33(suppl.):213--245, 2014.

\bibitem{Juhl2013}
A.~Juhl.
\newblock Explicit formulas for {GJMS}-operators and {$Q$}-curvatures.
\newblock {\em Geom. Funct. Anal.}, 23(4):1278--1370, 2013.

\bibitem{LeeParker1987}
J.~M. Lee and T.~H. Parker.
\newblock The {Y}amabe problem.
\newblock {\em Bull. Amer. Math. Soc. (N.S.)}, 17(1):37--91, 1987.

\bibitem{LiLi2003}
A.~Li and Y.~Li.
\newblock On some conformally invariant fully nonlinear equations.
\newblock {\em Comm. Pure Appl. Math.}, 56(10):1416--1464, 2003.

\bibitem{Mabuchi1986}
T.~Mabuchi.
\newblock {$K$}-energy maps integrating {F}utaki invariants.
\newblock {\em Tohoku Math. J. (2)}, 38(4):575--593, 1986.

\bibitem{OvsienkoRedou2003}
V.~Ovsienko and P.~Redou.
\newblock Generalized transvectants-{R}ankin-{C}ohen brackets.
\newblock {\em Lett. Math. Phys.}, 63(1):19--28, 2003.

\bibitem{Rankin1956}
R.~A. Rankin.
\newblock The construction of automorphic forms from the derivatives of a given form.
\newblock {\em J. Indian Math. Soc. (N.S.)}, 20:103--116, 1956.

\bibitem{Reilly1973}
R.~C. Reilly.
\newblock On the {H}essian of a function and the curvatures of its graph.
\newblock {\em Michigan Math. J.}, 20:373--383, 1973.

\bibitem{ShengTrudingerWang2007}
W.-M. Sheng, N.~S. Trudinger, and X.-J. Wang.
\newblock The {Y}amabe problem for higher order curvatures.
\newblock {\em J. Differential Geom.}, 77(3):515--553, 2007.

\bibitem{Viaclovsky2000}
J.~A. Viaclovsky.
\newblock Conformal geometry, contact geometry, and the calculus of variations.
\newblock {\em Duke Math. J.}, 101(2):283--316, 2000.

\end{thebibliography}
\end{document}